\newtheorem{theorem}{Theorem}[section]
\newtheorem{lemma}[theorem]{Lemma}
\newtheorem{definition}[theorem]{Definition}
\theoremstyle{remark}
\newtheorem{remark}[theorem]{\bf Remark}
\newtheorem{example}[theorem]{\bf Example}
\renewcommand{\leq}{\leqslant}
\renewcommand{\geq}{\geqslant}
\newcommand{\ptl}{\partial}
\newcommand{\rr}{\mathbb{R}}
\newcommand{\nn}{\mathbb{N}}
\numberwithin{equation}{section}
\begin{document}

\title[Optimal divisions of a convex body]
{Optimal divisions of a convex body}

\author[A. Ca\~nete]{Antonio Ca\~nete}
\address{Departamento de Matem\'atica Aplicada I and IMUS \\ Universidad de Sevilla}
\email{antonioc@us.es}

\author[I. Fern\'andez]{Isabel Fern\'andez}
\email{isafer@us.es}

\author[A. M\'arquez]{Alberto M\'arquez}
\email{almar@us.es}

\begin{abstract}
For a convex body $C$ in $\mathbb{R}^d$ and a division of $C$ into  convex subsets $C_1,\ldots,C_n$, we can consider
$max\{F(C_1),\ldots, F(C_n)\}$ (respectively, $min\{F(C_1),\ldots, F(C_n)\}$),
where $F$ represents one of these classical geometric magnitudes:  the diameter, the minimal width, or the inradius.
In this work we study the  divisions of $C$  minimizing (respectively, maximizing) the previous value, as well as other related questions.
\end{abstract}


\maketitle

\section{Introduction}

Finding \emph{the best} division of a given set, from a geometric point of view, is an interesting non-trivial question deeply studied in different settings,  specially in the last decades, which may yield striking results in some situations.

In this line, \emph{Conway's fried potato problem} (\cite[Problem~C1]{cfg}) looks for the division of a convex body $C$ in $\rr^d$ into $n$ subsets (under the additional restriction of using $n-1$ successive hyperplane cuts) minimizing the largest inradius of the subsets. This problem was solved by A.~Bezdek and K.~Bezdek in 1995, proving that a minimizing division is given by means of $n-1$ parallel hyperplane cuts, equally spaced in the slab determining the minimal width of a certain rounded body associated to $C$. We note that this construction is implicit, and the \emph{optimal value} associated to this problem is determined  theoretically~\cite[Th.~1]{bb}.

A similar question for the diameter magnitude has been also considered in the planar setting in several joint works by one of the authors:
for the family of centrally-symmetric planar convex bodies and arbitrary divisions into two subsets, necessary and sufficient conditions for being a minimizing division can be found in~\cite{CS} (see also~\cite{MPS}). Moreover, for a $k$-rotationally symmetric planar convex body $C$, where $k\in\mathbb{N}$,  $k\geq3$, a minimizing $k$-partition (which is a particular type of division into $k$ subsets, by means of $k$ curves which meet at an interior point of $C$) is described in~\cite[Th.~4.5]{extending} for any $k\geq 3$, as well as a minimizing general division (without  restrictions) into $k$ subsets when $k\leq6$~\cite[Th.~4.6]{extending}.
Additionally, a related approach for general planar convex bodies has been treated in~\cite{CG}.

These two previous questions can be regarded as particular cases of the following {\em min-Max} and {\em Max-min} type problems:

{\bf Problem:} {\em Given a geometric magnitude $F$ and a convex body $C\subset\rr^d$, which are the divisions of $C$, if any, minimizing (resp., maximizing)  the largest (resp., the smallest) value of $F$ on the subsets of the division?}

The present work is devoted to study  this problem when $F$ is the diameter, the minimal width and the  inradius. 
Following the original statement of Conway's fried potato problem, we will consider divisions determined by  \emph{successive hyperplane cuts} (see~\cite[\S.~2]{bb} for a more precise description). 
We will address the question of existence 
of an \emph{optimal division}, as well as its {\em balancing behaviour} (in the sense that all the subsets in those divisions have the same value for the considered magnitude, see Section~\ref{sec:pre}). We will also give the optimal value of the magnitude $F$ when possible, or  upper and lower bounds when not. Moreover, for the family of convex polygons, we will provide an algorithm for computing the optimal value (and consequently, an  optimal division) for the min-Max problem for the inradius (Conway's fried potato problem), for which the solution was known only theoretically, as explained above. This algorithm is of quadratic order with respect to the number of sides of the polygon, see Subsection~\ref{subsec:mMi-algorithm}.

Our main results can be summarized as follows:

{\bf Theorem A (min-Max type problems):}  {\em Let $C$ be a convex body in $ \rr^d$, $F$ one of the following magnitudes: diameter ($D$), width ($w$), inradius ($I$),  and $n\geq 2$.
Then, there exists a division of $C$ into $n$ subsets (given by $n-1$ sucessive hyperplane cuts) minimizing the largest value of $F$ on the subsets. This optimal division can be chosen to be {\em balanced}. Moreover,
\begin{enumerate}
\item If $F=D$,  lower and upper bounds for the optimal value are given in~\eqref{eq:mMDbound2}.
\item If $F=w$, any optimal division is balanced and the optimal value is $w(C)/n$.
\item If $F=I$, the optimal value is given in terms of the width of some {\em rounded body} associated to $C$ (\cite[Th.~1]{bb}), although an explicit sharp lower bound is given in~\eqref{eq:desi}.
\end{enumerate}

In the last two cases, optimal divisions for convex polygons can be found by means of algorithms of lineal and quadratic order with respect to the number of sides of the polygon,  respectively.}


{\bf Theorem B (Max-min type problems):} {\em Let $C$ be a convex body in $\rr^d$, $F$ one of the following magnitudes: diameter ($D$), width ($w$), inradius ($I$),  and $n\geq 2$.
Then, there exists a division of $C$ into $n$ subsets (given by $n-1$ successive hyperplane cuts) maximizing the smallest value of $F$ on the subsets (except possibly when $d=2$ and $F=D$
). This optimal division can be chosen to be \emph{balanced}.  Moreover,
\begin{enumerate}
\item If $F=D$,  any optimal division is balanced and the optimal value is $D(C)$.
\item If $F=w$, sharp lower and upper bounds for the optimal value are given in~\eqref{eq:Mmwbound1}.
\item If $F=I$, a lower bound for the optimal value is given in~\eqref{eq:Mmibound}.
\end{enumerate}
}
For the Max-min type problems, we also remark that  any optimal division when $F=w$ is balanced for $n=2$, and that the optimal value when $F=I$ can be expressed in an analogous way as in~\cite{bb} (that is, in terms of the optimal value for the Max-min problem for the width for a certain rounded body associated to $C$, see Theorem~\ref{prop:Mmiov}).


The paper is organized as follows.
Section~\ref{sec:pre} establishes the precise statement of our problems. 
In Section~\ref{sec:mM} we consider the min-Max type problems, 
proving the results in Theorem A, whereas
Section~\ref{sec:Mm} is devoted to the corresponding Max-min type problems, proving  Theorem B. Finally, Section~\ref{sec:final} contains some related questions.


\section{General setting and preliminaries}
\label{sec:pre}

From now on, $C$ will denote a convex body (convex compact set with non-empty interior) in $\rr^d$, $d\geq 2$.

Following ~\cite{bb} (see also ~\cite[Subsec.~2.2]{RS}), an \emph{$n$-division} of $C$ will be a decomposition of $C$ into $n$ closed subsets $C_1,\ldots, C_n$, all of them with non-empty interior, 
given by $n-1$ \emph{successive hyperplane cuts}: along the division process, only one subset is divided into two by each hyperplane cut (see Figure~\ref{fig:successive}). In particular, all the subsets of an $n$-division are convex, and  the intersection between two adjacent subsets is always a piece of hyperplane.

\begin{figure}[ht]
    \centering
    \includegraphics[width=0.75\textwidth]{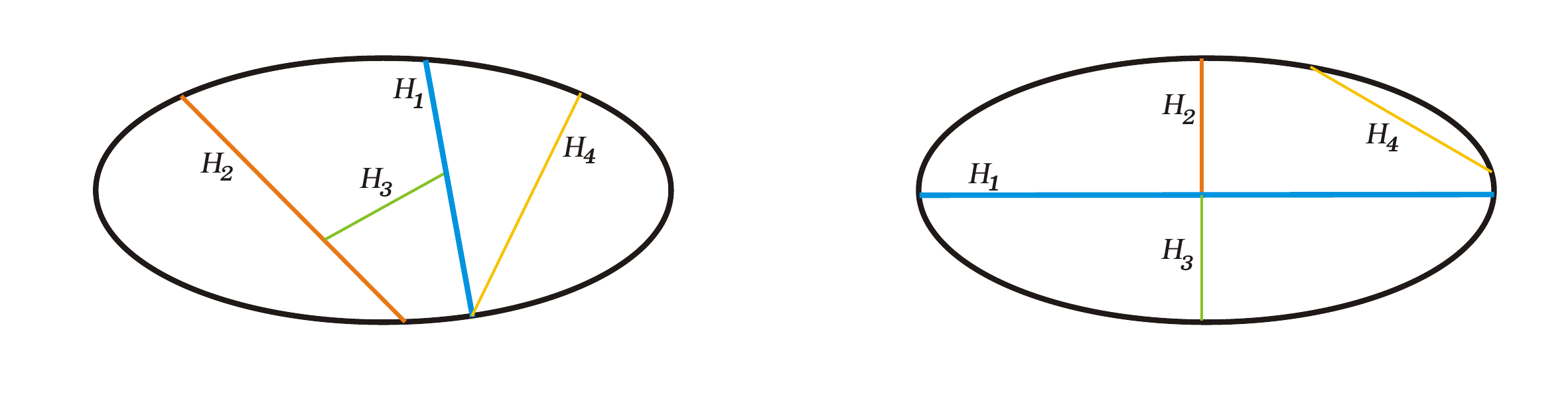}
    \caption{Two 5-divisions of an ellipse, provided by four hiperplane cuts. 
    }
    \label{fig:successive}
\end{figure}

\begin{remark} 
\label{re:dos}
Note that at least the first  cut in an $n$-division always divides $C$ into two convex regions, which will not necessarily be subsets of the resulting $n$-division (see Figure ~\ref{fig:successive}).
\end{remark}


Let $F$ denote one of these three classical geometric magnitudes, defined for any compact set in $\rr^d$:
\begin{itemize}
    \item[-] the diameter $D$, which is the largest distance between two points in the set,
    \item[-] the (minimal) width $w$, which is the smallest distance between two parallel supporting hyperplanes of the set, and
    \item[-] the inradius $I$, which is the largest radius of a ball entirely contained in the set.
\end{itemize}


Associated to the magnitude $F$, we consider the following \emph{min-Max type problem}: determine the $n$-divisions $P$ of $C$ that provide the minimal possible value for
$$F(P):=\max\{F(C_1),\ldots,F(C_n)\}, $$
 where $C_1,\ldots,C_n$ are  the subsets given by $P$, as well as finding that value:
\begin{equation}
\label{eq:infimo}
F_n(C)=\inf\{F(P):\ P\ \text{is an}\ n\text{-division of}\ C\}.
\end{equation}

The dual {\em Max-min type problem} seeks for the $n$-divisions $P$ of $C$ for which
$$\widetilde{F}(P):=\min\{F(C_1),\ldots,F(C_n)\}$$
agrees with
\begin{equation}
\label{eq:supremo}
\widetilde{F}_n(C)=\sup\{\widetilde{F}(P):\ P\ \text{is an}\ n\text{-division of}\ C\}.
\end{equation}

Any $n$-division $P$ of $C$ satisfying that $F(P)=F_n(C)$ or $\widetilde{F}(P)=\widetilde{F}_n(C)$ will be called an \emph{optimal $n$-division} of $C$, 
and the values $F_n(C)$ and $\widetilde{F}_n(C)$
will be referred to as the \emph{optimal values} of the considered problem.  Additionally, we will say that an $n$-division of $C$ into subsets $C_1,\ldots,C_n$ is \emph{balanced}  if  $F(C_1)=\ldots=F(C_n)$.

The following inequalities are almost straightforward from the previous  definitions:


\begin{lemma}
\label{le:cotatrivial}
Let $C$ be a convex body in $\rr^d$. Then, $0< F_n(C)\leq F_m(C)\leq  F(C)$ and $0< \widetilde{F}_n(C)\leq F(C)$, for any  $n\geq m\geq 2$.
\end{lemma}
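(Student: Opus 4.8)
The statement collects four inequalities; I would handle them one at a time, all by elementary monotonicity arguments. \emph{Positivity:} Every subset $C_i$ of an $n$-division has non-empty interior, so it contains a ball of positive radius; hence $F(C_i)>0$ for $F\in\{D,w,I\}$ (the diameter and width of a set with interior are positive, and the inradius is positive by definition). Consequently $F(P)=\max_i F(C_i)>0$ and $\widetilde F(P)=\min_i F(C_i)>0$ for every $n$-division $P$, so taking infimum/supremum over $n$-divisions gives $F_n(C)>0$ and $\widetilde F_n(C)>0$. (Strictly, one should note the infimum is attained or at least bounded below by the value on any fixed division, but positivity of $F_n(C)$ needs the reverse: I would instead observe $F_n(C)\ge \widetilde F_n(C)$ is false in general, so I argue positivity of $F_n(C)$ directly from the existence part of Theorem~A — or, to stay self-contained here, note that $F(C_i)\ge I(C_i)\ge$ some uniform positive lower bound is \emph{not} automatic, so the cleanest route is to defer strict positivity of $F_n$ to the existence of an optimal division. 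For the purposes of this lemma I would simply remark $F_n(C)\ge 0$ and $\widetilde F_n(C)\ge 0$ trivially, and $>0$ once an optimal division is known to exist.)

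\emph{Upper bounds by $F(C)$:} For any $n$-division $P$ with subsets $C_1,\dots,C_n$, each $C_i\subseteq C$, and all three magnitudes $D,w,I$ are monotone under inclusion of convex bodies: $D$ obviously, $I$ obviously, and $w$ because a supporting slab of $C$ of width $w(C)$ contains $C_i$, so $w(C_i)\le w(C)$. Hence $F(C_i)\le F(C)$ for all $i$, giving $F(P)\le F(C)$ and $\widetilde F(P)\le F(C)$; passing to the infimum resp. supremum yields $F_n(C)\le F(C)$ and $\widetilde F_n(C)\le F(C)$.

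\emph{Monotonicity $F_n(C)\le F_m(C)$ for $n\ge m$:} Given any $m$-division $P$ of $C$, I can refine it to an $n$-division $P'$ by performing $n-m$ further successive hyperplane cuts on subsets of $P$. Each new cut splits a subset into two convex pieces, each contained in the original, so by the inclusion-monotonicity above, $F$ does not increase on any piece; thus $F(P')\le F(P)$. Taking the infimum over all $m$-divisions $P$ gives $F_n(C)\le F_m(C)$. The only point needing a word is that the refined decomposition is still a legitimate $n$-division in the sense of Section~\ref{sec:pre}, i.e. obtained by $n-1$ successive hyperplane cuts — this is immediate since we appended cuts one at a time, each splitting a single current subset. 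The main (mild) obstacle in the whole lemma is precisely this bookkeeping of ``successive'' cuts, together with the observation that strict positivity of $F_n(C)$ is not purely formal and is best justified via the attainment result proved later; everything else is routine inclusion-monotonicity of $D$, $w$, $I$.
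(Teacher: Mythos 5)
Your handling of the upper bounds ($F_m(C)\leq F(C)$, $\widetilde F_n(C)\leq F(C)$), of the monotonicity $F_n(C)\leq F_m(C)$ by refining an $m$-division with further successive cuts, and of the positivity of $\widetilde F_n(C)$ (a supremum, so one good division suffices) all match the paper's argument and are correct. The genuine gap is exactly the point you flag and then decline to resolve: the strict positivity of $F_n(C)$. Since $F_n(C)$ is an \emph{infimum}, you must rule out a sequence of $n$-divisions whose maximal $F$-value tends to $0$, and your proposal leaves this to ``the existence of an optimal division,'' a result that appears only later (and separately for each magnitude). As written, your proof of the lemma is therefore incomplete.

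The paper closes this gap with a short, self-contained compactness argument that you should supply: if $F_n(C)=0$, take a minimizing sequence of $n$-divisions and apply the Blaschke selection theorem \cite[Th.~1.8.7]{schneider} to each of the $n$ families of subsets. One obtains closed sets $E_1,\ldots,E_n$ with $C=E_1\cup\ldots\cup E_n$ and $F(E_i)=0$ for all $i$; for each of the three magnitudes, $F(E)=0$ forces $E$ to have empty interior (a set of diameter $0$ is a point, a set of width $0$ lies in a hyperplane, and a set of inradius $0$ contains no ball), so $C$ would be covered by finitely many closed sets with empty interior, contradicting that $C$ has non-empty interior. This is both shorter and logically cleaner than deferring to the later existence theorems, which would leave the preliminaries section depending on results proved afterwards; note in particular that the existence proofs in Section 4 themselves invoke this lemma's positivity statement, so your proposed ordering risks an awkward (even if, for the min-Max case, not strictly circular) dependency.
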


\begin{proof}
The second chain of inequalities is trivial. For the first one, notice that if $F_n(C)=0$,  using Blaschke selection theorem~\cite[Th.~1.8.7]{schneider} we can find closed subsets $E_1,\ldots,E_n$ of $C$ with $F(E_i)=0$ (and therefore having empty interior), such that $C=E_1\cup\ldots\cup E_n$,   which yields a contradiction, since $C$ has non-empty interior. 
Finally, for $m\leq n$, let $P_m$ be an arbitrary $m$-division of $C$ with subsets $C_1,\ldots,C_m$. Without loss of generality, we can assume that $F(P_m)=F(C_1)$. By dividing the subset $C_m$ into $n-m+1$ subsets by succesive hyperplane cuts, we will obtain an $n$-division $P_n$ of $C$ with subsets $C_1,\ldots, C_{m-1}, C_m',\ldots, C_n'$, for which  $F(P_m)=F(P_n)\geq F_n(C)$, and therefore $F_m(C)\geq F_n(C)$.
\end{proof}

\section{min-Max type problems}
\label{sec:mM}
In this section we shall treat the 
min-Max type problems for the diameter, the width and the inradius.
Recall that the optimal value for each of these problems will be denoted by $F_n(C)$, where $F$ stands for the considered magnitude.

\subsection{min-Max problem for the diameter}
For this problem, the precise computation of the corresponding optimal value is, in general, a complicated task, and lower and upper bounds will be provided in  Theorem~\ref{prop:mMDbound2}. Moreover, we will see that the existence of balanced optimal divisions is always assured (see Theorem~\ref{prop:mMDexistence}),
but not all optimal divisions are balanced, as shown in Example~\ref{ex:mMDbalanced}.


\begin{theorem}
\label{prop:mMDexistence}
Let $C$ be a convex body in $\rr^d$. Then, there exists a balanced  optimal $n$-division of $C$ for the min-Max problem for the diameter.
\end{theorem}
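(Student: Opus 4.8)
The plan is to prove existence of an optimal $n$-division via a compactness argument, and then show that any optimal division can be ``re-balanced'' into a balanced one of the same value.

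\medskip

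\textbf{Step 1: Compactness and existence.} First I would parametrize the space of $n$-divisions of $C$ by successive hyperplane cuts. An $n$-division is determined by the sequence of $n-1$ cuts, each cut being specified by a hyperplane (a point in $\sph^{d-1}\times\rr$, or better, the affine hyperplane meeting $C$) together with a bookkeeping choice of which current piece it splits. The ``which piece'' data ranges over a finite set at each stage, so the configuration space is a finite union of pieces, each of which is a compact subset of $(\sph^{d-1}\times\rr)^{n-1}$ once we restrict to hyperplanes meeting $\inte C$ and allow degenerate (tangent/empty) cuts in the closure. The key observations are: (a) the map sending a cutting configuration to the tuple of closed convex subsets $(C_1,\ldots,C_n)$ is continuous with respect to the Hausdorff metric, where we allow limit configurations in which some "subsets'' degenerate to sets with empty interior; (b) the diameter $D$ is continuous under Hausdorff convergence; hence $F(P)=\max_i D(C_i)$ is a continuous function on this compact configuration space (extended to the closure). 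Therefore the infimum $F_n(C)$ in \eqref{eq:infimo} is attained by some configuration $P_0$. The only thing to rule out is that $P_0$ is a degenerate configuration whose "subsets'' do not all have non-empty interior: but Lemma~\ref{le:cotatrivial} gives $F_n(C)>0$, and if some limiting piece had empty interior one can argue (as in the proof of that lemma, via Blaschke selection) that the degenerate pieces could be absorbed into neighbours, producing a genuine $n$-division with value $\le F_n(C)$; alternatively, perturb the cuts slightly to restore non-empty interiors while changing $F(P)$ by an arbitrarily small amount, which by minimality forces $F(P_0)=F_n(C)$ to be realized by an honest $n$-division. This gives an optimal $n$-division.

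\medskip

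\textbf{Step 2: Balancing.} Given any optimal $n$-division $P$ with subsets $C_1,\ldots,C_n$ and $F(P)=F_n(C)=:\delta$, I want to produce a balanced optimal one. Order the subsets so that $D(C_1)=\delta$. If some $D(C_i)<\delta$, the idea is to ``give back'' some room: reverse the final cuts affecting such a small subset and re-cut so as to enlarge it toward diameter $\delta$ while correspondingly shrinking a neighbour. More robustly, I would argue by a continuity/intermediate-value scheme on the whole division: move the cutting hyperplanes continuously within the optimal level set $\{P : F(P)\le\delta\}$, which is compact and connected along suitable paths, using the fact that sweeping a hyperplane across $C$ makes the diameters of the two sides vary continuously and monotonically, and that at the endpoints of such a sweep one subset can be made to have diameter as large as $\delta$. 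Iterating this one subset at a time — always keeping the maximum at $\delta$ and increasing the number of subsets whose diameter equals $\delta$ — terminates in a division with $D(C_i)=\delta$ for all $i$, i.e.\ a balanced optimal division. (It is not claimed, and indeed is false by Example~\ref{ex:mMDbalanced}, that \emph{every} optimal division is balanced; we only need \emph{one}.)

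\medskip

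\textbf{Main obstacle.} The delicate point is Step 2, making the balancing argument rigorous: one must show that the "equalization'' moves genuinely preserve the admissibility of the division (successive-cut structure, non-empty interiors) and strictly make progress (strictly more subsets reaching diameter $\delta$, or a well-founded potential decreasing), so that the process halts at a balanced configuration rather than cycling or converging only in the limit. I expect the cleanest route is to phrase it as: among all optimal $n$-divisions, choose one maximizing the number of indices $i$ with $D(C_i)=\delta$ (this max exists since the count is an integer in $\{1,\ldots,n\}$ and the optimal set is non-empty); then argue by contradiction that if this number is $<n$, a local adjustment of one cut produces an optimal division with a strictly larger count. The technical care needed is precisely in that local adjustment — ensuring the adjusted piece stays convex with non-empty interior and its diameter rises to exactly $\delta$ without pushing any other diameter above $\delta$ — which uses continuity of $D$ under the hyperplane sweep together with the constraint $F(P)\le \delta$. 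The compactness in Step 1 is routine once the configuration space and its closure are set up carefully, the only subtlety being the handling of degenerate limits, which Lemma~\ref{le:cotatrivial} neutralizes.
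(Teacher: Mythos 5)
Your Step 1 is essentially the paper's existence argument (the paper applies Blaschke selection directly to the subsets of a minimizing sequence and, if some limits degenerate, re-subdivides as in Lemma~\ref{le:cotatrivial}; your configuration-space phrasing is an equivalent packaging). The problem is Step 2, which you yourself flag as the main obstacle: the local-adjustment scheme is not carried out, and as described it does not make the strict progress your induction on the count requires. Moving a single (final) cut to enlarge a deficient subset $C_i$ necessarily shrinks its neighbour $C_j$; if $D(C_j)=\delta$ before the move, it may drop below $\delta$ afterwards, so the number of subsets attaining $\delta$ can stay the same or even decrease. Worse, if $D(C_i\cup C_j)<\delta$, no position of that cut can raise $D(C_i)$ to $\delta$ at all, so the adjustment must propagate to earlier cuts in the tree, each of which separates two regions containing many final subsets --- exactly the situation your ``one subset at a time'' scheme does not control. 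Nothing in the proposal rules out these failure modes, so the termination claim is unsupported.

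The paper closes this gap by a different mechanism: induction on $n$ using the \emph{first} cut. That cut splits $C$ into two convex regions $E_1,E_2$ carrying induced $n_i$-divisions; the induction hypothesis supplies balanced \emph{optimal} $n_i$-divisions of $E_1^t,E_2^t$ for every position $t$ of the (parallel-translated) first cut, and one applies the intermediate value theorem to the optimal values $t\mapsto D_{n_i}(E_i^t)$ rather than to the diameters of individual subsets. The resulting balanced division $Q$ satisfies $D(Q)=D_{n_2}(E_2^{t_0})\leq D_{n_2}(E_2)\leq D(P)$, and global optimality of $P$ forces equality, so the common value is automatically $\delta$. This is the step your approach is missing: by rebalancing each side optimally after every move of the cut, the paper never has to track which individual subsets have reached diameter $\delta$, and the ``gain one, lose one'' pathology cannot occur. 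To salvage your argument you would need either to adopt this inductive structure or to prove a genuinely new lemma showing that the count can always be strictly increased, which the sweep of a single cut does not give.
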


\begin{proof}
Let us first prove the existence of optimal divisions. As the optimal value $D_n(C)$ is defined as an infimum, we can consider a sequence  $\{P_k\}$ of $n$-divisions of $C$  such that $\displaystyle{\lim_{k\to\infty}D(P_k)=D_n(C)}$. Denote by  $C_1^k,\ldots, C_n^k$ the subsets given by the division $P_k$, for any  $k\in\mathbb{N}$. Without loss of generality, we can assume that  $D(P_k)=D(C_1^k)$ (consequently,  $D(C_1^k)\geq D(C_j^k)$, for $j=2,\ldots,n$, and  $\displaystyle{ D_n(C)=\lim_{k\to\infty}D(C_1^k)}$).
By applying Blaschke selection theorem~\cite[Th.~1.8.7]{schneider}, for each $j\in\{1,\ldots,n\}$, we have that (a subsequence of) the sequence $\{C_j^k\}$ will converge to a subset $C_j^\infty$, which could have empty interior in some cases. 
Therefore,  $C_1^\infty,\ldots,C_n^\infty$ will provide an $m$-division $P^\infty$ of $C$, with $m\leq n$,  
satisfying $\displaystyle{D(P^\infty)=D(C_1^\infty)=\lim_{k\to\infty}D(C_1^k)=D_n(C)}$ by construction. If $m=n$, then $P^\infty$ is an optimal $n$-division of $C$, and if $m<n$, we can proceed as in the proof of Lemma~\ref{le:cotatrivial} to obtain an $n$-division $P$ of $C$ (by dividing properly $C_m^\infty$, for instance) such that $D(P)=D(C_1^\infty)=D_n(C)$, thus being optimal.

We will now prove that we can find a \emph{balanced} optimal $n$-division of $C$ by  induction on the number $n$ of subsets.
Let $P$ be an optimal $n$-division of $C$.
If $n=2$, $P$ will be determined by just one hyperplane $H$, with subsets $C_1, C_2$. We can assume that $D(P)=D(C_2)>D(C_1).$ Moreover, we can also assume that $H$ is not parallel to any flat piece of $\partial C$ (if needed, we can consider another optimal division determined by a hyperplane $H'$ close to $H$, with subsets $C_1', C_2'$, satisfying that $C_2'\subset C_2$ and  $D(C_1')<D(C_2')$). 
Let $H^t$ be the hyperplane parallel to  $H$ at distance $t\geq0$ with respect to $C_1$. This yields a new 2-division $P^t$, with subsets $C_1^t, C_2^t$ satisfying $C_1\subseteq C_1^t$, $C_2^t\subseteq C_2$, for each $t\geq0$.
Let $t_1>0$ be the unique value for which $H^{t_1}$ is tangent to $\partial C$, and so $C_1^{t_1}$ equals $C$ and $C_2^{t_1}$ reduces to a single point. Then we have
$$D(C_1^0)=D(C_1)<D(C_2)=D(C_2^0)$$ and
$$D(C_1^{t_1})=D(C)>D(C_2^{t_1})=0,$$
and so, by continuity, there exists $t_0\in(0,t_1)$ such that $D(C_1^{t_0})=D(C_2^{t_0})$. This implies that the corresponding 2-division $P^{t_0}$ of $C$ (determined by $H^{t_0}$) is balanced. Moreover,
$$
D(P^{t_0})=D(C_1^{t_0})=D(C_2^{t_0})\leq D(C_2)=D(P),
$$
where the inequality sign above is due to the set  inclusion $C_2^{t_0}\subset C_2$. Strict inequality above would contradict the optimality of $P$, and so necessarily $D(P^{t_0})=D(P)$ and $P^{t_0}$ is optimal.

Assume now the following induction hypothesis: for a fixed natural number $n\geq2$, there is always a balanced optimal $m$-division of any convex body in $\rr^d$,  for $m<n$. Let us prove that there exists a balanced optimal division for $n$ subsets.
By using Remark~\ref{re:dos}, call  $H$ to a hyperplane cut from $P$ dividing $C$ into two different convex regions $E_1$, $E_2$. As previously, we can assume that $H$ is not parallel to any flat piece of $\partial C$. Let $P_i$ be the division of $E_i$ into $n_i$ subsets induced by $P$, $i=1,2$, with $n_1+n_2=n$.
By using the induction hypothesis, let $P_i'$ be a balanced optimal $n_i$-division of $E_i$, $i=1,2$. In particular, \begin{equation}
\label{eq:simplify}
D_{n_i}(E_i)=
D(P_i')\leq D(P_i)\leq\max\{D(P_1), D(P_2)\}=D(P),
\end{equation}
for $i=1,2$.

If $D(P_1')=D(P_2')$, then the divisions $P_1'$, $P_2'$ yield a balanced $n$-division $Q$ of $C$. Moreover, by using~\eqref{eq:simplify}, we have that $D(Q)=D(P_1')\leq D(P)$ and so, in order to avoid a contradiction with the optimality of $P$, it follows that $D(Q)=D(P)$ and then $Q$ is optimal.
On the other hand, if (say) $D(P_1')<D(P_2')$, let  $H^t$ be the hyperplane parallel to $H$ at distance $t\geq0$ with respect to $E_1$, which will divide $C$ into two convex regions  $E_1^t$, $E_2^t$, with $E_1\subseteq E_1^t$ and  $E_2^t\subseteq E_2$. For each $t\geq0$, taking into account the induction hypothesis, we can consider the balanced optimal $n_i$-division $P_i^t$ of $E_i^t$, $i=1,2$. By using a  continuity reasoning similar to the one from the previous case $n=2$, it can be checked that there exists  $t_0>0$
such that $D(P_1^{t_0})=D(P_2^{t_0})$. These two divisions will yield a balanced $n$-division $Q$ of $C$. Moreover,
$$
D(Q)=D(P_2^{t_0})=D_{n_2}(E_2^{t_0})\leq D_{n_2}(E_2)\leq D(P),
$$
taking into account the set inclusion $E_2^{t_0}\subset E_2$ and~\eqref{eq:simplify}. Strict inequalities above would contradict the optimality of $P$, and so necessarily $D(Q)=D(P)$ and then $Q$ is optimal.
\end{proof}

The following example shows that not all optimal divisions for this problem are necessarily balanced.

\begin{example}
\label{ex:mMDbalanced}
\textup{For a circle $C$, it is clear that any division $P$ of $C$ into two subsets satisfies that $D(P)=D(C)$, and so $D_2(C)=D(C)$ and any 2-division of $C$ is  optimal (this is related to the classical Borsuk's problem, see~\cite{ac-borsuk}). In particular, any non-balanced 2-division of $C$ will be optimal. The same fact happens for any ball in general dimension}.
\end{example}

We will now focus on the optimal value $D_n(C)$. In general, it is not possible to determine precisely that value, and lower and upper bounds will be established. First, let us introduce the notion of {\em orthogonal widths} for a convex body $C\subset\rr^d$.

In $\rr^2$, any planar convex body $C$ is contained in a 2-orthotope (rectangle) $H_C$ with side lengths $w_1=w(C) \leq w_2$,
where $w_2$ is the  width of the projection of $C$ on any supporting line of $C$ providing $w(C)$, see Figure~\ref{fig:hyper}.
\begin{figure}[ht]
    \centering
    \includegraphics[width=0.38\textwidth]{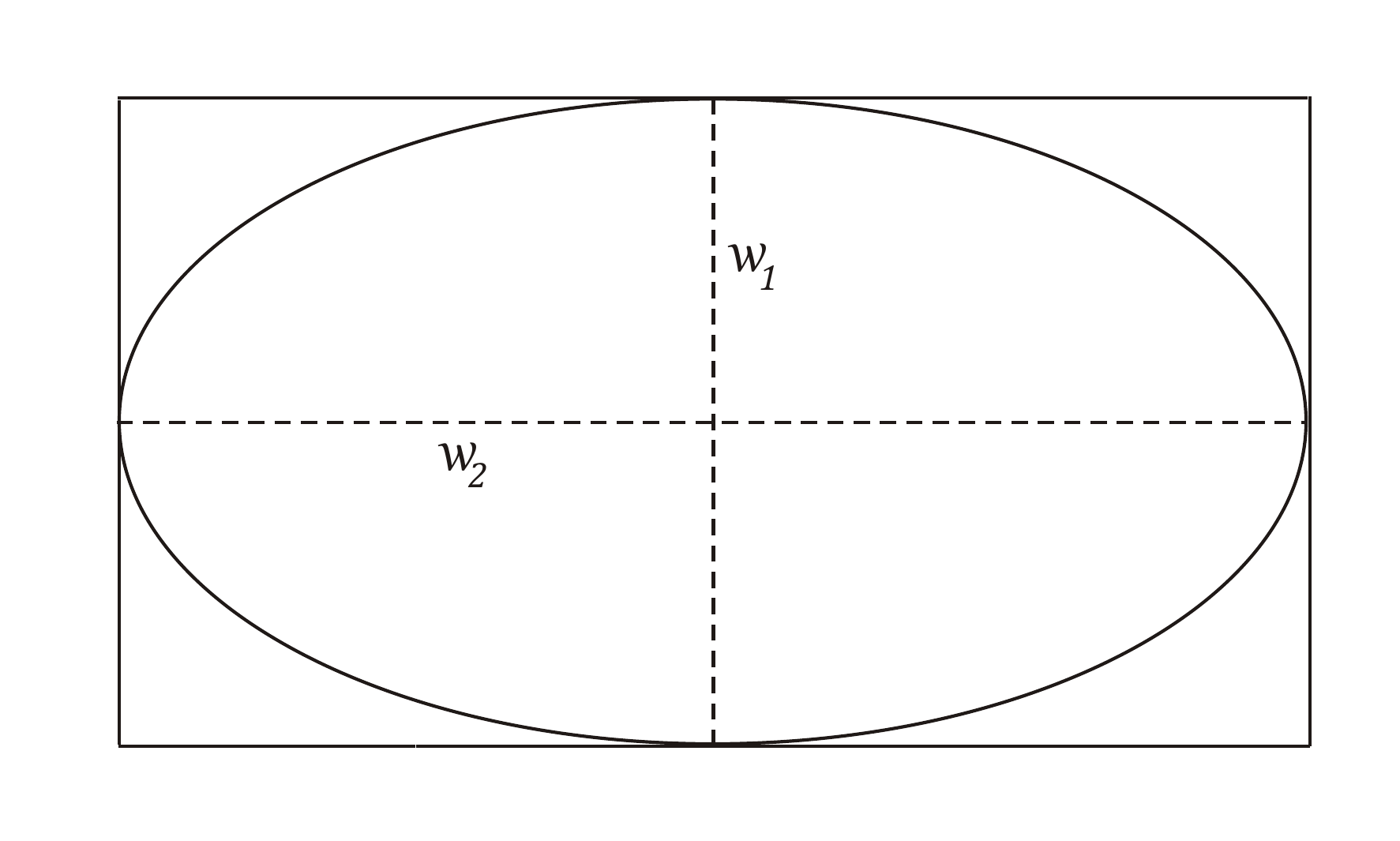}
    \caption{Associated 2-orthotope to an ellipse}
    \label{fig:hyper}
\end{figure}


In a similar way, for a convex body $C\subset\rr^d$, it is clear that $C$ is contained in a $d$-orthotope $H_C$ of lengths $w_1\leq w_2\leq \ldots \leq w_d$, where $w_1=w(C_1)$, $C_1=C$, and for $i\geq 2$, $w_i=w(C_i)$ for $C_i=\pi_i(C_{i-1})\subset\rr^{d-i+1}$, where $\pi_i$ denotes the orthogonal projection on any supporting hyperplane of $C_{i-1}$ determining $w(C_{i-1})$.
The values $w_1,\ldots, w_d$ will be referred to as the  \emph{orthogonal widths} of $C$.


\begin{theorem}
\label{prop:mMDbound2}
Let $C\subset\rr^d$ be a convex body with orthogonal widths $w_1,\ldots,w_{d}$,  and  let $n\in\nn$, $n\geq2$.
Then,
\begin{equation}
\label{eq:mMDbound2}
\frac{1}{n} D(C) < D_n(C)\leq\min\bigg\{D(C),\,
\sqrt{\frac{w_1^2}{a_1^2}+\frac{w_2^2}{a_2^2}+\ldots+\frac{w_{d}^2}{a_{d}^2}}\bigg\},
\end{equation}
for any $a_1\leq\ldots \leq a_d$ natural numbers such that $n\geq a_1\cdot \ldots \cdot a_d$.
\end{theorem}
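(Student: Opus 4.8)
The plan is to establish the two bounds of \eqref{eq:mMDbound2} separately. The inequality $D_n(C)\le D(C)$ is already contained in Lemma~\ref{le:cotatrivial}, so the substantive statements are the strict lower bound $D(C)/n<D_n(C)$ and the upper bound $D_n(C)\le\sqrt{\sum_i w_i^2/a_i^2}$.

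For the upper bound I would exhibit an explicit $n$-division. Recall that $C$ sits inside a $d$-orthotope $H_C$ whose edges, in a suitable orthonormal frame $e_1,\dots,e_d$, have lengths $w_1,\dots,w_d$, so that the $e_i$-extent of $C$ is $w_i$ for $i=1$ and at most $w_i$ in general. Cutting $C$ by the $a_1-1$ hyperplanes orthogonal to $e_1$ that split the $e_1$-extent of $C$ into $a_1$ equal parts produces $a_1$ convex pieces, each of $e_1$-extent at most $w_1/a_1$; since the orthogonal projection of $\operatorname{int}C$ onto the $e_1$-axis is an open interval, each piece also has non-empty interior. Recursing inside each piece in the directions $e_2,\dots,e_d$ (at step $i$ splitting its current $e_i$-extent, which is at most $w_i$, into $a_i$ equal parts) one obtains $a_1\cdots a_d\le n$ convex subsets with non-empty interior, each contained in a box of dimensions at most $(w_1/a_1)\times\dots\times(w_d/a_d)$ and hence of diameter at most $\sqrt{\sum_i w_i^2/a_i^2}$. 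All these cuts are \emph{successive} hyperplane cuts (perform the $e_1$-cuts one after another, then recurse into each slab one at a time), and if $a_1\cdots a_d<n$ we perform $n-a_1\cdots a_d$ further cuts, each splitting one current subset into two convex subsets with non-empty interior; such cuts never increase the largest diameter. This produces an $n$-division $P$ with $D(P)\le\sqrt{\sum_i w_i^2/a_i^2}$, giving the upper bound.

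For the lower bound, fix an arbitrary $n$-division $P=\{C_1,\dots,C_n\}$ of $C$ and points $p,q\in C$ with $|p-q|=D(C)$. The segment $[p,q]\subseteq C$ is covered by the convex subsegments $C_i\cap[p,q]$, each of length at most $D(C_i)\le D(P)$, so $D(C)=|p-q|\le\sum_{i=1}^n\operatorname{length}(C_i\cap[p,q])\le n\,D(P)$; taking the infimum over $P$ gives $D(C)/n\le D_n(C)$. To upgrade this to a strict inequality, suppose $D_n(C)=D(C)/n=:\rho$ and choose, using Theorem~\ref{prop:mMDexistence}, an optimal $n$-division $P$. Then all the inequalities above become equalities: every one of the $n$ subsets meets $[p,q]$, the subsegments $C_i\cap[p,q]$ tile $[p,q]$, and each has length exactly $\rho=D(C_i)$. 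Relabelling, $C_i\cap[p,q]=[s_{i-1},s_i]$ with $s_i=p+i\rho u$, $u=(q-p)/|q-p|$, and $[s_{i-1},s_i]$ is a diameter of $C_i$, so $C_i\subseteq B(s_{i-1},\rho)\cap B(s_i,\rho)$. A direct computation then shows that $C_i$ lies in the slab $\{\langle s_{i-1},u\rangle\le\langle\cdot,u\rangle\le\langle s_i,u\rangle\}$ and meets its two bounding hyperplanes only at $s_{i-1}$ and $s_i$. Hence the section of $C$ by the hyperplane $\{\langle\cdot,u\rangle=\langle s_1,u\rangle\}$ — a level strictly interior to $[\langle p,u\rangle,\langle q,u\rangle]$ because $n\ge2$ — reduces to the single point $s_1$, since the only subsets reaching that level are $C_1$ and $C_2$. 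By convexity of $C$ this forces every point of $C_1\setminus\{s_1\}$ to be collinear with $s_1$ and $q$, i.e.\ $C_1$ lies on the line through $p$ and $q$, contradicting that $C_1$ has non-empty interior. Therefore $D(C)/n<D_n(C)$.

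The covering estimate and the grid construction are routine; the delicate point is the strict inequality, where one must couple the existence of an optimal division (Theorem~\ref{prop:mMDexistence}) with the geometric rigidity forced by equality in the covering estimate — in essence, that $n$ convex pieces cannot simultaneously realise their diameters along one common segment while jointly filling a convex body with non-empty interior.
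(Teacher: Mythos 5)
Your proof is correct, and its skeleton matches the paper's: the upper bound comes from the same equispaced grid of hyperplanes associated to the orthotope $H_C$ (the paper cuts $H_C$ itself into $a_1\cdots a_d$ boxes and invokes the monotonicity $D_n(C)\le D_m(C)$ from Lemma~\ref{le:cotatrivial} instead of padding with extra cuts, which is only a cosmetic difference), and the non-strict lower bound comes from the same idea of covering a diameter segment $[p,q]$ by the convex traces $C_i\cap[p,q]$. Where you genuinely diverge is in the strictness of $D(C)/n<D_n(C)$. The paper takes a \emph{balanced} optimal division (Theorem~\ref{prop:mMDexistence}) and argues locally that two consecutive subsegments $s_i$, $s_{i+1}$ of the diameter segment cannot both realise the diameters of their respective subsets, a claim justified essentially by Figure~\ref{fig:segments}. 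You instead assume equality $D_n(C)=D(C)/n$, pass to an optimal division (existence again via Theorem~\ref{prop:mMDexistence}, but balancedness is not needed), and extract the full rigidity of the equality case: each $C_i$ is trapped in the lens $B(s_{i-1},\rho)\cap B(s_i,\rho)$, hence in the slab between the levels of $s_{i-1}$ and $s_i$, touching the bounding hyperplanes only at those two points; the section of $C$ at the level of $s_1$ then degenerates to the single point $s_1$, which by convexity flattens $C_1$ onto the line through $p$ and $q$ and contradicts its non-empty interior. Your lens computation is correct ($|x-s_{i-1}|\le\rho$ and $|x-s_i|\le\rho$ with $|s_i-s_{i-1}|=\rho$ do force $\langle s_{i-1},u\rangle\le\langle x,u\rangle\le\langle s_i,u\rangle$, with equality only at the two centres), and the equality of total lengths rules out the degenerate possibility that $[p,q]$ lies inside a cutting hyperplane shared by two subsets. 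Your version is more detailed and arguably more rigorous than the paper's pictorial step; the paper's is shorter. Both hinge on the prior existence theorem.
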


\begin{proof}

For the left-hand side of \eqref{eq:mMDbound2}, let $P$ be a balanced optimal $n$-division of $C$ with subsets $C_1,\ldots,C_n$, in view of Theorem~\ref{prop:mMDexistence}. Note that $D_n(C)=D(P)=D(C_i)$, $i=1,\ldots,n$. Fix a segment $s$ in $C$ with $\ell(s)=D(C)$, where $\ell$ represents the Euclidean length. We will distinguish  two possibilities. If $s$ is completely contained in a subset $C_j$, then $D_n(C)=D(C_j)=D(C)$ and the statement trivially holds.  Otherwise,
$P$ will divide $s$ into $m$ segments $s_1,\ldots, s_m$, with $2\leq m\leq n$. We can assume that $s_i\subset C_i$, $i=1,\ldots,m$. Then, it follows that $\ell(s_i)\leq D(C_i)$ and, moreover,
\begin{equation}
\label{eq:segment}
D(C)=\ell(s)=\sum_{i=1}^m \ell(s_i)<\sum_{i=1}^m D(C_i)\leq \sum_{i=1}^n D(C_i)= n\,D(P)=n\,D_n(C), \end{equation}
where the strict inequality above holds since it is always  possible to find (at least) two points in one of the subsets given by $P$, say $C_j$, at a distance strictly greater than $\ell(s_j)$ (recall that $P$ is determined by hyperplane cuts and so, if $\ell(s_i)=D(C_i)$ for some $i\in\{1,\ldots,m\}$, then $\ell(s_{i+1})<D(C_{i+1})$, see Figure~\ref{fig:segments}).

\begin{figure}[ht]
    \centering
    \includegraphics[width=0.5\textwidth]{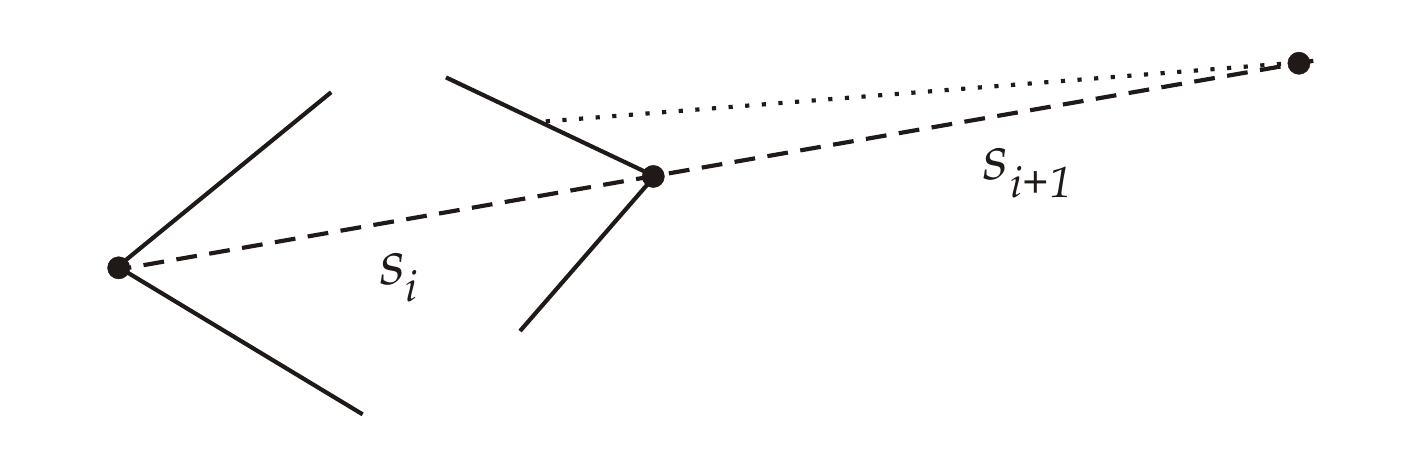}
    \caption{If a segment $s_i$ satisfies that $\ell(s_i)=D(C_i)$, then $\ell(s_{i+1})<D(C_{i+1})$: observe that the length of the dotted segment is greater than $\ell(s_{i+1})$}
    \label{fig:segments}
\end{figure}

For the right-hand side of~\eqref{eq:mMDbound2}, let $H_C$ be the $d$-orthotope containing $C$ associated to the orthogonal widths $w_1,\ldots, w_d$ of $C$.
Observe that the faces of  $H_C$ will be given by some pairs of parallel hyperplanes $h_i$, $h_i'$, separated by a distance equal to $w_i$, and bounding a certain slab $B_i$, $i=1,\ldots,d$.
Consider $a_i-1$ hyperplanes parallel to $h_i$ and equally spaced in $B_i$, for  $i=1,\ldots,d$. These hyperplanes yield a mesh-type division $P$ of $H_C$ into $r=a_1\cdot\ldots\cdot a_d$ subsets $H_1,\ldots,H_r$ (which can be seen as a $r$-division given by $r-1$ successive hyperplane cuts), where each $H_i$ is a $d$-orthotope with edge lengths $w_1/a_1$, $w_2/a_2,\ldots,w_{d}/a_{d}$. Then,
$$D(P)=D(H_i)=\sqrt{\frac{w_1^2}{a_1^2}+\frac{w_2^2}{a_2^2}+\ldots+\frac{w_{d}^2}{a_{d}^2}},$$ which constitutes an upper bound for $D_n(C)$ in view of Lemma~\ref{le:cotatrivial} (since $P$ will induce a $m$-division of $C$, with $m\leq r\leq n$, by hypothesis).
The proof finishes taking into account that   $D_n(C)\leq D(C)$, again by Lemma~\ref{le:cotatrivial}.
\end{proof}

\begin{remark}
\label{re:almostsharp}
The lower bound from  Theorem~\ref{prop:mMDbound2} can be considered  sharp in the following sense: for any $\varepsilon>0$, there always exists a convex body $C$ in $\rr^d$ such that $D_n(C)< D(C)/n+\varepsilon$.
It can be checked  that it suffices to take a  narrow enough rectangle in the planar case, or the corresponding  analogous set in higher dimension.
\end{remark}

\begin{example}
For a square $C\subset\rr^2$ of side length $1$ and $n=7$, Theorem~\ref{prop:mMDbound2} above gives $D_7(C)\leq \frac{\sqrt{13}}{6}$, which is obtained for $a_1=2$, $a_2=3$.
\end{example}

\subsection{min-Max problem for the width}
For this problem, we stress that the corresponding optimal value $w_n(C)$  for a given convex body $C$ can be computed:
we will see in Theorem~\ref{prop:mMwov} that such a value equals  $w(C)/n$
(extending~\cite[Le.~4.1]{CG}),
as well as obtaining the existence of optimal divisions. 
Moreover, it also shows that all optimal divisions are balanced in this setting (which improves~\cite[Le.~2.3]{CG}).
Finally, we will  describe a (linear) algorithm for determining the optimal value (and so an optimal division) for any convex polygon.
We start by recalling the following well-known result due to T.~Bang~\cite{bang}. 

\begin{lemma}(\cite{bang})
\label{le:bang}
Let $C$ be a convex body in $\rr^d$. Assume that $\displaystyle{C\subset\bigcup_{i=1}^m B_i}$, where $B_i$ is a slab delimited by two parallel hyperplanes in $\rr^d$, $i=1,\ldots,m$. Then,
$\displaystyle{w(C)\leq\sum_{i=1}^m w(B_i)}$.
\end{lemma}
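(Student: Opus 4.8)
The statement to prove is Bang's covering theorem: if a convex body $C$ in $\rr^d$ is contained in a union of slabs $B_1,\dots,B_m$, then $w(C) \le \sum_{i=1}^m w(B_i)$. The plan is to follow Bang's classical argument, which proceeds by contradiction together with a clever algebraic selection of signs. Suppose, for contradiction, that $C \subset \bigcup_{i=1}^m B_i$ but $\sum_i w(B_i) < w(C)$. Write each slab as $B_i = \{x : |\langle u_i, x\rangle - c_i| \le w(B_i)/2\}$ for a unit normal $u_i$ and center value $c_i$; here I normalize so the half-width is $w(B_i)/2$. Introduce a small slack parameter: choose $\delta > 0$ with $\sum_i w(B_i) + 2m\delta$ still strictly less than $w(C)$, and enlarge each slab to $B_i^\delta$ of half-width $w(B_i)/2 + \delta$, so that each point of $C$ lies in the \emph{interior} of some enlarged slab.

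The core of the argument is the following: for any choice of signs $\eps = (\eps_1,\dots,\eps_m) \in \{+1,-1\}^m$, consider the point
\[
p_\eps = \sum_{i=1}^m \eps_i\Bigl(\tfrac{w(B_i)}{2} + \delta\Bigr) u_i .
\]
First I would show that at least one of the $2^m$ points $p_\eps$ lies in $C$ (after translating $C$ appropriately so that the ``center'' point $\sum$ corresponds to a point of $C$ — more precisely, one fixes a base point $x_0$ in $C$ and works with $x_0 + p_\eps$, choosing the signs greedily). The selection of the good sign sequence is the heart of Bang's proof: one builds $\eps$ coordinate by coordinate, at each step choosing $\eps_k$ so that the partial sum stays inside $C$, using convexity of $C$ together with the fact that $C$ is not covered with slack — i.e. moving a distance $w(C)$ in any direction cannot leave $C$ if one started suitably. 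Then, having placed $x_0 + p_\eps \in C \subset \bigcup_i B_i$, there is some index $j$ with $x_0 + p_\eps \in B_j$, hence $|\langle u_j, x_0 + p_\eps\rangle - c_j| \le w(B_j)/2$. On the other hand, computing $\langle u_j, x_0 + p_\eps\rangle$ and isolating the $i = j$ term gives a contribution $\eps_j(w(B_j)/2 + \delta)$ whose absolute value already exceeds $w(B_j)/2$ — provided the cross terms $\langle u_j, u_i\rangle$ for $i \ne j$ can be controlled, which is exactly what the careful greedy sign choice arranges (it is designed to make those cross contributions cancel in the right direction). This yields the contradiction.

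The main obstacle, and the genuinely delicate part, is the sign-selection step: showing that the signs can be chosen so that simultaneously (a) $x_0 + p_\eps$ remains in $C$, and (b) for the slab $B_j$ that catches it, the off-diagonal terms do not spoil the estimate. Bang's original device handles this by an induction on $m$ in which the normals $u_i$ are processed in a specific order and each new sign is pinned down by a one-dimensional comparison; one should either reproduce that induction or simply cite \cite{bang} (the lemma is already attributed to Bang in the statement). Given that the paper quotes this only as a ``well-known result,'' the cleanest course is to state that the proof is Bang's and refer to \cite{bang}, possibly after recording the two facts actually used downstream — namely that the slabs may be taken closed and that equality forces the slabs to essentially tile a minimal-width slab of $C$ — if those refinements are needed later. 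I would keep the write-up short: recall the contradiction setup and the enlarged slabs, state the sign-selection lemma as the key input with a pointer to \cite{bang}, and close with the one-line computation producing the contradiction.
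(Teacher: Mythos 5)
The paper does not prove this lemma at all: it is Bang's solution of Tarski's plank problem, stated with the citation \cite{bang} and used as a black box, so your closing recommendation --- record the statement and refer to \cite{bang} --- is exactly what the authors do, and is the right call for a result of this depth.

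That said, if your sketch were to be read as an actual proof it has a genuine gap, and it is worth being precise about where. The mechanism you propose for the sign selection --- build $\eps$ coordinate by coordinate so that the partial sums $x_0+\sum_{i\le k}\eps_i\bigl(\tfrac{w(B_i)}{2}+\delta\bigr)u_i$ ``stay inside $C$'' by convexity --- is not Bang's argument and does not obviously work: a convex body of width $w$ need not contain, from a given base point, a step of prescribed length in either of the directions $\pm u_k$ (think of a base point near a vertex of a triangle), so there is no guarantee that some sign keeps the partial sum in $C$. Moreover, a single greedy rule aimed at membership in $C$ cannot simultaneously arrange that, for the \emph{a priori unknown} index $j$ with $x_0+p_\eps\in B_j$, the cross terms $\sum_{i\ne j}\eps_i\bigl(\tfrac{w(B_i)}{2}+\delta\bigr)\langle u_j,u_i\rangle$ reinforce rather than cancel the diagonal term; Bang handles all indices $j$ at once by choosing $\eps$ to maximize a quadratic form in $p_\eps$ (so that flipping any single $\eps_j$ cannot increase it, which yields $|\langle u_j,p_\eps\rangle -c_j|\ge \tfrac{w(B_j)}{2}+\delta$ for every $j$ simultaneously), and the membership of the selected point in $C$ is established by a separate, and genuinely delicate, argument using that each plank may be assumed to meet $C$. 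Since you flag the sign-selection step as the unproved heart of the matter and ultimately defer to \cite{bang}, the proposal is acceptable as a plan, but the specific greedy device described should not be presented as if it were the missing step.
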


\begin{theorem}
\label{prop:mMwov}
Let $C$ be a convex body in $\rr^d$. Then, there exists an optimal $n$-division for the min-Max problem for the width. Moreover,
\begin{equation}
\label{eq:vow}
w_n(C)=w(C)/n,
\end{equation}
and any optimal $n$-division of $C$ is balanced.
\end{theorem}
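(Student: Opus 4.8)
The plan is to prove three things: the lower bound $w_n(C) \geq w(C)/n$ for every $n$-division (hence for the optimal value), the existence of an $n$-division achieving $w(C)/n$, and that any optimal division must be balanced with all widths equal to $w(C)/n$.

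For the lower bound, I would argue as follows. Let $P$ be any $n$-division of $C$ with subsets $C_1,\ldots,C_n$. For each $i$, the subset $C_i$ is contained in the slab $B_i$ delimited by two parallel supporting hyperplanes realizing $w(C_i)$, so $w(B_i) = w(C_i)$. Since $C = C_1\cup\cdots\cup C_n \subset \bigcup_{i=1}^n B_i$, Bang's Lemma~\ref{le:bang} gives $w(C) \leq \sum_{i=1}^n w(C_i) \leq n \cdot \max_i w(C_i) = n\, w(P)$. Taking the infimum over all $n$-divisions yields $w_n(C) \geq w(C)/n$.

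For existence and the matching upper bound, I would exhibit an explicit balanced division: take the slab of minimal width of $C$, i.e. two parallel supporting hyperplanes at distance $w(C)$, and cut $C$ with the $n-1$ hyperplanes parallel to these and equally spaced, producing slices $C_1,\ldots,C_n$ each contained in a subslab of width $w(C)/n$. Then $w(C_i) \leq w(C)/n$ for every $i$. Combined with the lower bound applied to this particular division, we get $\max_i w(C_i) \geq w(C)/n$, forcing $w(C_i) = w(C)/n$ for all $i$ (here one uses that the width in the direction of the cut is exactly $w(C)/n$ since the two outermost slices touch the supporting hyperplanes, and the width of a slice cannot be smaller than the minimal width $w(C)$ divided appropriately — actually more simply, $w(C_i)\le w(C)/n$ and the max is $\ge w(C)/n$ so $w(P)=w(C)/n$). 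This shows $w_n(C) \leq w(C)/n$, hence equality, and this division is optimal and balanced.

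Finally, for the claim that \emph{every} optimal division is balanced: let $P$ be optimal, so $\max_i w(C_i) = w(C)/n$, i.e. $w(C_i) \leq w(C)/n$ for all $i$. By the Bang inequality chain, $w(C) \leq \sum_{i=1}^n w(C_i) \leq \sum_{i=1}^n w(C)/n = w(C)$, so both inequalities are equalities; in particular $\sum_i w(C_i) = \sum_i w(C)/n$ together with $w(C_i)\le w(C)/n$ forces $w(C_i) = w(C)/n$ for every $i$. Thus $P$ is balanced. I expect the main subtlety to be the rigorous application of Bang's Lemma to the cover by the minimal slabs $B_i$ — one must make sure the $C_i$ genuinely sit inside slabs of width exactly $w(C_i)$ (which is immediate from the definition of width via parallel supporting hyperplanes) and that equality analysis in Bang's inequality is valid; everything else is a short squeezing argument.
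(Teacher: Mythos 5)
Your proposal is correct and follows essentially the same route as the paper: Bang's lemma applied to the cover of $C$ by the minimal-width slabs of the $C_i$ gives the lower bound $w(C)/n$, the equally spaced cuts of a slab realizing $w(C)$ give a matching balanced division, and the equality case of the Bang inequality chain forces every optimal division to be balanced (the paper phrases this last step as a contradiction from $w(C_1)>w(C_2)$, you as a direct squeeze, but it is the same argument). The only cosmetic slip is the momentary claim in your existence paragraph that $w(C_i)=w(C)/n$ for all $i$ follows immediately there — at that point you only get $\max_i w(C_i)=w(C)/n$ — but you correct this yourself and the full balancedness is recovered in your final paragraph.
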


\begin{proof}
For any $n$-division $P$ of $C$ into  subsets $C_1,\ldots,C_n$, by applying Lemma~\ref{le:bang} we will have that
$$
w(C)\leq\sum_{i=1}^n w(C_i)\leq n\,w(P),
$$
which implies that $w_n(C)\geq w(C)/n$. On the other hand, let $B$ be a slab providing $w(C)$. We can construct an $n$-division $P_0$ of $C$ by considering $n-1$ parallel hyperplanes equally spaced in $B$, see Figure~\ref{fig:P0}. It is clear that  $w(P_0)=w(C)/n$, which gives that $P_0$ is optimal, as stated.
Finally, let $P$ be an optimal $n$-division of $C$ which is not balanced. If $C_1,\ldots, C_n$ are the subsets of $C$ determined by $P$, then we can assume without loss of generality that $w(P)=w(C_1)>w(C_2)$.  By applying Lemma~\ref{le:bang}, it follows that
$$
w(C)\leq\sum_{i=1}^n w(C_i)<n\,w(C_1)=n\,w(P),
$$
which implies that $w_n(C)=w(P)>w(C)/n$. This contradicts~\eqref{eq:vow}, and so $P$ must be balanced.
\end{proof}

\begin{figure}[ht]
    \centering
    \includegraphics[width=0.75\textwidth]{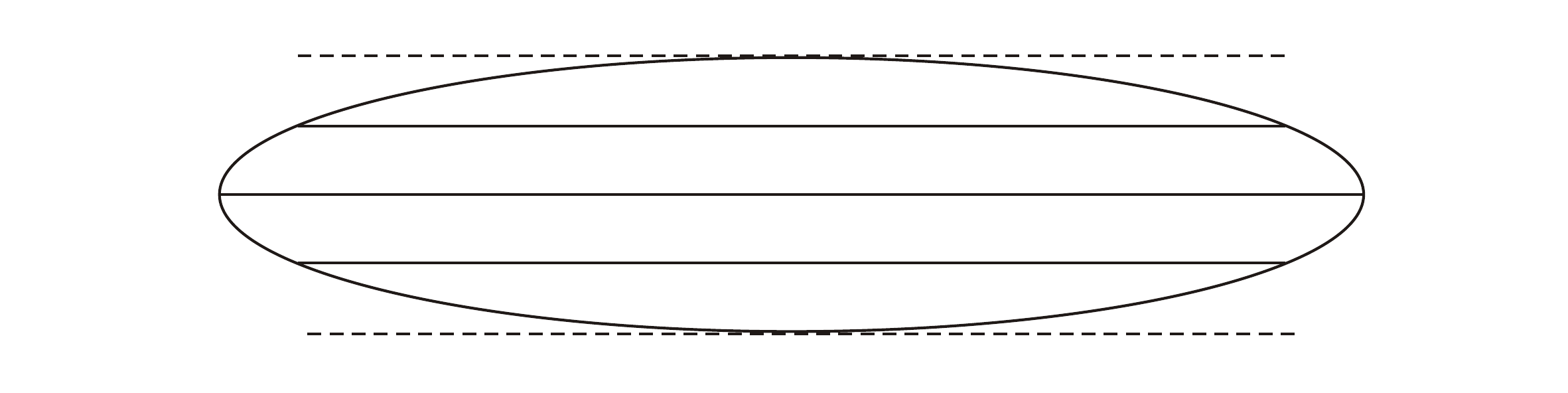}
    \caption{An optimal 4-division of an ellipse
    }
    \label{fig:P0}
\end{figure}

In view of  Theorem~\ref{prop:mMwov}, determining the width of a convex body $C$ immediately leads to the optimal value for the corresponding  min-Max problem, as well as to an optimal $n$-division of $C$ (given by an equally-spaced partition of a slab providing $w(C)$). Such a determination is, in general, a hard question from the computational point of view.
However, for the family of convex polygons, this can be done by means of a \emph{linear} procedure. The key point herein is the following well-known property: for a convex polygon $C$, and a slab $B$ providing $w(C)$, it holds that at least one of the lines delimiting $B$ lies in a side of $C$. Thus, we can calculate the  \emph{relative width} of $C$ with respect to each side of $C$ (by considering parallel lines to each of them), being $w(C)$ the smallest of those values. In computational terms, this is a linear process with respect to the number of sides of $C$, which can be easily implemented.


\subsection{min-Max problem for the inradius}
\label{subsec:mMi}
As pointed out in the Introduction, the min-Max problem for the inradius
is known as \emph{Conway's fried potato problem} 
and has been deeply studied  in~\cite{bb} (see also~\cite{bb2}).
In that paper, the authors proved that the optimal value of a given convex body $C$ for this problem
can be expressed in terms of the width of a  certain \emph{rounded body} associated to $C$,
as well as describing an optimal division~\cite[Th.~1]{bb}.
These results are stated in Theorem~\ref{prop:bb} for the sake of completeness (see also Definition~\ref{def:rounded}).
Our main contribution in this setting is providing a quadratic  algorithm,
based on the notion of \emph{medial axis}, which leads to the optimal value of any polygon (see Subsection~\ref{subsec:mMi-algorithm}), and the sharp lower bound for that value from Theorem~\ref{prop:desi}.

\begin{definition}
\label{def:rounded}
Let $C$ be a convex body in $\rr^d$, and let $0<\rho\leq I(C)$.
The $\rho$-rounded body $C^\rho$ of $C$ is the union of all the balls of radius $\rho$ which are contained in $C$.
\end{definition}

The notion of rounded body has been previously considered for  different problems in literature. For instance, these sets appear when studying the \emph{isoperimetric problem} inside a convex body of $\rr^d$ (even constituting some solutions in the planar case,  see~\cite{sz}), or when dealing with the classical \emph{Cheeger problem} (the Cheeger set of any planar convex body is characterized as a certain rounded body, see~\cite{kl}).
We also note that this construction can be extended to  radius $\rho$ equal to zero, by assuming that $C^0=C$, for any convex body $C$.

\begin{theorem}(\cite[Th.~1]{bb})
\label{prop:bb}
Let $C$ be a convex body in $\rr^d$. Then, $I_n(C)$ is the unique number $\widetilde{\rho}$ such that
\begin{equation}
\label{eq:bb}
w(C^{\widetilde\rho})=2\,\!n\widetilde\rho.
\end{equation}
Moreover, an optimal balanced $n$-division of $C$ is given by $n-1$ parallel hyperplanes, equally spaced between the two hyperplanes delimiting a slab  which provides   $w(C^{\widetilde\rho})$.
\end{theorem}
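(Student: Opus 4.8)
\textbf{Proof proposal for Theorem~\ref{prop:bb}.}

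The plan is to reduce the min-Max problem for the inradius to the min-Max problem for the width via the rounded-body construction, and then invoke Theorem~\ref{prop:mMwov}. The first step is to establish the basic monotonicity and continuity properties of the map $\rho\mapsto w(C^\rho)$ on $[0,I(C)]$: it is non-increasing (larger balls are harder to fit, so $C^\rho\subseteq C^{\rho'}$ when $\rho\ge\rho'$), continuous, with $w(C^0)=w(C)>0$ and $w(C^{I(C)})=0$ (since $C^{I(C)}$ has empty interior, being a union of inballs, all of the same maximal radius). Meanwhile the function $\rho\mapsto 2n\rho$ is strictly increasing from $0$ to $2nI(C)$. Hence there is a \emph{unique} crossing point $\widetilde\rho\in(0,I(C))$ with $w(C^{\widetilde\rho})=2n\widetilde\rho$; this gives the well-definedness of the statement. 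I would be a little careful about whether $w(C^\rho)$ is strictly decreasing — it need not be — but continuity plus the strict monotonicity of $2n\rho$ already forces uniqueness of the intersection.

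The second step is the upper bound $I_n(C)\le\widetilde\rho$. Take a slab $B$ realizing $w(C^{\widetilde\rho})$, delimited by two parallel hyperplanes, and cut $C$ by $n-1$ hyperplanes parallel to them, equally spaced inside $B$; this is a valid $n$-division $P_0$ by successive hyperplane cuts. Each resulting slab-piece has width $w(C^{\widetilde\rho})/n=2\widetilde\rho$. The key geometric claim is that a convex piece $C_i$ of $C$ lying in a slab of width $2\widetilde\rho$ has inradius at most $\widetilde\rho$: indeed any ball contained in $C_i$ is contained in that slab, so its radius is at most half the slab width. Actually one must be slightly more careful — the relevant bound is on the width of $C_i$ itself, and $I(C_i)\le w(C_i)/2\le\widetilde\rho$. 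Hence $I(P_0)\le\widetilde\rho$, so $I_n(C)\le\widetilde\rho$.

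The third step, and the main obstacle, is the matching lower bound $I_n(C)\ge\widetilde\rho$. Let $P$ be any $n$-division of $C$ into convex subsets $C_1,\dots,C_n$, and set $\rho:=I(P)=\max_i I(C_i)$; I must show $\rho\ge\widetilde\rho$, equivalently (by the monotonicity of $2n\rho-w(C^\rho)$) that $w(C^\rho)\le 2n\rho$. The idea is that each $C_i$, having inradius $\le\rho$, cannot contain the center of any ball of radius $\rho$ at distance more than... — more precisely, the \emph{erosion} $C_i\ominus B_\rho$ (points of $C_i$ at distance $\ge\rho$ from $\partial C_i$) is empty or has empty interior, so $C_i$ sits inside a slab of width $2\rho$ after removing the $\rho$-collar; the cleanest route is: the inball-centers of $C$ lying in $C_i$ form a set of width at most $w(C_i)-\dots$, hmm — the honest statement is that $C^\rho\cap C_i$ is contained in a slab of width $2\rho$ whenever $I(C_i)\le\rho$ (a point $x\in C^\rho$ is the center of a radius-$\rho$ ball inside $C$; if that ball meets only $C_i$ near $x$, the argument is direct, otherwise one uses convexity of $C_i$ and the hyperplane structure of $P$). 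Granting that $C^\rho\subseteq\bigcup_i B_i$ with each $B_i$ a slab of width $2\rho$, Bang's Lemma~\ref{le:bang} gives $w(C^\rho)\le\sum_{i=1}^n w(B_i)\le 2n\rho$, as desired. Combining the two bounds yields $I_n(C)=\widetilde\rho$, and the division $P_0$ constructed in step two is optimal, balanced, and of the stated form. This last step is exactly where the real work of \cite{bb} lies; I would follow their covering-by-slabs argument, with Bang's Lemma playing the same role that Lemma~\ref{le:bang} played in the proof of Theorem~\ref{prop:mMwov}.
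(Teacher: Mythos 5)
First, a point of reference: the paper does not prove this statement at all --- Theorem~\ref{prop:bb} is quoted from Bezdek--Bezdek \cite[Th.~1]{bb} ``for the sake of completeness,'' so there is no in-paper proof to compare yours against, and your attempt has to stand on its own. It does not, for reasons beyond the one you flag. In Step 1, $w(C^{I(C)})=0$ is false: $C^{I(C)}$ is a union of balls of radius $I(C)>0$ and certainly has non-empty interior; the correct value is $w(C^{I(C)})=2I(C)$ (the set of incentres has width zero, and $C^{I(C)}$ is its Minkowski sum with a ball of radius $I(C)$). Since $2I(C)<2nI(C)$, the sign change you need survives, so this is a repairable slip. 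In Step 2, the claim $w(C_i)\le 2\widetilde\rho$ is false for the two extreme pieces, which contain the caps of $C$ outside the slab $B$ realising $w(C^{\widetilde\rho})$: only $C^{\widetilde\rho}$, not $C$, is contained in $B$. The step can be repaired, but only by using the rounded body, which your argument never invokes at the point where it is indispensable: if $I(C_i)>\widetilde\rho$, then $C_i$ contains a ball of radius $r>\widetilde\rho$; that ball is a union of balls of radius $\widetilde\rho$ contained in $C$, hence lies in $C^{\widetilde\rho}\subseteq B$, hence in the sub-slab of width $2\widetilde\rho$ allotted to $C_i$, forcing $r\le\widetilde\rho$.

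The decisive gap is Step 3. The covering claim you propose --- that $C^\rho\cap C_i$ lies in a slab of width $2\rho$ whenever $I(C_i)\le\rho$ --- is false. Let $C$ be a disc of radius $100\rho$ and let $\Delta$ be an equilateral triangle of inradius $\rho$ centred at the centre of $C$; three successive cuts along the lines of the sides of $\Delta$ realise $\Delta$ as a piece of a division (each such cut is a genuine chord of the piece being cut). A disc equals its own $\rho$-rounded body, so $C^\rho\cap\Delta=\Delta$, whose width is $3\rho>2\rho$; indeed every triangle satisfies $w>2I$, so no slab of width $2\rho$ contains $\Delta$. Consequently the covering of $C^\rho$ by one width-$2\rho$ slab per piece does not exist, and Lemma~\ref{le:bang} cannot be applied as you propose; this also explains why the reduction of the inradius problem to Theorem~\ref{prop:mMwov} is not as direct as the analogy suggests. (Your parenthetical also misreads Definition~\ref{def:rounded}: a point of $C^\rho$ is a point \emph{of} some ball of radius $\rho$ contained in $C$, not necessarily the centre of one.) The actual argument of \cite{bb} is not a global slab covering of $C^\rho$ but an induction over the $n-1$ successive cuts, via a lemma controlling $w(K^\rho)$ in terms of the rounded bodies of the two halves produced by a single hyperplane cut; that inductive lemma is precisely what is missing from your sketch. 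Finally, you never address balancedness of the constructed division, i.e.\ why each piece of $P_0$ actually contains a ball of radius $\widetilde\rho$, which again requires the structure of $C^{\widetilde\rho}$ rather than of $C$.
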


The reader can find two different balanced optimal 3-divisions of an equilateral triangle for this problem  in~\cite[Fig.~1]{bb}. An intriguing open question in this setting is investigating whether any optimal division is necessarily balanced, as it  happens for the min-Max problem for the width (recall Theorem~\ref{prop:mMwov}).

\begin{remark}
For a given convex body $C$, if we set $I_n(C)=\widetilde{\rho}$,  
Theorems~\ref{prop:mMwov} and~\ref{prop:bb} yield that
$$
2\widetilde{\rho}=w_n(C^{\widetilde{\rho}}).$$
In particular, if $C$ is \emph{rounded enough}, the optimal values for the min-Max problems for the width and the inradius will  coincide, up to a constant. However, we point out that the optimal divisions in these two situations
will differ, in general: for an equilateral triangle $\mathcal{T}$, the aforementioned~\cite[Fig.~1(b)]{bb} shows an optimal 3-division of $\mathcal{T}$
for the inradius (and so, also optimal for the $I(\mathcal{T})$-rounded body $\mathcal{T}^{I(\mathcal{T})}$  of $\mathcal{T})$, which is not optimal for $\mathcal{T}^{I(\mathcal{T})}$
when considering the width (in fact, such a 3-division of $\mathcal{T}^{I(\mathcal{T})}$ is not even balanced for the width).
\end{remark}

Theorem~\ref{prop:desi} gives an explicit lower bound for $I_n(C)$
in terms of the inradius of the considered convex body $C$, by using the following result due to V.~Kadets
(which is, in some sense, analogous to Lemma~\ref{le:bang} by Bang, and originally stated in a more general context).

\begin{lemma}(\cite[Th.~2.1]{kadets})
\label{le:kadets}
Let $C$ be a convex body in $\rr^d$, and let $P$ be an $n$-division of $C$ into subsets $C_1,\ldots,C_n$. Then,
$\displaystyle{I(C)\leq\sum_{i=1}^n I(C_i)}.$
\end{lemma}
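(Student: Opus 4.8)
The plan is to prove the inequality by induction on $n$, using the fact that every $n$-division is obtained by $n-1$ \emph{successive} hyperplane cuts; in this way the whole argument reduces to the case of a single cut, that is, to $n=2$. (Alternatively, one may simply invoke \cite[Th.~2.1]{kadets}, where the inequality is established for arbitrary coverings of a convex body by convex sets in a normed space; the elementary argument sketched here makes essential use of the fact that the pieces of $P$ are cut off by hyperplanes.)

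For $n=2$, write $C=C_1\cup C_2$, where $C_1,C_2$ are the intersections of $C$ with the two closed halfspaces bounded by the cutting hyperplane $H$, and let $u$ be the unit normal of $H$ pointing into $C_1$. I would fix a ball $B=B(p,r)$ of radius $r=I(C)$ contained in $C$, and let $d$ denote the signed distance from $p$ to $H$, normalised so that $d\geq0$ precisely when $p$ lies on the $C_1$-side (after relabelling $C_1,C_2$ we may assume $d\geq0$). If $d\geq r$ then $B\subseteq C_1$ and there is nothing to prove, so suppose $0\leq d<r$. Then the ball with centre $p+\frac{r-d}{2}u$ and radius $\frac{r+d}{2}$ is contained in $B$ (its centre has moved a distance $\frac{r-d}{2}$ and its radius has decreased by the same amount) and is tangent to $H$ from the $C_1$-side, hence is contained in $C_1$; symmetrically, the ball with centre $p-\frac{r+d}{2}u$ and radius $\frac{r-d}{2}$ is contained in $C_2$. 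Consequently $I(C_1)+I(C_2)\geq\frac{r+d}{2}+\frac{r-d}{2}=r=I(C)$.

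For the inductive step, fix $n\geq3$, assume the inequality for every division into fewer than $n$ subsets, and use Remark~\ref{re:dos}: the first cut of $P$ divides $C$ into two convex bodies $E_1,E_2$ along a hyperplane, and the remaining cuts induce an $n_i$-division of $E_i$ with $n_1+n_2=n$ and $1\leq n_i\leq n-1$. The induction hypothesis gives $I(E_i)\leq\sum_{C_j\subseteq E_i}I(C_j)$ for $i=1,2$, while the case $n=2$ gives $I(C)\leq I(E_1)+I(E_2)$; adding these yields $I(C)\leq\sum_{j=1}^{n}I(C_j)$. The argument is essentially routine; the only points requiring some care are the degenerate situation in which the largest inscribed ball of $C$ lies entirely on one side of the cutting hyperplane (covered above by the case $d\geq r$) and, in the inductive step, checking that the cuts of $P$ carried out inside $E_i$ really do constitute a valid $n_i$-division of $E_i$, which is exactly what the successive-hyperplane-cut structure guarantees.
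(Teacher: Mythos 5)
Your proof is correct, but it takes a genuinely different route from the paper: the paper gives no argument at all for this lemma, simply citing Kadets's theorem, which establishes the inequality in the much more general setting of an arbitrary covering of a convex body by convex sets in a normed space. Your argument is elementary and self-contained, and it works precisely because you exploit the restricted structure of $n$-divisions: the reduction to $n=2$ via the first cut of the successive-hyperplane-cut tree is legitimate (each later cut subdivides a piece lying entirely inside $E_1$ or $E_2$, so the induced $n_i$-divisions are genuine divisions, and the case $n_i=1$ is the trivial division), and the base case is clean --- the two shifted balls $B\bigl(p+\tfrac{r-d}{2}u,\tfrac{r+d}{2}\bigr)$ and $B\bigl(p-\tfrac{r+d}{2}u,\tfrac{r-d}{2}\bigr)$ are indeed contained in $B(p,r)$ and tangent to $H$ from the correct sides, giving $I(C_1)+I(C_2)\geq r$. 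What the citation buys is generality (it would still apply if the paper ever needed coverings not arising from hyperplane cuts); what your argument buys is a short, transparent proof covering exactly the case the paper actually uses. Either is acceptable here.
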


\begin{theorem}
\label{prop:desi}
Let $C$ be a convex body in $\rr^d$. 
Then,
\begin{equation}
\label{eq:desi}
I_n(C)\geq I(C)/n.
\end{equation}
\end{theorem}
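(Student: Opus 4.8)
The plan is to derive the bound directly from Lemma~\ref{le:kadets} (Kadets' inequality) together with the definition of $I_n(C)$ as an infimum. The statement is essentially an immediate corollary once one has the right subadditivity-type inequality, so the proof should be very short.

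First I would take an arbitrary $n$-division $P$ of $C$ into subsets $C_1,\ldots,C_n$. Applying Lemma~\ref{le:kadets} gives
\[
I(C)\leq\sum_{i=1}^n I(C_i)\leq n\,\max\{I(C_1),\ldots,I(C_n)\}=n\,I(P).
\]
Hence $I(P)\geq I(C)/n$ for every $n$-division $P$. Taking the infimum over all $n$-divisions on the left-hand side yields $I_n(C)\geq I(C)/n$, which is exactly~\eqref{eq:desi}.

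That is the whole argument. There is no real obstacle here: the only thing being used beyond elementary bookkeeping is Kadets' inequality, which is quoted as Lemma~\ref{le:kadets}, and the fact that a maximum of $n$ nonnegative numbers is at least $1/n$ times their sum. If one wanted to be slightly more careful, one could note that by Lemma~\ref{le:cotatrivial} the infimum defining $I_n(C)$ is attained by some optimal division (or approached by a sequence of divisions), but this is not even necessary since the inequality $I(P)\geq I(C)/n$ holds uniformly in $P$ and therefore passes to the infimum. The main point worth remarking, which presumably the authors do in the surrounding text, is that this lower bound is sharp — equality can be approached (for instance by suitably elongated bodies, in the spirit of Remark~\ref{re:almostsharp}) — so that no better universal bound in terms of $I(C)$ alone is possible, even though the exact value of $I_n(C)$ is governed by the rounded-body description of Theorem~\ref{prop:bb}.
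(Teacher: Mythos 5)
Your argument is correct and is essentially identical to the paper's own proof: both apply Kadets' inequality (Lemma~\ref{le:kadets}) to an arbitrary $n$-division, bound the sum by $n\,I(P)$, and pass to the infimum. (The only minor slip is in your closing aside: sharpness of~\eqref{eq:desi} is discussed in the remark following the theorem, via an inball touching $\partial C$ at exactly two antipodal points, not via Remark~\ref{re:almostsharp}, which concerns the diameter problem.)
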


\begin{proof}
Let $P$ be an  $n$-division of $C$ with subsets $C_1,\ldots, C_n$.
Lemma~\ref{le:kadets} implies that $$\displaystyle{I(C)\leq\sum_{i=1}^n I(C_i)\leq n\,I(P)},$$
and so $I(P)\geq I(C)/n$. Thus, $I_n(C)\geq I(C)/n$.
\end{proof}

\begin{remark}
Inequality~\eqref{eq:desi} turns into an equality when, for instance, an inball $B$ of $C$ touches $\ptl C$ at exactly two points
(in that case, those two points will be necessarily antipodal in $B$, and one can construct an optimal $n$-division by means of $n-1$ appropriate parallel equispaced hyperplanes).   
\end{remark}

\subsubsection{Algorithm for the optimal value of convex polygons}
\label{subsec:mMi-algorithm}
As commented previously, the  min-Max problem for the inradius has been studied in~\cite{bb}, and the corresponding optimal value is  given  theoretically in Theorem~\ref{prop:bb}, by means of a certain rounded body associated to the original set. However, identifying that specific rounded body  constitutes a hard task in general, since we do not have a systematic method for that purpose. We will now describe a constructive procedure which will lead us to the optimal value for this problem when considering an arbitrary convex polygon. We anticipate that our algorithm turns out to be of \emph{quadratic order} with respect to the number of sides of the polygon.
Throughout this Subsection, $d$ will stand for the Euclidean planar distance.

We need some preliminary results. It is well known that the width of a convex polygon is given by a slab delimited by two supporting lines, with at least one of them containing a side of the polygon. Lemma~\ref{le:sup-pol} below  shows that this property extends to any rounded body associated to a convex polygon. 

\begin{lemma} \label{le:sup-pol}
Let $C$ be a convex polygon, and let $0<\rho\leq I(C)$. Then, there exists a slab 
providing $w(C^{\rho})$, such that one side of $C$ is contained in the boundary of the slab. 
\end{lemma}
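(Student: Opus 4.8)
The plan is to understand the geometry of the $\rho$-rounded body $C^\rho$ of a convex polygon $C$, and then locate where a width-realizing slab must be anchored. First I would recall the structure of $C^\rho$: since $C$ is a convex polygon with sides $e_1,\dots,e_m$ and outer normals $\nu_1,\dots,\nu_m$, a ball of radius $\rho$ fits in $C$ precisely when its center lies in the \emph{inner parallel body} $C_{-\rho}=\{x\in C: d(x,\partial C)\ge\rho\}$, which is itself a convex polygon (possibly degenerate) whose sides are parallel to the corresponding sides of $C$, obtained by pushing each side $e_i$ inward by $\rho$. Consequently $C^\rho=C_{-\rho}\oplus \rho B$ (Minkowski sum with the closed $\rho$-ball): its boundary consists of straight segments parallel to the sides of $C$ (the translates of the sides of $C_{-\rho}$, which in fact lie \emph{on} the corresponding sides of $C$ when those sides are not ``eaten away'') together with circular arcs of radius $\rho$ centered at the vertices of $C_{-\rho}$, smoothing the corners. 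The key qualitative point is that every flat piece of $\partial C^\rho$ is parallel to a side of $C$, and every non-flat piece is a circular arc of radius exactly $\rho$.

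Next I would analyze a slab of minimal width for $C^\rho$. A slab $B$ realizing $w(C^\rho)$ is bounded by two parallel supporting lines $\ell,\ell'$ of $C^\rho$; standard convexity (as in the polygon case) shows we may assume the support set of $C^\rho$ on at least one of these lines, say $\ell$, is not a single point — otherwise one can rotate the slab slightly while the width is non-increasing until contact becomes non-trivial on one side. Here is where the two cases split: the support set of $C^\rho$ on $\ell$ is either (i) a segment, which must then be one of the flat pieces of $\partial C^\rho$, hence parallel to — and in fact lying in the line of — some side $e_i$ of $C$; or (ii) a single point lying on a circular arc of $\partial C^\rho$. In case (i) we are done after observing that the flat piece in question is contained in the side $e_i$ itself (the inner-parallel construction keeps those boundary segments on the original sides), so $e_i\subset\partial B$. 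The substantive work is to rule out, or rather circumvent, case (ii).

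In case (ii), both $\ell$ and $\ell'$ touch $C^\rho$ at single points on circular arcs. I would argue that then the width direction is not locally rigid: because the two contact points sit on arcs of radius $\rho$, the function giving the breadth of $C^\rho$ in direction $\theta$ is differentiable there, and if its derivative is nonzero we may decrease the width by rotating, contradicting minimality; if the derivative vanishes we have a critical configuration, and I would show that among all width-realizing slabs there is always one that is \emph{not} of this type — e.g. by perturbing and using that the set of directions for which the minimal-width slab is anchored on a flat piece is the generic situation, or more cleanly: the breadth function of $C^\rho$ is $C^{1}$ wherever contact occurs on arcs, and its minimum over the circle, if attained only at arc-anchored configurations, would force the breadth function to be constant on an open set of directions, meaning $C^\rho$ has constant width there; but then one can still find a supporting line with non-degenerate contact by moving to an endpoint of that arc, where a flat piece begins. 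Combining these, one always obtains a minimal-width slab with a flat piece of $\partial C^\rho$ — hence a side of $C$ — on its boundary.

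I expect the main obstacle to be handling case (ii) rigorously without a long case analysis: one must be careful that ``rotating the slab'' is done in the direction that does not increase the width, and that the limiting argument terminates at a slab genuinely anchored on a side of $C$ rather than merely on a flat piece of $\partial C^\rho$ that has been shrunk to nothing. The cleanest route is probably to first establish the explicit description $C^\rho=C_{-\rho}\oplus\rho B$, note that its flat boundary pieces are exactly the (nonempty) portions of the sides of $C$, and then invoke the elementary fact that a minimal-width slab of any planar convex body may be chosen with the contact set on one side being the full face in a support direction where the breadth function is minimized and the body is not smooth — applying this to $C^\rho$, whose only non-smooth boundary points are the junctions between arcs and the sides of $C$, forces the contact set to lie along a side of $C$.
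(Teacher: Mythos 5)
Your overall strategy is the same as the paper's: describe $\partial C^{\rho}$ as circular arcs of radius $\rho$ alternating with sub-segments of the sides of $C$, dispose of the case where a width-realizing supporting line meets a flat piece (or a junction point, where the unique supporting line contains the adjacent flat piece by tangency), and treat the remaining case --- both contact points interior to arcs --- by a rotation argument. The decomposition $C^{\rho}=C_{-\rho}\oplus\rho B$ and your case (i) are fine. The gap is in case (ii), exactly at the critical configuration. You assert that if the derivative of the breadth function vanishes at the minimizing direction and the minimum is attained only at arc-anchored slabs, then the breadth ``would be constant on an open set of directions.'' That inference is not valid: a $C^{1}$ function can attain its minimum at an isolated critical point, so nothing you have said rules out a strict, arc-anchored minimum. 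Your fallback (``flat anchoring is generic'') is likewise not an argument, since the minimal width is attained only in specific directions.

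What actually closes this case --- and what your proposal never invokes --- is the inequality $\rho\leq I(C^{\rho})\leq w(C^{\rho})/2$, which the paper states at the outset. If the two contact points lie in the interiors of arcs of radius $\rho$ centered at $c_{1}$ and $c_{2}$, then for nearby directions $u_{\theta}$ the breadth equals $\langle c_{1}-c_{2},u_{\theta}\rangle+2\rho=|c_{1}-c_{2}|\cos(\theta-\alpha)+2\rho$. Its value at the minimizing direction is $w(C^{\rho})\geq 2\rho$, so $\cos(\theta_{0}-\alpha)\geq 0$, and on that range the only critical point of the cosine is its strict maximum. Hence either the derivative is nonzero (rotate and strictly decrease the width, contradicting minimality --- the paper's case $d(x_{1},x_{2})>w(C^{\rho})$), or $c_{1}=c_{2}$, so the two arcs lie on a single circle of radius $\rho=w(C^{\rho})/2$ and the breadth really is locally constant; only then does your ``rotate until a flat piece begins'' step (the paper's equality case) apply. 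In short, the local constancy you want in the critical case is true, but only because $\rho\leq w(C^{\rho})/2$ forces the arc centers to coincide there; as written, your argument skips precisely this point, which is the substance of the lemma.
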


\begin{proof}
Recall that $\rho \leq I(C^{\rho}) \leq w(C^{\rho})/2$, and that any tangent line $r$ to $C^{\rho}$ will leave $C^\rho$ entirely contained in one of the half-planes defined by $r$.
Furthermore, $C^{\rho}$ will be composed in this case by some circular arcs (of radius $\rho$) and some segments (which are pieces of sides of $C$).
Let $B$ be a slab determining $w(C^\rho)$, delimited by two parallel supporting lines $h_1$, $h_2$. Let $x_i$ be a  contact point of $h_i\cap C^{\rho}$, and assume that $x_i$ lies in an arc $\sigma_i$ of  $C^{\rho}$, for $i=1,2$ (otherwise, the statement trivially follows).

If $d(x_1,x_2)=w(C^\rho)$,
let $B'$ be the ball of radius $w(C^{\rho})/2$ passing through $x_1$, $x_2$. Since $\rho\leq w(C^{\rho})/2$, we have two possibilities. In the case of equality, then $\sigma_1$, $\sigma_2$ lie in $\ptl B'$ and we can rotate the lines $h_1$, $h_2$ tangentially along $\ptl B'$ (preserving their parallel character),  obtaining new slabs which also provide $w(C^{\rho})$, until one of the rotating lines meets a segment of $C^{\rho}$, and so we are done.
On the other hand, in the case of strict inequality, we can proceed analogously: rotate  $h_i$ along $\sigma_i$ (which is now strictly contained in $B'$) to obtain slabs containing $C^{\rho}$ whose width is smaller than $w(C^{\rho})$, yielding a   contradiction, see Figure~\ref{rounded2}.
\begin{figure}[ht]
\begin{center}
\includegraphics[width=0.8\textwidth]{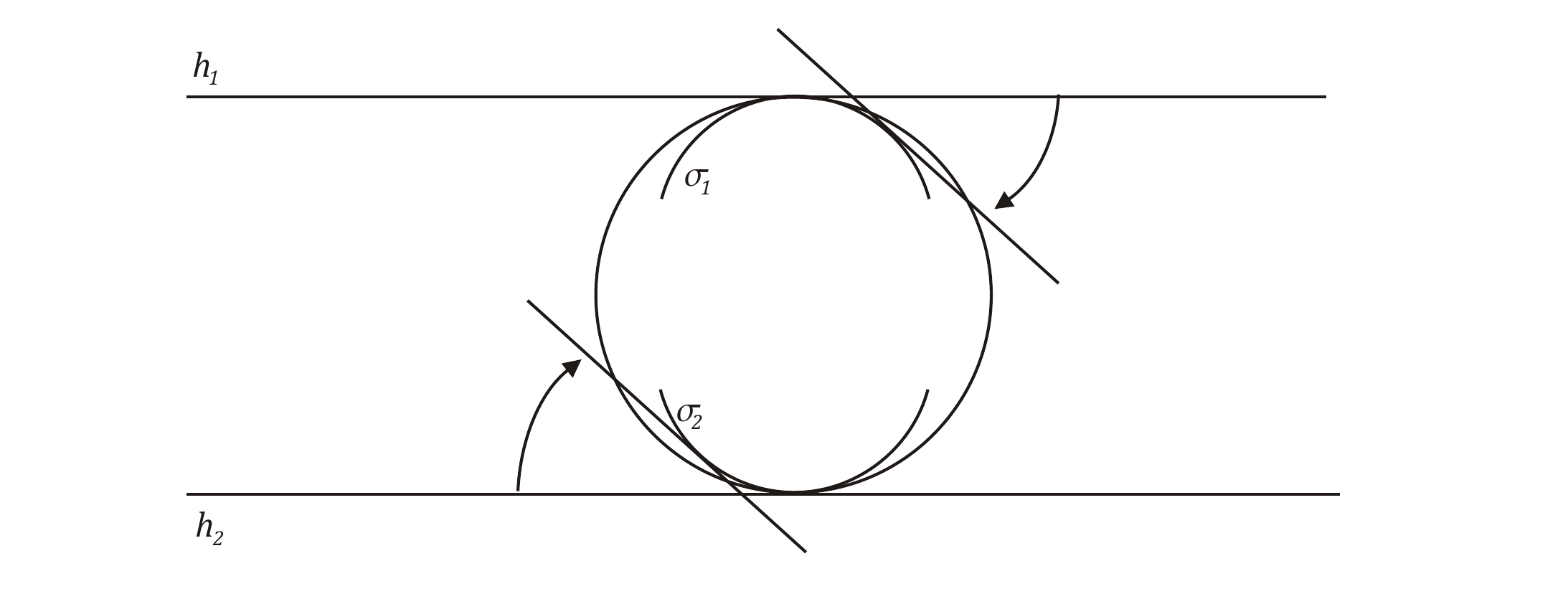}
\end{center}
\caption{If $\rho<w(C^\rho)/2$, a rotation argument yields a slab containing $C^\rho$, with width strictly smaller than $w(C^\rho)$} \label{rounded2}
\end{figure}

If $d(x_1,x_2)>w(C^\rho)$, we can apply a similar reasoning: by rotating $h_i$ along $\sigma_i$ as shown in Figure~\ref{rounded1}, it can be checked that the resulting slabs (which also contain $C^\rho$) have width smaller than $w(C^{\rho})$, which also gives a contradiction.
\begin{figure}[ht]
\begin{center}
\includegraphics[width=0.78\textwidth]{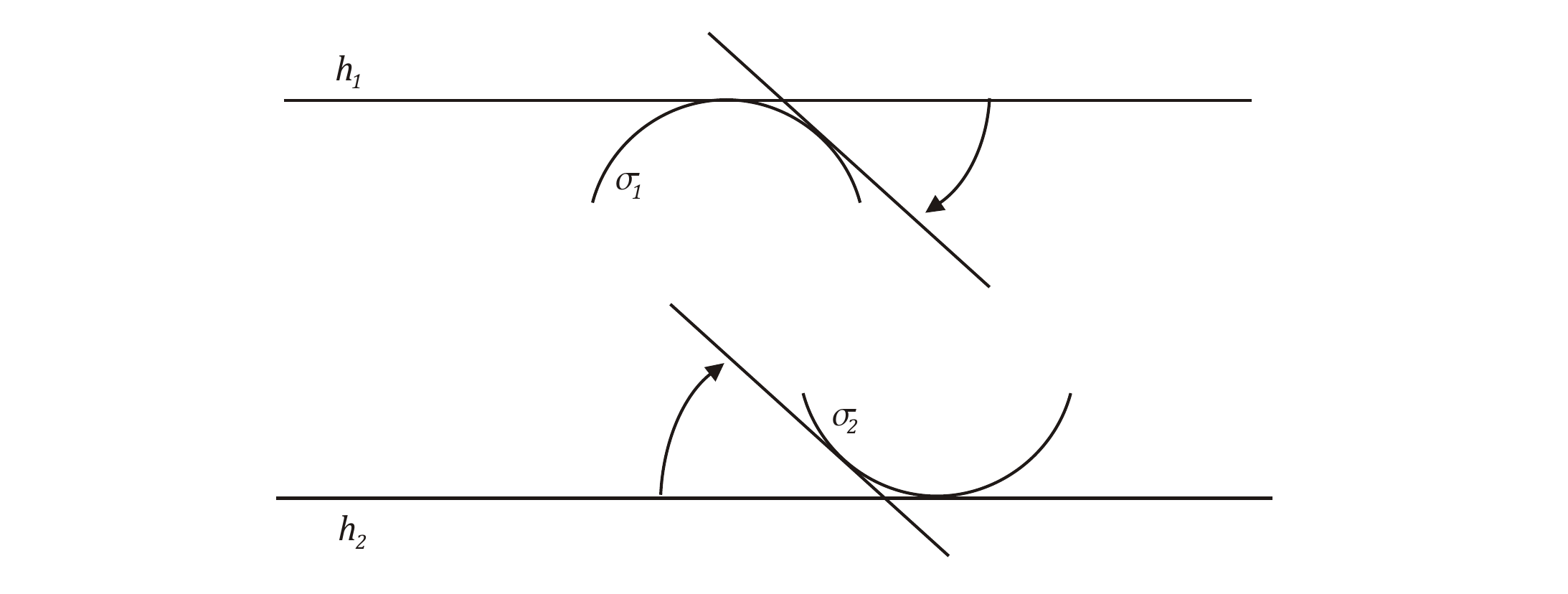}
\end{center}
\caption{If $d(x_1,x_2)=w(C^\rho)$, an analogous rotation argument leads to a slab containing $C^\rho$, whose width is strictly smaller than $w(C^\rho)$} \label{rounded1}
\end{figure}
\end{proof}

In view of Theorem~\ref{prop:bb} and Lemma~\ref{le:sup-pol}, in order to obtain the optimal value of a convex polygon $C$ for this problem, it seems reasonable focusing on each side $L$ of $C$, trying to find the values $\rho>0$ such that $w_L(C^\rho)=2n\rho$, where $w_\sigma(A)$ stands for the width of a planar body $A$ when considering slabs parallel to the direction determined by the line $\sigma$ ($w_\sigma(A)$  will be referred to as the \emph{relative width} of $A$ with respect to $\sigma$).
It follows that one of those values will be the desired optimal value. This approach will require some new definitions.

Let $C$ be a convex polygon. The \emph{medial axis} $M(C)$ of $C$ is defined as the set of points of $C$ which have more than one closest side of $C$, see Figure~\ref{fig:medialaxis}.
\begin{figure}[ht]
    \centering
    \includegraphics[width=0.3\textwidth]{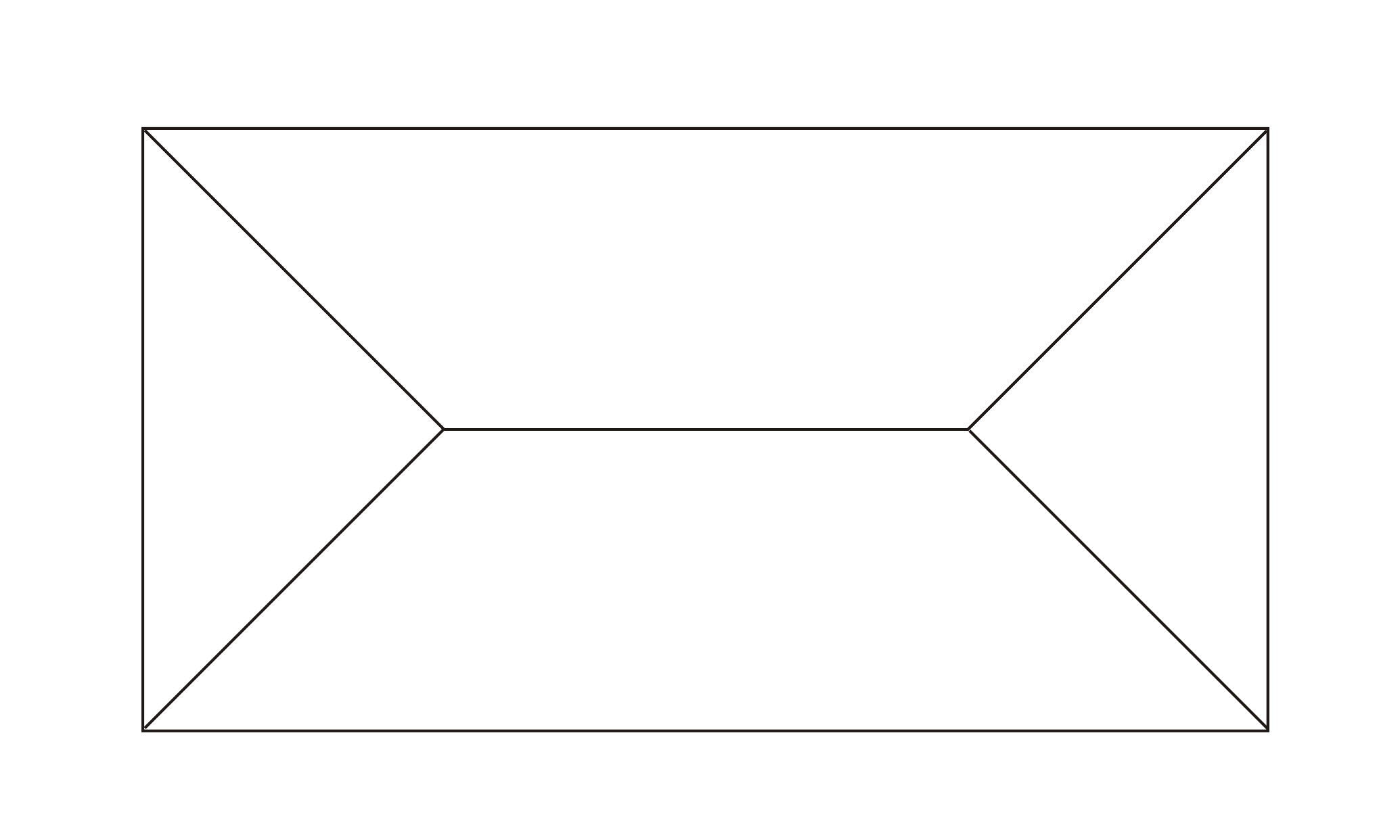}
    \caption{The medial axis of a rectangle}
    \label{fig:medialaxis}
\end{figure}
Equivalently, $M(C)$ is the boundary of the Voronoi diagram associated to $\ptl C$,  and so it will be composed by line segments (it is, in fact, a tree-like graph), see~\cite{preparata, csw}. We point out that each segment of $M(C)$ will be associated to a pair of sides of $C$, and from the computational point of view, it is known that $M(C)$ can be computed in linear time with respect to the number of sides of $C$~\cite[Co.~4.5]{csw}.
Moreover, recall that for any $0<\rho\leq I(C)$, the $\rho$-rounded body $C^\rho$ associated to $C$ will be bounded by some circular arcs (of radius $\rho$,  and with centers lying in $M(C)$) and some segments (each of them contained in a side of $C$), by construction. 

On the other hand, fix a side $L$ of  $C$, and let $L'$ be the supporting line of $C$, parallel to $L$, bounding the slab which provides $w_L(C)$. Any vertex of $C$ contained in $L'$ will be called an \emph{antipodal vertex} to $L$. Note that we can have, at most, two antipodal vertices to $L$, and each of them will have two incident sides of $C$. For each pair of these sides (we can have, at most,  three pairs), the corresponding segment of the medial axis $M(C)$ will be called an  \emph{antipodal segment} to $L$, see Figure~\ref{fig:antipodal}.

\begin{figure}[ht]
    \centering
    \includegraphics[width=0.81\textwidth]{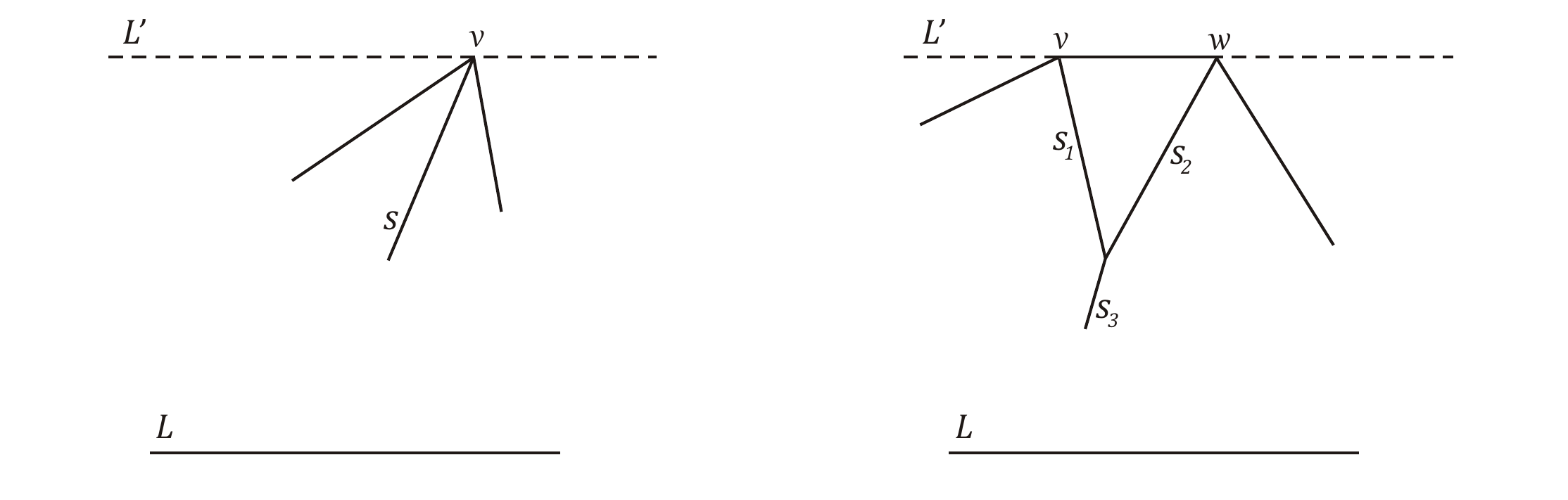}
    \caption{$v$ is an antipodal vertex and $s$ is an antipodal segment to $L$ in the left-hand side figure, and $v$, $w$ are antipodal vertices and $s_1$, $s_2$, $s_3$ are antipodal segments to $L$ in the right-hand side figure}
    \label{fig:antipodal}
\end{figure}

We can now prove the following results on the relative width of a rounded body associated to a convex polygon.

\begin{lemma}
\label{le:alberto}
Let $C$ be a convex polygon, and let $L$ be a side of $C$.
Then, there exists a unique value $\rho>0$ such that $w_L(C^\rho)=2n\rho$, where $n$ is a natural number, $n\geq2$.
\end{lemma}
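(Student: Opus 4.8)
The plan is to fix the side $L$ and study the function $\rho \mapsto g(\rho) := w_L(C^\rho) - 2n\rho$ on the interval $[0, I(C)]$, showing it is continuous and strictly decreasing, with $g(0) > 0$ and $g(I(C)) < 0$; the intermediate value theorem and strict monotonicity then deliver the unique root. The endpoint inequalities are the easy part: at $\rho = 0$ we have $C^0 = C$, so $g(0) = w_L(C) > 0$; at $\rho = I(C)$ the rounded body $C^{I(C)}$ has relative width $w_L(C^{I(C)}) \le w(C^{I(C)}) \le 2 I(C^{I(C)}) \le 2 I(C) < 2n\, I(C)$ since $n \ge 2$, so $g(I(C)) < 0$. (One should also note $w_L(C^\rho) \ge 2\rho$ always, since $C^\rho$ contains a ball of radius $\rho$, but this is only needed to see the behaviour is genuinely a crossing, not for the existence itself.)

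The substance is the claim that $g$ is continuous and \emph{strictly} decreasing in $\rho$. For continuity, one uses that $C^\rho$ varies continuously (say, in Hausdorff distance) with $\rho$ — which follows directly from the definition of $C^\rho$ as the union of all radius-$\rho$ balls contained in $C$ — and that $w_L$ is continuous with respect to Hausdorff convergence. The term $-2n\rho$ is obviously continuous. For strict monotonicity, I would exploit the explicit description of the boundary of $C^\rho$ recalled just before the lemma: $\partial C^\rho$ consists of segments lying on sides of $C$ together with circular arcs of radius $\rho$ whose centres lie on the medial axis $M(C)$. Fixing the direction orthogonal to $L$, the relative width $w_L(C^\rho)$ is the distance between the two supporting lines of $C^\rho$ perpendicular to this direction. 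The supporting line on the $L$-side is just $L$ itself (for all $\rho$, since $L \subset \partial C^\rho$ whenever $\rho \le I(C)$, or at worst a parallel line through the relevant segment), so $w_L(C^\rho)$ equals the $L$-height of the point of $C^\rho$ farthest from $L$. That farthest point lies either at an antipodal vertex $v$ of $C$ (if the disk of radius $\rho$ centred suitably still reaches past $v$ — but $v \notin C^\rho$ once $\rho>0$, so in fact the extreme point sits on an arc) or, more precisely, on one of the antipodal arcs, whose centre $c(\rho)$ moves along an antipodal segment of $M(C)$ as $\rho$ grows. Along such a segment, the centre recedes from $L$ at a rate strictly less than $1$ per unit increase of $\rho$ (the medial-axis segment bisecting the two sides incident to $v$ makes a positive angle with the perpendicular to $L$, since those two sides are not both parallel to $L$), so the farthest point of the arc, at $L$-height $(\text{height of } c(\rho)) + \rho \cdot(\text{something} < 1)$ after accounting for the geometry, increases strictly slower than $\rho$. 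Hence $w_L(C^\rho) - 2n\rho$ is strictly decreasing; it is cleanest to argue $w_L(C^\rho)$ itself is non-decreasing with slope strictly less than $2$, so subtracting $2n\rho$ with $n \ge 2$ gives strict decrease. One must handle the finitely many $\rho$ at which the identity of the relevant antipodal arc changes, i.e. where $c(\rho)$ passes a vertex of $M(C)$; at such points $g$ is still continuous and the one-sided slopes remain controlled, so the piecewise argument patches together.

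I expect the main obstacle to be making the strict-monotonicity argument fully rigorous across the whole interval, since the combinatorial type of $\partial C^\rho$ (which arcs are present, which one achieves the $L$-extremal point, which medial-axis segment the active centre lies on) changes at finitely many breakpoint values of $\rho$. The clean way to organize this is: (i) partition $(0, I(C)]$ into finitely many subintervals on each of which $\partial C^\rho$ has constant combinatorial structure; (ii) on each such subinterval write $w_L(C^\rho)$ as an explicit smooth function of $\rho$ — essentially an affine function, being the $L$-coordinate of a medial-axis centre plus $\rho$ times a fixed trigonometric factor strictly between $-1$ and $1$ — and check its derivative is strictly less than $2$; (iii) invoke continuity of $w_L(C^\rho)$ in $\rho$ (from Hausdorff continuity of $\rho \mapsto C^\rho$) to conclude $w_L(C^\rho)$ is globally continuous and piecewise-affine with all slopes $< 2$, hence $g(\rho)$ is continuous and strictly decreasing. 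With $g(0) > 0 > g(I(C))$ this yields exactly one root, proving the lemma. I would lean on Lemma~\ref{le:sup-pol} to justify that it suffices, when computing $w_L(C^\rho)$, to look at slabs one of whose boundary lines contains a side of $C$ — here that side being $L$ — which is what pins the near supporting line down and reduces the computation to tracking the single far extreme point along antipodal arcs.
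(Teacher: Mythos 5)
Your overall strategy --- study $g(\rho)=w_L(C^\rho)-2n\rho$ on $[0,I(C)]$, check signs at the endpoints, establish monotonicity, and invoke the intermediate value theorem --- is exactly the paper's. The problem is the endpoint computation at $\rho=I(C)$: you write $w_L(C^{I(C)})\le w(C^{I(C)})$, and this inequality is backwards. The minimal width is the \emph{minimum} of the relative widths over all directions, so $w(A)\le w_\sigma(A)$ for every $\sigma$, and $w_L$ can be much larger than $w$. Hence your chain does not establish $g(I(C))<0$. Moreover, this is not a repairable slip for an arbitrary side $L$: for a long rectangle $[0,\ell]\times[0,1]$ with $L$ a short side, one has $w_L(C^\rho)=\ell$ for every $\rho\in(0,1/2]$ while $2n\rho\le n$, so for $\ell>n$ the equation $w_L(C^\rho)=2n\rho$ has no solution at all. (For what it is worth, the paper's own proof asserts $w_L(C^{\rho_2})=2\rho_2$ at $\rho_2=I(C)$, which conflates $w_L$ with $w$ in the same way --- the identity $w(C^{I(C)})=2I(C)$ holds for the global minimal width, not for every relative width --- so this endpoint is a genuine weak spot of the lemma as stated, and your attempt does not get past it either.) Your intermediate step $w(C^{I(C)})\le 2I(C^{I(C)})$ is also the reverse of the generally valid $2I\le w$ and is asserted without justification; it does hold here, but only because the set of incenters of $C$ has empty interior.

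On monotonicity you make the argument much harder than necessary and get the direction of variation wrong: you describe $w_L(C^\rho)$ as ``non-decreasing'' and speak of the arc centre ``receding from $L$'', whereas $C^{\rho'}\subseteq C^{\rho}$ for $\rho'\ge\rho$, so $w_L(C^\rho)$ is non-increasing and the relevant arc centres move \emph{towards} $L$ as $\rho$ grows. The paper needs only this set-inclusion monotonicity: $\rho\mapsto w_L(C^\rho)$ is continuous and non-increasing while $\rho\mapsto 2n\rho$ is strictly increasing, so the two graphs cross at most once; no slope estimate is required. The piecewise-affine structure you sketch in steps (i)--(iii) is the content of the paper's separate Lemma~\ref{le:alberto2}, which is used for the complexity analysis of the algorithm rather than for the existence and uniqueness claimed here.
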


\begin{proof}
Observe that the function $\rho\mapsto 2n\rho$ is linear and strictly increasing, for $0\leq\rho\leq I(C)$.
On the other hand, the function $\rho\mapsto w_L(C^\rho)$ is continuous, and decreasing by the set inclusion property for rounded bodies (see also Lemma~\ref{le:alberto2}).
Moreover, for $\rho_1=0$, it follows that $w_L(C^{\rho_1})=w_L(C)>0$, and for $\rho_2=I(C)$, we have that  $w_L(C^{\rho_2})=2\rho_2<2n\rho_2$. This implies that the graphs of both functions necessarily intersect only once, yielding the statement. \end{proof}

\begin{lemma}
\label{le:alberto2}
Let $C$ be a convex polygon, and let $L$ be a side of $C$. Then, the function $\rho\mapsto w_L(C^\rho)$ is piecewise affine,  for $0<\rho\leq I(C)$.
\end{lemma}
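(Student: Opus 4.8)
The plan is to show that the relative width $w_L(C^\rho)$ is, as a function of $\rho$, obtained by measuring the distance between two supporting lines of $C^\rho$ that are parallel to the direction of $L$, and that each of these supporting lines depends affinely on $\rho$ as long as the combinatorial structure of the two boundary features realizing the width does not change. First I would recall, using Lemma~\ref{le:sup-pol}, that one of the two lines delimiting a slab providing $w_L(C^\rho)$ can be taken to contain the side $L$ itself (or rather, the corresponding segment of $\partial C^\rho$ contained in $L$), so that $w_L(C^\rho)$ equals the distance from the line through $L$ to the farthest point of $C^\rho$ in the direction orthogonal to $L$. Thus the quantity to analyze is the support function of $C^\rho$ in the single fixed direction $u$ orthogonal to $L$ (pointing into $C$), namely $h_{C^\rho}(u) + (\text{const})$.

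Next I would describe the boundary of $C^\rho$ explicitly: as already noted in the excerpt, $\partial C^\rho$ consists of segments lying on sides of $C$ together with circular arcs of radius $\rho$ whose centers lie on the medial axis $M(C)$. The point of $C^\rho$ extremal in the direction $u$ is therefore either a point on one of the segments (in which case it lies on a side of $C$, and its $u$-coordinate is a constant independent of $\rho$ — an affine function with zero slope), or it lies on a circular arc. If it lies on an arc centered at a point $c(\rho)$ on a fixed edge $e$ of the medial axis, then the extremal point is $c(\rho) + \rho\,u$, so its $u$-coordinate is $\langle c(\rho),u\rangle + \rho$; since $c(\rho)$ traces out a straight segment of $M(C)$ at unit speed in $\rho$ relative to its two defining sides, $\langle c(\rho),u\rangle$ is an affine function of $\rho$, and hence so is $w_L(C^\rho)$ on that parameter range. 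The key observation is that $M(C)$ has finitely many edges, so there are finitely many candidate affine functions, and $w_L(C^\rho)$ is the maximum (appropriately oriented) of these over the $\rho$-interval where each is active; on each maximal subinterval where a single candidate dominates, $w_L(C^\rho)$ coincides with an affine function. Combined with the continuity already established in Lemma~\ref{le:alberto}, this yields that $\rho\mapsto w_L(C^\rho)$ is piecewise affine with finitely many pieces, the breakpoints occurring precisely when the identity of the dominating arc (equivalently, the active medial-axis edge) changes, or when an arc degenerates as $\rho$ reaches a vertex of $M(C)$.

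The main obstacle I expect is bookkeeping rather than conceptual: one must verify carefully that as $\rho$ varies the center $c(\rho)$ of the relevant maximal inscribed disk moves along a \emph{single} segment of the medial axis on each subinterval, and control what happens at the transition values of $\rho$ where $c(\rho)$ passes through a branch vertex of $M(C)$ (so the supporting arc switches from being governed by one pair of sides to another) or where the extremal feature jumps between an arc and a flat segment. It is also worth double-checking the degenerate configuration in which the slab realizing $w_L(C^\rho)$ is supported on \emph{two} arcs rather than on $L$ and an arc, which can occur for small $\rho$; in that case $w_L(C^\rho) = \langle c_2(\rho) - c_1(\rho), u\rangle + 2\rho$ with both centers moving affinely, so affineness on each piece still holds. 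Since all the relevant loci ($M(C)$ and its finitely many edges, the finitely many sides of $C$) are determined combinatorially and there are only finitely many of them, only finitely many breakpoints arise, and the piecewise-affine claim follows.
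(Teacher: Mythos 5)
Your proposal is correct, and at the structural level it follows the same route as the paper: both arguments reduce the claim to the fact that, on each of finitely many $\rho$-intervals, the boundary feature of $C^\rho$ realizing the far side of the slab is a circular arc whose center moves along a single fixed edge of the medial axis $M(C)$, so that the tangent line parallel to $L$ moves affinely in $\rho$. The difference is in how the affineness of each piece is verified: the paper carries out an explicit similar-triangles computation with the points $A$, $A'$, $A''$ and the intersection $O$ of the extensions of $L$ and the medial-axis segment, expressing $w_L(C^\rho)=d(A,A'')+d(A,A')$ as an affine function of $d(A,A')=\rho$; you instead write the extremal point as $c(\rho)+\rho\,u$ and use linearity of $\rho\mapsto c(\rho)$ along a bisector segment together with linearity of $\langle\cdot,u\rangle$, which is cleaner and works verbatim for any fixed direction. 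Your explicit treatment of the configuration in which the segment of $\partial C^\rho$ lying on $L$ has disappeared and both bounding lines of the slab touch arcs is a genuine plus (the paper only handles this implicitly, by ``extending \dots some sides of the polygon if necessary''), but note that this degeneration occurs for $\rho$ near $I(C)$, not for small $\rho$ as you wrote: for small $\rho$ a portion of $L$ always survives in $\partial C^\rho$. Also, Lemma~\ref{le:sup-pol} concerns the global width $w(C^\rho)$ rather than the relative width $w_L(C^\rho)$, so it is not quite the right citation for taking $L$ as one face of the slab; the fact you actually need (the line through $L$ supports $C^\rho$ whenever $\partial C^\rho$ meets $L$) is elementary and is effectively supplied by your own case analysis. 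Finally, the paper's separate case of two antipodal vertices (where the function starts out constant) is subsumed in your framework as the zero-slope piece arising when the extremal feature is a flat segment of $\partial C^\rho$.
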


\begin{proof}
Assume firstly that $L$ has only one antipodal vertex $O'$, belonging to sides $L_1$ and $L_2$, and call $s$ to the antipodal segment to $L$ (which is a segment of the medial axis $M(C)$ of $C$). Recall that, for any $\rho>0$, $C^\rho$ is composed by some segments and some circular arcs, whose centers lie in $M(C)$. 
For the computation of $w_L(C^\rho)$ for small $\rho$, we only have to focus on $L$ and on the circular arc $\alpha$ of radius $\rho$ centered at a certain point of $s$ (note that $w_L(C^\rho)$ will be given  by the slab delimited by $L$ and $L_\rho$, being $L_\rho$ the tangent line to $\alpha$ parallel to $L$, see Figure~\ref{fig:alberto1}).
\begin{figure}[ht]
    \centering
    \includegraphics[width=0.38\textwidth]{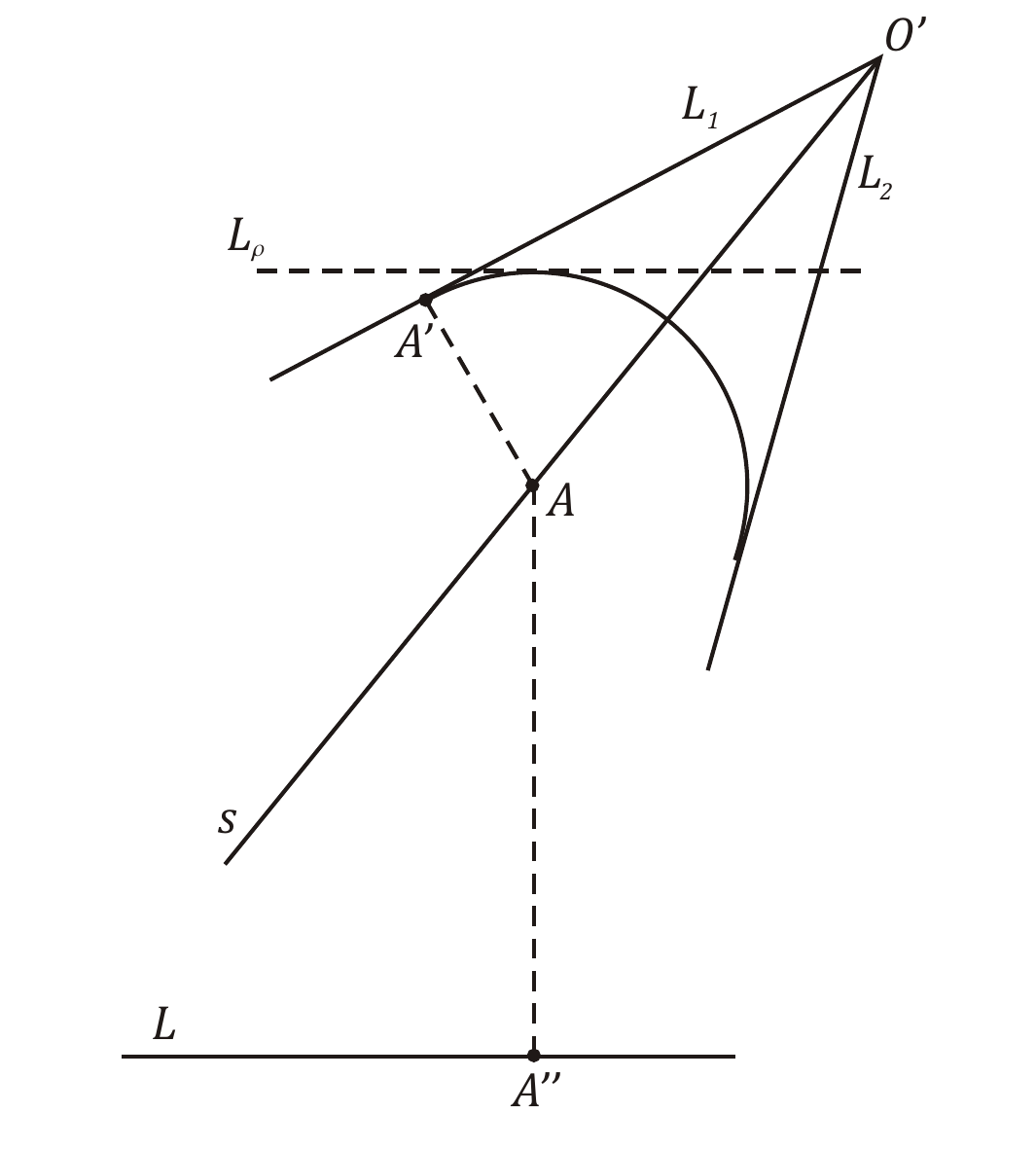}
    \caption{Computing $w_L(C^\rho)$}
    \label{fig:alberto1}
\end{figure}
Taking this into account, an expression for $w_L(C^\rho)$ can be obtained by using the following geometric approach.

Let $A$ be a point in $s$. It is clear that it will  determine a unique rounded body $C^\rho$ associated to $C$, where $\rho=d(A,L_1)$.
Call $A'$ to the point in $L_1$ such that $d(A,L_1)=d(A,A')$, and call $A''$ to the orthogonal projection of $A$ on $L$, as Figure~\ref{fig:alberto1} shows.

Then,
\begin{equation}
\label{eq:alberto}
w_L(C^\rho)=d(A'',L_\rho)=d(A,A'')+\rho=d(A,A'')+d(A,A').
\end{equation}
We are going to see that~\eqref{eq:alberto} can be written as an affine expression with respect to $d(A,A')$.

Let $O$ be the intersection of the extensions of $s$ and $L$, and fix a point $B$ in $s$ (different from $A$). By  considering the previous geometric construction for $B$, with two  associated points $B'$ and $B''$, it can be checked that
$$
\frac{d(O',A)}{d(O',B)}=\frac{d(O',A')}{d(O',B')}=\frac{d(A,A')}{d(B,B')}
$$
and
$$
\frac{d(O,A)}{d(O,B)}=\frac{d(O,A'')}{d(O,B'')}=\frac{d(A,A'')}{d(B,B''),}
$$
since the corresponding pairs of triangles are equivalent. Therefore,
$d(A,A'')=\lambda\,(d(O,O')-d(O',A)$ and $d(A,A')=\mu\,d(O',A)$, for certain $\lambda$, $\mu>0$, which implies that
\begin{equation}
\label{eq:alberto2}
d(A,A'')+d(A,A')=(\mu-\lambda)\,d(O',A)+\lambda\,d(O,O'),
\end{equation}
which is an affine expression with respect to  $d(O',A)$. Finally, since $d(O',A)=d(A,A')/\mu$, it turns that~\eqref{eq:alberto2} is an affine expression with respect to $d(A,A')$.
The conclusion is that the value $w_L(C^\rho)$ can be expressed as an affine function on $\rho$, as desired.

The computation of $w_L(C^\rho)$ for larger $\rho$ will continue by considering the segments of $M(C)$ adjacent to the segment $s$ (and possibly the subsequent ones), until $\rho$ goes from zero to $I(C)$. For each of these segments, the corresponding  computation can be done analogously, extending the segments and some sides of the polygon if necessary. Therefore, each considered segment of $M(C)$ will lead to a piece of the function $\rho\mapsto w_L(C^\rho)$ which will be affine, by the former argument, and so, that function will be piecewise affine.

Finally, if $L$ has two antipodal vertices, it can be checked that $\rho\mapsto w_L(C^\rho)$ will be constant at the beginning (because $w_L(C^\rho)=w_L(C)$, for small $\rho>0$), and piecewise affine afterwards by reasoning as in the previous case.
\end{proof}

Taking into account Lemma~\ref{le:alberto},
each side $L$ of the convex polygon $C$ will provide 
a value $\rho_L>0$ satisfying $w_L(C^{\rho_L})=2n\rho_L$.
When applying this to all the sides of $C$,
we will obtain a finite collection $\mathcal{C}$ of such values, which will constitute the candidates for being the optimal value $\widetilde{\rho}$ from Theorem~\ref{prop:bb} (recall that $w(C^{\widetilde{\rho}})=w_L(C^{\widetilde{\rho}})$ for a certain side $L$ of $C$ by Lemma~\ref{le:sup-pol}, which implies that  $\widetilde{\rho}$ must be one of the values in $\mathcal{C}$). Denote by $\rho_1$ the smallest value in $\mathcal{C}$, associated to a side $L_1$ of $C$. We claim that $w(C^{\rho_1})=w_{L_1}(C^{\rho_1})$. On the one hand, $w(C^{\rho_1})\leq w_{L_1}(C^{\rho_1})$ by definition. And on the other hand, by using Lemma~\ref{le:sup-pol} we have that there exists a side $L'$ of $C$ with associated value $\rho'\in\mathcal{C}$ such that
$$
w(C^{\rho_1})=w_{L'}(C^{\rho_1})\geq w_{L'}(C^{\rho'})=2n\rho'\geq 2n\rho_1= w_{L_1}(C^{\rho_1}),
$$
where we are using that $\rho_1\leq\rho'$, and so  $C^{\rho'}\subseteq C^{\rho_1}$. Thus, $w(C^{\rho_1})=w_{L_1}(C^{\rho_1})=2n\rho_1$, and so $\rho_1$ must be the desired optimal value, by the uniqueness property from Theorem~\ref{prop:bb}.

All this can be implemented as an algorithm, which is of  quadratic order with respect to the number of sides of the considered polygon.

\begin{theorem}
\label{prop:mMi-algorithm}
Let $C$ be a convex polygon. Then, the optimal value for the min-Max problem for the inradius when considering $n$-divisions can be computed in quadratic time with respect to the number of sides of $C$.
\end{theorem}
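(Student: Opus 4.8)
The plan is to turn the discussion preceding the statement into an explicit algorithm and to bound its running time. Write $m$ for the number of sides of the convex polygon $C$. The first step is to compute the medial axis $M(C)$: by~\cite[Co.~4.5]{csw} this takes $O(m)$ time, and the output is a tree-like graph with $O(m)$ vertices and segments, each segment carrying the label of the (unordered) pair of sides of $C$ that it is equidistant from.

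Next I would loop over the $m$ sides $L$ of $C$ and, for each of them, compute the unique value $\rho_L>0$ with $w_L(C^{\rho_L})=2n\rho_L$ whose existence is guaranteed by Lemma~\ref{le:alberto}. To do this, first locate the antipodal vertex (or the two antipodal vertices) of $L$, that is, the vertex of $C$ maximizing the distance to the line containing $L$; this costs $O(m)$ per side (and can in fact be amortized to $O(m)$ total by rotating calipers), so $O(m^2)$ overall. The incident sides of an antipodal vertex single out the antipodal segment(s) of $M(C)$, which is where the arc of $C^\rho$ opposite to $L$ is centered for small $\rho$. Starting from an antipodal segment and walking outward along $M(C)$, Lemma~\ref{le:alberto2} (via the similar-triangles computation leading to~\eqref{eq:alberto} and~\eqref{eq:alberto2}) shows that on each visited segment the function $\rho\mapsto w_L(C^\rho)$ is affine with explicitly computable coefficients, valid on an explicitly computable $\rho$-interval. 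Since $\rho\mapsto w_L(C^\rho)$ is continuous and non-increasing while $\rho\mapsto 2n\rho$ is continuous and strictly increasing, on each segment one checks in $O(1)$ time whether the unique intersection point of the two graphs lies in the current $\rho$-interval; as soon as it does, one solves the resulting linear equation and outputs $\rho_L$. Because $M(C)$ has only $O(m)$ segments, this walk terminates after at most $O(m)$ steps, so computing all the $\rho_L$ costs $O(m^2)$.

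Finally, let $\rho_1=\min\{\rho_L : L\text{ a side of }C\}$, found in $O(m)$ additional time, and say $L_1$ attains the minimum. The argument given immediately before the statement shows that $w(C^{\rho_1})=w_{L_1}(C^{\rho_1})=2n\rho_1$, so by the uniqueness in Theorem~\ref{prop:bb} we get $\rho_1=I_n(C)$. Summing the costs, the whole procedure runs in $O(m^2)$ time, which is the assertion; an optimal $n$-division is then obtained for free from Theorem~\ref{prop:bb} by placing $n-1$ equispaced parallel lines across a slab realizing $w(C^{\rho_1})$.

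The step I expect to require the most care is the correctness of the medial-axis walk: one must argue that as $\rho$ increases from $0$ to $I(C)$ the center of the arc of $C^\rho$ determining $w_L(C^\rho)$ genuinely travels along $M(C)$, moving from one segment to an adjacent one, and that the branching (tree) structure of $M(C)$ never forces a segment to be revisited, so that the number of affine pieces of $\rho\mapsto w_L(C^\rho)$ — hence the number of walk steps — is really $O(m)$. The degenerate cases (a side $L$ with two antipodal vertices, for which $\rho\mapsto w_L(C^\rho)$ starts out constant equal to $w_L(C)$, or an arc center reaching a branch vertex of $M(C)$) should be handled within the same scheme using Lemma~\ref{le:alberto2}.
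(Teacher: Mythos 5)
Your proposal is correct and follows essentially the same route as the paper: compute the medial axis in linear time, for each side $L$ trace the piecewise affine function $\rho\mapsto w_L(C^\rho)$ over a linear number of medial-axis segments (Lemma~\ref{le:alberto2}) to solve $w_L(C^\rho)=2n\rho$ in linear time, and take the smallest of the resulting candidates, justified by the argument preceding the theorem together with the uniqueness in Theorem~\ref{prop:bb}. Your write-up is in fact more explicit than the paper's proof about the per-side walk and the point needing care (that the number of affine pieces is $O(m)$), which the paper only asserts via Lemma~\ref{le:alberto2}.
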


\begin{proof}
It can be checked  that all the steps of the preceding procedure can be done, at most, in quadratic time with respect to the number of sides of $C$. From the computational point of view, finding the medial axis of $C$ is a  linear question~\cite[Co.~4.5]{csw}. Moreover, for each side $L$ of $C$, determining the value such that $w_L(C^\rho)=2n\rho$ is also linear, since the graphs of the functions $\rho\mapsto 2n\rho$ and $\rho\mapsto w_L(C^\rho)$ can be depicted  in linear time (for the graph of the second function, we may have to consider a linear quantity of segments of the medial axis of $C$, in view of Lemma~\ref{le:alberto2}).  Therefore, when applying this to all the sides of $C$, it turns out that finding the candidates for being the optimal value (the ones contained in the finite  collection $\mathcal{C}$) will require quadratic time. And finally, notice that choosing the smallest of those candidates can be done in linear time.
\end{proof}

\section{Max-min type problems}
\label{sec:Mm}

In this Section we will treat the corresponding Max-min problems for the diameter, the width and the inradius.
We recall that the optimal value for these problems will be denoted by $\widetilde{F}_n(C)$, where $F$ represents each of those magnitudes.

\subsection{Max-min problem for the diameter}
\label{subsec:MmD}
In this setting, we remark that the optimal value
can be explicitly computed,
being equal to the diameter of the original convex body, as well as the fact that any optimal division is necessarily balanced, see Theorem~\ref{prop:MmDov}.
However, the existence of optimal divisions is not assured in the planar setting
(as we will see in Remark~\ref{re:triangle}), since it depends strongly on the location of the diameter segments of the set,
see Definition~\ref{def:ds} below. 

\begin{definition}
\label{def:ds}
Let $C$ be a convex body in $\rr^d$. 
Any segment in $C$ with length equal to $D(C)$ will be called a \emph{diameter segment} of $C$.
\end{definition}

\begin{remark}
\label{re:types}
In the planar case, it is well-known that any pair of diameter segments of a convex body $C$ will necessarily intersect at one point, and it is not difficult to check that such an intersection point will be either an endpoint of both segments or an inner point of both segments (that is, the intersection point cannot be an endpoint of one segment and an inner point of the other segment). 
This implies that any set $J_C$ of diameter segments of $C$ \emph{with disjoint interiors} will consist of one of these two possibilities: either $J_C$ is an equilateral triangle (and so $J_C$ will be called \emph{triangle-type}), or all the diameter segments of $J_C$ share an endpoint (and so  $J_C$ will be called  \emph{fan-type}). Notice that $C$ can have two  maximal sets $J_C$,  $J'_C$ of diameter segments with disjoint interiors, where $J_C$ is triangle-type and $J'_C$ is fan-type (see Figure \ref{fig:differenttypes}).

\begin{figure}[ht]
    \centering
    \includegraphics[width=0.75\textwidth]{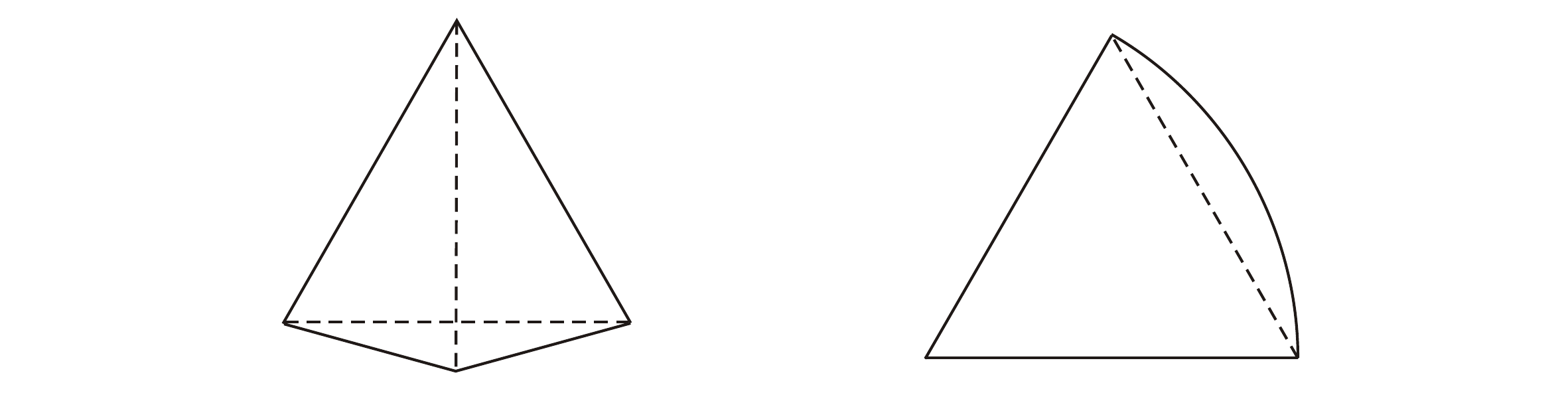}
    \caption{Two  planar convex bodies with different type possibilities for the set of diameter segments with disjoint interiors}
    \label{fig:differenttypes}
\end{figure}

\end{remark}

\begin{theorem}
\label{prop:MmDov} Let $C$ be a convex body in $\rr^d$.
Then, $$\widetilde{D}_n(C)=D(C).$$
Moreover, any optimal $n$-division of $C$ for the Max-min problem for the diameter is balanced.
\end{theorem}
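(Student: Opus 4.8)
The inequality $\widetilde{D}_n(C)\leq D(C)$ is already part of Lemma~\ref{le:cotatrivial}, so the content lies in the reverse inequality and in the balancing statement. I would first observe that the balancing claim is essentially formal: if $P$ is an optimal $n$-division with subsets $C_1,\dots,C_n$, then $\min_i D(C_i)=\widetilde{D}(P)=\widetilde{D}_n(C)=D(C)$ forces $D(C_i)\geq D(C)$ for each $i$, while $C_i\subseteq C$ gives $D(C_i)\leq D(C)$; hence $D(C_1)=\dots=D(C_n)=D(C)$, i.e.\ $P$ is balanced. So the whole theorem reduces to proving that for every $\varepsilon>0$ there is an $n$-division $P$ of $C$ with $\widetilde{D}(P)>D(C)-\varepsilon$.

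To build such a $P$ (uniformly in the dimension), I would fix a diameter segment $s=pq$ of $C$, let $u$ be its direction, fix a unit vector $\nu\perp u$, and set $t(x)=(x-p)\cdot\nu$, the signed distance to the line $L_s$ containing $s$; then $t(C)=[a,b]$ with $a\leq 0\leq b$ and $a<b$. The idea is to slice $C$ by $n-1$ parallel hyperplanes $\{t=\tau_i\}$ all lying very close to $L_s$. The key auxiliary fact is that
$$g(\tau):=\sup\{\ell(C\cap(x+\rr u)):\ t(x)=\tau\},$$
the largest length of a chord of $C$ in direction $u$ among those at level $\tau$, is a \emph{concave} function of $\tau$ on $[a,b]$: this follows from the classical fact that $x\mapsto\ell(C\cap(x+\rr u))$ is concave on its (convex) domain, together with the elementary observation that a supremum of a concave function over parallel affine slices of a convex set is again concave. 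Moreover $g\leq D(C)$ everywhere (a chord cannot exceed the diameter), while $g(0)\geq\ell(s)=D(C)$ since the slice $C\cap\{t=0\}$ contains $s$; hence $g(0)=D(C)$. Concavity then forces $\lim_{\tau\to 0}g(\tau)=D(C)$: when $0\in(a,b)$ this is just continuity of a concave function on an open interval, and when $0$ is an endpoint of $[a,b]$ (the case in which $s$ lies on $\ptl C$ and $L_s$ is a supporting line), the one-sided bound $\liminf_{\tau\to 0}g(\tau)\geq g(0)$ guaranteed by concavity, combined with $g\leq D(C)$, still yields the limit. Thus there is an interval $J\subseteq[a,b]$ around $0$ (two-sided if $0\in(a,b)$, one-sided otherwise) on which $g>D(C)-\varepsilon$.

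With this in hand I would choose $n-1$ distinct levels $\tau_1<\dots<\tau_{n-1}$ inside $J$ (so automatically $a<\tau_1$ and $\tau_{n-1}<b$) and take the $n$ slabs $R_0=C\cap\{t\leq\tau_1\}$, $R_j=C\cap\{\tau_j\leq t\leq\tau_{j+1}\}$ for $1\leq j\leq n-2$, and $R_{n-1}=C\cap\{t\geq\tau_{n-1}\}$; this is a bona fide $n$-division produced by $n-1$ successive cuts, and each $R_j$ has non-empty interior because $t$ maps $\inte C$ onto $(a,b)$. Now the slab that contains the level $0$ contains the whole slice $C\cap\{t=0\}$, hence contains $s$, so it has diameter exactly $D(C)$; and every other slab $R_j$ contains the slice $C\cap\{t=\tau_i\}$ at one of its boundary levels $\tau_i\in J$, which itself contains a chord of $C$ in direction $u$ of length $g(\tau_i)>D(C)-\varepsilon$. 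Therefore $\widetilde{D}(P)=\min_j D(R_j)>D(C)-\varepsilon$, and letting $\varepsilon\to 0$ gives $\widetilde{D}_n(C)\geq D(C)$, which combined with Lemma~\ref{le:cotatrivial} finishes the proof.

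The step I expect to require the most care is the auxiliary statement about $g$ near $\tau=0$, and in particular making it work when the diameter segment $s$ lies on the boundary of $C$ so that $0$ is an endpoint of $[a,b]$ and $g$ is only one-sidedly continuous there — concavity of $g$ is precisely what rescues this case, and pinning down its clean formulation (concavity of the directional chord-length function, plus concavity being preserved under suprema over parallel slices) is the one spot where a short argument is genuinely needed. Everything else — the reduction to the $\varepsilon$-construction, the check that the slabs have non-empty interior (which is just $t(\inte C)=(a,b)$), and the balancing statement — is routine.
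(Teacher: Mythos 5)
Your proof is correct and follows essentially the same route as the paper's: both establish $\widetilde{D}_n(C)\geq D(C)$ by slicing $C$ with $n-1$ parallel hyperplanes collapsing onto a hyperplane containing a diameter segment $s$, and both deduce balancedness from the two-sided bound $D(C_i)\geq D(C)\geq D(C_i)$. Your concavity argument for the chord-length function $g$ merely makes rigorous the limiting step that the paper states informally (``each subset tends to contain $s$'').
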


\begin{proof}
Let $s$ be a diameter segment of $C$, and fix a hyperplane $H_0$ in $\rr^d$ containing $s$.
We can consider $n-2$ hyperplanes $H_1,\ldots,H_{n-2}$ parallel to $H_0$
providing (together with $H_0$) an $n$-division of $C$.
By approaching each hyperplane $H_i$ to $H_0$ parallelly, for $i=1,\ldots, n-2$,
we will have a sequence $\{P_m\}$ of $n$-divisions of $C$, each of them with subsets $C_1^m,\ldots,C_n^m$,
satisfying that $\displaystyle{\lim_{m\to\infty} \widetilde{D}(P_m) =D(C)}$,
as each subset $C_i^m$ tends to contain $s$ when $m$ tends to infinity.
Hence $\widetilde{D}_n(C)\geq D(C)$, and by using Lemma~\ref{le:cotatrivial} we conclude that $\widetilde{D}_n(C)=D(C)$.

On the other hand, let $P$ be an optimal $n$-division of $C$ into subsets $C_1,\ldots,C_n$.
Then $\widetilde{D}(P)=\widetilde{D}_n(C)=D(C)$, and so $D(C_i)\geq D(C)$, for $i=1,\ldots,n$.
However, by  direct set inclusion we also have that $D(C_i)\leq D(C)$, which leads to $D(C_i)=D(C)$,
for  $i=1,\ldots,n$, yielding the statement.
\end{proof}



%

Theorem~\ref{prop:MmDov} also allows to study the existence of optimal divisions for this problem. Theorem~\ref{prop:MmDexistencia1} easily proves this existence in $\rr^d$ when $d\geq 3$, while this does not always occur for $d=2$, as stated in  Theorem~\ref{prop:MmDexistencia2}.

\begin{theorem}
\label{prop:MmDexistencia1}
Let $C$ be a convex body in $\rr^d$, where $d\geq 3$. Then, there exists an optimal $n$-division of $C$ for the Max-min problem for the diameter.
\end{theorem}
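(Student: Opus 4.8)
The plan is to exploit the key dimensional fact that in $\rr^d$ with $d\geq3$ there is "enough room" to thread all $n-1$ cutting hyperplanes so that each of them contains a fixed diameter segment $s$ of $C$, thereby forcing every subset of the $n$-division to contain $s$ and hence to have diameter exactly $D(C)$. First I would fix a diameter segment $s$ of $C$ with endpoints $p,q$, and consider the pencil of hyperplanes through the line $\ell$ spanned by $s$; since $d\geq3$, the space of such hyperplanes is at least one-dimensional, so there is room to choose $n-1$ \emph{distinct} hyperplanes $H_1,\ldots,H_{n-1}$, all containing $\ell$, in sufficiently general position that each nonempty intersection with $\inte C$ is a genuine hyperplane piece. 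These $n-1$ hyperplanes, taken as successive cuts (first cut $H_1$, then cut one of the two pieces by $H_2$, and so on — the order does not matter because they all pass through $\ell$), produce an $n$-division $P$ of $C$ into convex subsets $C_1,\ldots,C_n$.

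The second step is to verify that every subset $C_i$ contains the segment $s$. Since each cutting hyperplane $H_j$ contains $\ell\supset s$, the segment $s$ is never cut: it lies on the common boundary of the regions it meets, so it is contained in the closure of each of the finitely many convex cells of the arrangement that it passes through. More carefully, $s$ is contained in $\bigcap_j H_j \cap C$, a convex subset of $\partial C_i$ for every $i$ that $s$ touches; but an arrangement of hyperplanes through a common flat $\ell$ subdivides a neighbourhood of $\ell$ into cells each of which has $\ell$ (locally) in its closure, and by choosing the $H_j$ in general position about $\ell$ one arranges that every one of the $n$ cells meets $\ell$. Hence $s\subseteq C_i$ for all $i$, which gives $D(C_i)\geq \ell(s)=D(C)$, and combined with the trivial $D(C_i)\leq D(C)$ we get $D(C_i)=D(C)$ for all $i$. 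Therefore $\widetilde{D}(P)=D(C)$, and by Theorem~\ref{prop:MmDov} this equals $\widetilde{D}_n(C)$, so $P$ is an optimal $n$-division.

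The one technical point to be careful about — and the main obstacle — is ensuring that the chosen hyperplanes through $\ell$ actually cut $C$ into $n$ \emph{nonempty-interior} pieces, i.e.\ that each $H_j$ slices off a new cell and that all $n$ resulting cells contain $s$ in their closure. This is where $d\geq3$ is essential: the pencil of hyperplanes through $\ell$ has dimension $d-2\geq1$, so one can rotate successive hyperplanes about $\ell$ by small angles to guarantee that each $H_j\cap\inte C\neq\emptyset$ and that the arrangement of the $H_j$ restricted to a small transverse disk around an interior point of $s$ consists of $n$ distinct sectors, each of which therefore extends to a cell of the division whose closure contains that portion of $s$. (When $d=2$ the pencil through $\ell$ is a single hyperplane, so this construction collapses — which is exactly why existence can fail in the plane, as Theorem~\ref{prop:MmDexistencia2} and Remark~\ref{re:triangle} will show.) Once this combinatorial/general-position claim is in place, the rest is immediate from Theorem~\ref{prop:MmDov}.
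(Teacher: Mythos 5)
Your proposal is correct and follows essentially the same route as the paper: fix a diameter segment $s$, take $n-1$ distinct hyperplanes containing $s$ (possible since the pencil through the line of $s$ is at least one-dimensional when $d\geq 3$), observe that every resulting subset contains $s$ and hence has diameter $D(C)$, and conclude optimality from Theorem~\ref{prop:MmDexistencia1}'s companion result Theorem~\ref{prop:MmDov}. The paper states this more tersely, leaving the general-position and successive-cut bookkeeping implicit, which you have simply made explicit.
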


\begin{proof}
Let $s$ be a diameter segment of $C$. We can consider $n-1$ distinct hyperplanes containing $s$, yielding an $n$-division $P$ of $C$ into subsets $C_1,\ldots,C_n$. As each subset $C_i$ contains the segment  $s$, it follows that $D(C_i)=D(C)$, for  $i=1,\ldots,n$. Then $\widetilde{D}(P)=D(C)$, 
and so $P$ is optimal, in view of Theorem~\ref{prop:MmDov}.
\end{proof}

\begin{remark}
Observe that the reasoning from the proof of Theorem~\ref{prop:MmDexistencia1} cannot be applied in the planar case, since we will only have \emph{one hyperplane} (which will be a line in this case) containing any  fixed segment in $\rr^2$.
\end{remark}

In order to study the existence of optimal divisions for a planar convex body $C$, we have to make some previous considerations. We already know from Theorem~\ref{prop:MmDov} that all the subsets of an optimal division will have diameter equal to $D(C)$, which implies that all of them will necessarily contain a diameter segment of $C$. This suggests that we will need \emph{enough} diameter segments in $C$ for constructing an optimal division (if we have few diameter segments in $C$, we will not be able to partition $C$ into many subsets with diameter equal to $D(C)$). Therefore, in this planar setting, the existence of optimal divisions will strongly depend on the number of diameter segments of $C$, and more precisely, on how they are placed in $C$. In general, in order to construct an optimal division of $C$, each diameter segment of $C$ contained in the interior of $C$ will lead us to two subsets of $C$ with diameter equal to $D(C)$, by means of appropriate lines, which are the hyperplane cuts in this planar situation (some of these cuts will be determined by the diameter segments, and the other ones will be done \emph{between} the previous cuts, see Figure~\ref{fig:BIcuts1}).
\begin{figure}[ht]
    \centering
    \includegraphics[width=0.75\textwidth]{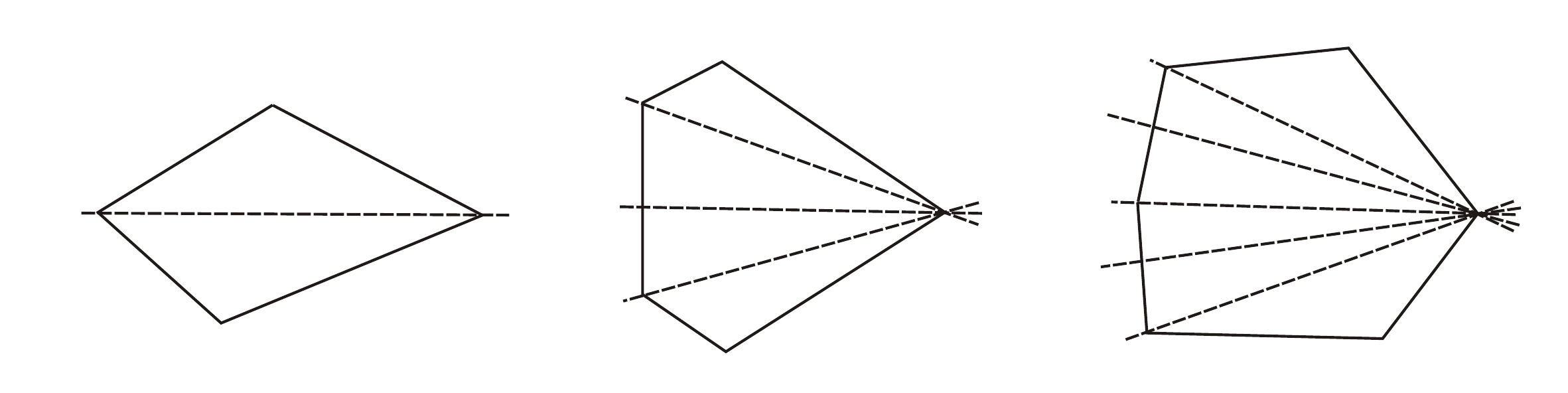}
    \caption{Each  diameter segment in the interior of $C$ leads to two different subsets  (delimited by the dashed lines) with diameter equal to $D(C)$}
    \label{fig:BIcuts1}
\end{figure}
And regarding the diameter segments of $C$ contained in $\partial C$, each of them can only belong to one subset with diameter equal to $D(C)$, see Figure~\ref{fig:BIcuts2}. \begin{figure}[ht]
    \centering
    \includegraphics[width=0.2\textwidth]{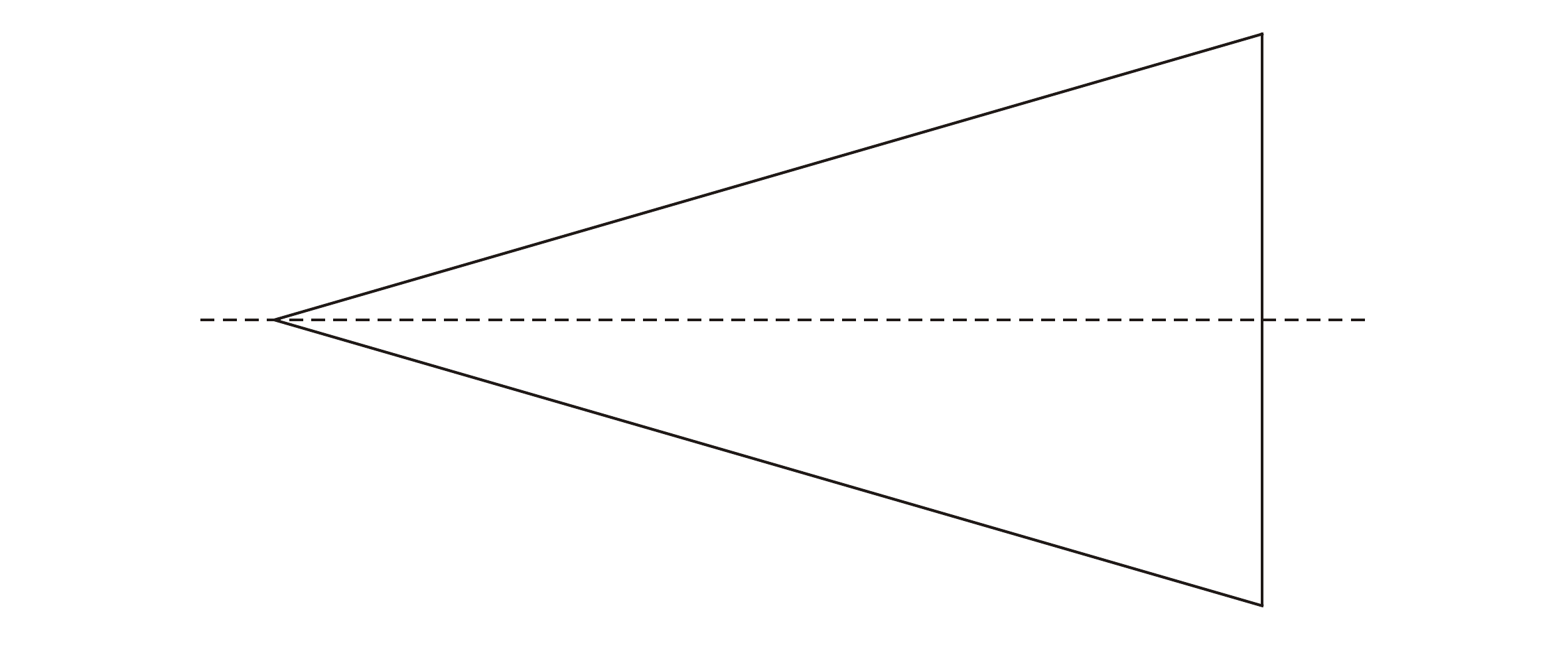}
    \caption{Any diameter segment contained in $\partial C$ will belong to a unique subset (determined by the dashed line) with diameter equal to $D(C)$}
    \label{fig:BIcuts2}
\end{figure}
Taking into account this, we have the following existence result in the planar case, where previous  Remark~\ref{re:types} will be used.

\begin{theorem}
\label{prop:MmDexistencia2}
Let $C$ be a convex body in $\rr^2$. Then, there exists an optimal $n$-division of $C$ for the Max-min problem for the diameter if and only if there exists a set $J_C$ of diameter segments of $C$ with disjoint interiors such that
\begin{equation}
\label{eq:n}
n\leq 2a+b-\delta_C,
\end{equation}
where $a$ is the number of diameter segments of $J_C$ contained in the interior of $C$ and $b$ is the number of diameter segments of $J_C$ contained in $\partial C$, with $\delta_C=1$ if $J_C$ is triangle-type, or $\delta_C=0$ if $J_C$ is fan-type (see Remark \ref{re:types}).
\end{theorem}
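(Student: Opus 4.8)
The plan is to prove both implications by a careful accounting of how many subsets of diameter $D(C)$ can be produced, relying on Remark~\ref{re:types} (which guarantees the only two configurations of a maximal family of diameter segments with disjoint interiors are triangle-type and fan-type) and on Theorem~\ref{prop:MmDov} (which tells us every subset of an optimal $n$-division must contain a diameter segment of $C$).

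For the forward implication, suppose an optimal $n$-division $P$ of $C$ exists, with subsets $C_1,\dots,C_n$. By Theorem~\ref{prop:MmDov}, each $C_i$ contains a diameter segment $s_i$ of $C$, and since the $C_i$ have pairwise disjoint interiors, the $s_i$ that lie in the interior of $C$ have pairwise disjoint interiors; moreover two distinct subsets cannot both contain a diameter segment lying in $\partial C$ unless those boundary segments are distinct. From this collection I would extract a set $J_C$ of diameter segments with disjoint interiors and argue that the number of subsets sharing a given interior diameter segment $s$ of $J_C$ is at most $2$ (the segment $s$ splits the cuts near it into two sides), while a boundary diameter segment of $J_C$ can be shared by at most one subset, as illustrated in Figures~\ref{fig:BIcuts1} and~\ref{fig:BIcuts2}. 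Then counting: the number of subsets is at most $2a+b$ if all interior segments of $J_C$ are genuinely "doubled"; but when $J_C$ is triangle-type, the three vertices of the triangle force an incompatibility — one of the three interior segments cannot be used on both sides simultaneously because the two would-be subsets on one side would have to contain the whole triangle region, collapsing two potential subsets into one — which accounts for the correction $\delta_C=1$. When $J_C$ is fan-type there is no such obstruction, giving $\delta_C=0$ and $n\le 2a+b$.

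For the converse, given a set $J_C$ of diameter segments with disjoint interiors satisfying~\eqref{eq:n}, I would \emph{construct} the division explicitly. For each of the $a$ interior diameter segments of $J_C$, place a line cut along it (in the fan-type case these lines all pass through the common endpoint; in the triangle-type case they bound the triangle), and then insert one additional line cut \emph{between} consecutive such cuts, as in Figure~\ref{fig:BIcuts1}, so that each interior segment yields two subsets each still containing a full diameter segment; for each of the $b$ boundary diameter segments, place a single line cut producing one subset containing it, as in Figure~\ref{fig:BIcuts2}. This produces $2a+b-\delta_C$ subsets of diameter $D(C)$ (the $-\delta_C$ reflecting the unavoidable merge in the triangle-type case), and if $n$ is strictly smaller one simply uses fewer of the auxiliary cuts or amalgamates; if one needs \emph{more} subsets that is exactly what~\eqref{eq:n} forbids. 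One then checks these cuts can be arranged as $n-1$ \emph{successive} hyperplane cuts (a combinatorial ordering argument, using that the cuts are nested or fan out from a point), and that every resulting subset has diameter exactly $D(C)$, so $\widetilde{D}(P)=D(C)$ and $P$ is optimal by Theorem~\ref{prop:MmDov}.

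The main obstacle, and the step deserving the most care, is the sharp counting in the triangle-type case: showing rigorously that the equilateral-triangle configuration \emph{loses exactly one} subset compared to $2a+b$, i.e. that $\delta_C=1$ is both necessary (no division does better) and achievable (a division attaining $2a+b-1$ exists). This requires a precise analysis of which region near the triangle can simultaneously serve as a subset containing a diameter segment, using Remark~\ref{re:types}'s dichotomy on how diameter segments meet, and ruling out the pathological possibility that cleverly-chosen auxiliary cuts recover the lost subset. The successive-cut realizability is a secondary but genuine technical point, since a priori an arbitrary arrangement of the chosen lines need not be obtainable by cutting one piece at a time.
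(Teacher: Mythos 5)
Your proposal follows essentially the same route as the paper's proof: necessity is obtained by selecting one diameter segment per subset and counting (at most two subsets per interior segment, one per boundary segment, with the $\delta_C$ correction for the triangle-type configuration), and sufficiency by the same explicit construction of cuts along the interior diameter segments of $J_C$, auxiliary cuts between consecutive ones, and one cut per boundary segment. The triangle-type sharpness you flag as the main obstacle is treated in the paper at essentially the level of detail you sketch, via a case-by-case construction according to the number of interior segments (Figure~\ref{fig:BIcuts3}).
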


\begin{proof}
Assume firstly that \eqref{eq:n} holds, and  we will prove that we can construct an optimal $n$-division of $C$. On the one hand, if $J_C$ is fan-type, we will have that $b\leq 2$ and we can proceed as follows: if $a=0$, then necessarily $b=2$, which implies that $n=2$ and any hyperplane cut \emph{between} the two boundary diameter segments will provide an optimal 2-division of $C$. Otherwise, if $a>0$,
each diameter segment in the interior of $C$ will determine a hyperplane cut, and each boundary diameter segment will give an additional hyperplane cut (placed between the boundary diameter segment and the adjacent interior diameter segment).
Finally, for each two consecutive interior ones, we can consider the corresponding bisector as a new hyperplane cut. This procedure gives $2a-1+b$ hyperplane cuts, yielding a division $P$ of $C$ into at most $2a+b$ subsets, all of them with diameter equal to $D(C)$. Then, $P$ is optimal.
On the other hand, if $J_C$ is triangle-type, it follows that we can  have four different possibilities, depending on the number of diameter segments of $J_C$ in the interior of $C$. For each possibility we can find an optimal division into  at most $2a+b-1$ subsets, as shown in  Figure~\ref{fig:BIcuts3}.

\begin{figure}[ht]
    \centering
    \includegraphics[width=0.93\textwidth]{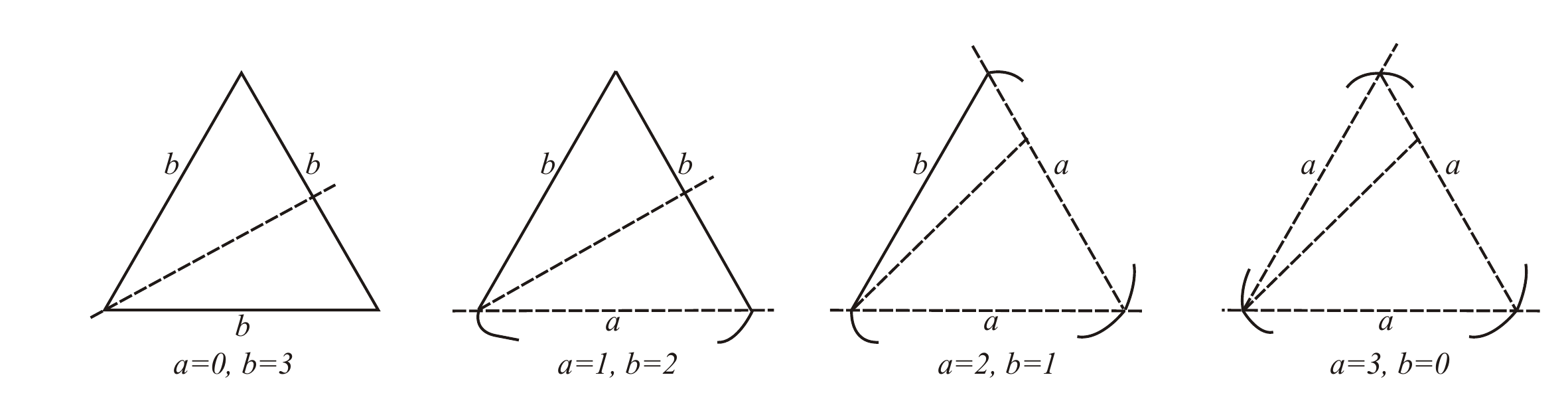}
    \caption{Optimal divisions of $C$ into $2a+b-1$ subsets when $J_C$ is triangle-type (the dashed lines indicate the hyperplane cuts of each division)}
    \label{fig:BIcuts3}
\end{figure}

Conversely, given an optimal $n$-division $P$ of $C$, let $J_C$ be the set composed by one diameter segment from each subset of $P$. The same reasoning as before yields that the maximum number of subsets in $P$ will be  $2a+b-\delta_C$, which finishes the proof.

\end{proof}


\begin{example}
\label{re:triangle}
Just to illustrate one particular situation where we do not have existence of optimal divisions, consider the following examples: for an equilateral triangle $\mathcal{T}$, we have that the unique maximal set $J_\mathcal{T}$ of diameter segments of $\mathcal{T}$ with disjoint interiors will be composed by the three sides of $\mathcal{T}$. Hence, $a=0$, $b=3$ and $\delta_{\mathcal{T}}=1$, and so $2a+b-\delta_{\mathcal{T}}=2$. This means that, for any $n\geq 3$, there are no optimal $n$-divisions of $\mathcal{T}$ for  the Max-min problem for the diameter (that is, any division of $\mathcal{T}$ into three or more subsets will always have a subset with diameter strictly smaller than $D(\mathcal{T})$). Another simple example is given by a circle $C$, for which $J_C$ is composed by just one interior diameter segment and so $2a+b-\delta_C=2$. It is clear that any $n$-division of $C$, with $n\geq3$, will necessarily have a subset with diameter strictly smaller than $D(C)$, yielding the non-existence of optimal divisions of $C$ in these cases.
\end{example}

We have seen in Theorem~\ref{prop:MmDov} that the optimal value for the Max-min problem for the diameter is precisely the diameter of the considered convex body. Although the computation of the diameter of a general convex body is a hard task, if we focus on the family of convex polygons, that value can be obtained by using a linear algorithm due to M.~I.~Shamos~\cite{shamos} (see also~\cite{toussaint}), which is based on the so-called technique of \emph{rotating calipers}.
A priori, it seems difficult to extend the previous algorithm to general convex bodies.
Although any planar convex body $C$ can be approximated by a sequence $\{P_k\}$ of polygons,
and the sequence $\{\widetilde{D}(P_k)\}$ will tend to $D(C)$,
the construction of $\{P_k\}$ and the pass to the limit constitute issues which are not controlled from the computational point of view.
On the other hand, the extension to higher dimensions cannot be applied by using the rotating calipers technique,
since we have not a unique direction for the corresponding rotations applied throughout the procedure.

\subsection{Max-min problem for the width}
\label{subsec:Mmw}
For this problem, Theorem~\ref{prop:Mmwexistence} below guarantees the existence of optimal divisions, proving also that  all optimal divisions into $n=2$ subsets are balanced,  
We will also obtain sharp lower and upper bounds for the corresponding optimal value in Theorem~\ref{prop:Mmwbound1}.

\begin{lemma}
\label{le:parallel} 
Let $C$ be a convex body in $\rr^d$, and let $P$ be a 2-division of $C$ into subsets $C_1$, $C_2$, given by a hyperplane $H$, with $w(C_1)<w(C)$. For $t>0$, let $P^t$ be the 2-division of $C$ into subsets $C_1^t$, $C_2^t$ given by a hyperplane $H^t$ parallel to $H$ at distance $t>0$, such that $C_1\subset C_1^t$. Then, $w(C_1)<w(C_1^t)$.
\end{lemma}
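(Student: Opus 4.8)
The plan is to argue by contradiction. Since $C_1\subseteq C_1^t$, monotonicity of the width under inclusion already gives $w(C_1)\le w(C_1^t)$, so it suffices to rule out the equality $w(C_1^t)=w(C_1)$ when $t>0$; this is exactly where the hypothesis $w(C_1)<w(C)$ must enter. I would fix coordinates so that $H=\{x_d=0\}$ and $C_1=C\cap\{x_d\le 0\}$, hence $H^t=\{x_d=t\}$ and $C_1^t=C\cap\{x_d\le t\}$.

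Assume $w(C_1^t)=w(C_1)=:w_0$ and set $K:=C_1^t$. First I would pick a unit vector $u$ realizing $w(K)$, i.e.\ with $w(K,u):=\max_{x\in K}\langle x,u\rangle-\min_{x\in K}\langle x,u\rangle=w_0$ (this uses only continuity of $u\mapsto w(K,u)$ and compactness of the unit sphere). Since $C_1\subseteq K$ we get $w_0=w(C_1)\le w(C_1,u)\le w(K,u)=w_0$, so $C_1$ and $K$ have the same width $w_0$ in the direction $u$; combined with $C_1\subseteq K$ this forces $\max_{C_1}\langle\cdot,u\rangle=\max_{K}\langle\cdot,u\rangle=:\ell^+$ and $\min_{C_1}\langle\cdot,u\rangle=\min_{K}\langle\cdot,u\rangle=:\ell^-$, with $\ell^+-\ell^-=w_0$. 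In particular the slab $\Sigma:=\{x:\ell^-\le\langle x,u\rangle\le\ell^+\}$ has width $w_0$ and contains $K=C_1^t$.

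The crucial step is to show that $\Sigma$ also contains $C_2^t=C\cap\{x_d\ge t\}$: then $C=C_1^t\cup C_2^t\subseteq\Sigma$, hence $w(C)\le w_0=w(C_1)<w(C)$, a contradiction. To prove it, choose $x^+\in C_1$ with $\langle x^+,u\rangle=\ell^+$ and $x^-\in C_1$ with $\langle x^-,u\rangle=\ell^-$; both have $d$-th coordinate $\le 0<t$. Given $y\in C_2^t$, if $y_d=t$ then $y\in C\cap\{x_d\le t\}=K\subseteq\Sigma$; otherwise $y_d>t$, and on the segment $[x^+,y]\subseteq C$ (convexity) there is a point $q$ with $q_d=t$, which lies strictly between $x^+$ and $y$. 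Since $q\in C\cap\{x_d\le t\}=K$ we have $\langle q,u\rangle\le\ell^+$; writing $q=(1-\lambda)x^+ +\lambda y$ with $\lambda\in(0,1)$ this becomes $(1-\lambda)\ell^+ +\lambda\langle y,u\rangle\le\ell^+$, i.e.\ $\langle y,u\rangle\le\ell^+$. The symmetric argument along $[x^-,y]$ gives $\langle y,u\rangle\ge\ell^-$, so $y\in\Sigma$, as claimed.

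I expect the only (modest) obstacle to be the bookkeeping in this last step: checking that $q$ is a genuine interior point of the segment so that $\lambda\in(0,1)$, and disposing of the boundary case $y_d=t$, together with the harmless degenerate situation where $H^t$ misses $C$ (then $C_1^t=C$ and there is nothing to prove, which the argument absorbs automatically). Everything else reduces to the support-function description of the width and elementary convexity.
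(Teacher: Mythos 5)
Your proof is correct. It is, in essence, the contrapositive packaging of the paper's argument: the paper fixes a slab $B$ realizing $w(C_1^t)$, uses $w(C_1^t)<w(C)$ to produce a point $q_2\in C_2^t$ outside $B$, and then argues via convexity (segments from $q_2$ to points of $h_1\cap C$ crossing $H^t$) that the near face $h_1$ of $B$ cannot meet $\partial C_1$, so $C_1$ fits in a strictly narrower slab $B'\subset B$ and $w(C_1)\le w(B')<w(C_1^t)$ directly. You instead assume $w(C_1^t)=w(C_1)$, deduce that $C_1$ must touch \emph{both} faces of the width slab $\Sigma$ of $C_1^t$ in the critical direction $u$, and then run the same convexity-across-$H^t$ argument in the other direction to trap all of $C_2^t$, hence all of $C$, inside $\Sigma$, contradicting $w(C_1)<w(C)$. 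The geometric mechanism (a segment joining a contact point of $C_1$ with the slab boundary to a point of $C_2^t$ must cross $H^t$ inside $C_1^t$) is the same implication read in opposite directions. What your version buys is rigor and uniformity: the support-function bookkeeping with $\ell^{\pm}$ and the explicit $\lambda\in(0,1)$ computation make the convexity step airtight, you need no case split on whether $w(C_1^t)=w(C)$, and the degenerate case $C_1^t=C$ is absorbed automatically; the paper's direct construction of $B'$ avoids the contradiction framing but is sketchier about why the extension of $s_t$ produces a point of $C$.
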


\begin{proof}
If $w(C_1^t)=w(C)$, the statement trivially holds. So we can assume that $w(C_1^t)<w(C)$. Let  $B$ be a slab determining the width of $C_1^t$. Then $w(B)=w(C_1^t)<w(C)$, and so there necessarily exists a point $q_2\in C_2^t\subset C$ such that $q_2\notin B$. Call $h_1$ the hyperplane from $\partial B$ which is closer to $q_2$, and let $s_t=H^t\cap C$.
We claim that $h_1\cap s_t\neq\emptyset$: otherwise, $s_t$ would be completely contained in the interior of $B$. Extend $s_t$ until meeting $h_1$. By convexity, and using the point $q_2$ and the intersection point of $h_1$ and $\partial C_1^t$ (which does not belong to $s_t$), we could find a point of $C$ in the extension of $s_t$, which is a contradiction. Therefore, we have that $h_1$ intersects $s_t$.  
If $\ptl C_1\cap h_1\neq\emptyset$, then the segment joining that intersection point with $q_2$ yields a  contradiction (since we could find again a point of $C$ in the extension of $s_t$, by convexity). Thus $\ptl C_1\cap h_1=\emptyset$, and so there exists a slab $B'$ containing $C_1$ which is strictly contained in $B$. Then,  $w(C_1)\leq w(B')<w(B)=w(C_1^t)$, as stated. 
\end{proof}

\begin{theorem}
\label{prop:Mmwexistence}
Let $C$ be a convex body in $\rr^d$. Then, there exists a balanced optimal $n$-division of $C$ for the Max-min problem for the width. Moreover, any optimal $2$-division is balanced.
\end{theorem}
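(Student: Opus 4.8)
The plan is to follow the same two-stage strategy used for the diameter in Theorem~\ref{prop:mMDexistence}: first establish existence of an optimal $n$-division via a Blaschke selection / compactness argument, and then upgrade to a balanced one by an inductive rebalancing argument. For existence, I would take a maximizing sequence $\{P_k\}$ of $n$-divisions with $\widetilde{w}(P_k)\to\widetilde{w}_n(C)$, and subsets $C_1^k,\dots,C_n^k$. Apply Blaschke selection (\cite[Th.~1.8.7]{schneider}) to extract convergent subsequences $C_j^k\to C_j^\infty$. The delicate point here (which did not bite in the diameter case) is that $\widetilde{w}$ is a $\min$, so I must check that no limit piece degenerates: if some $C_j^\infty$ had empty interior, then $w(C_j^\infty)=0$, forcing $\widetilde{w}_n(C)=0$, which contradicts $\widetilde{w}_n(C)>0$ from Lemma~\ref{le:cotatrivial}. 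Hence $C_1^\infty,\dots,C_n^\infty$ is a genuine $n$-division, and lower semicontinuity of the $\min$ of the (continuous) widths gives $\widetilde{w}(P^\infty)\geq\widetilde{w}_n(C)$, so $P^\infty$ is optimal. One must also confirm that the limit configuration is still produced by $n-1$ successive hyperplane cuts: the cutting hyperplanes converge (possibly to a position parallel to a face of $C$), and since no piece degenerates, the successive-cut structure is preserved in the limit.

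For the balancing step, I would argue by induction on $n$, exactly parallel to Theorem~\ref{prop:mMDexistence} but with the monotonicity supplied by Lemma~\ref{le:parallel} replacing the continuity-of-diameter argument. Base case $n=2$: let $P$ be an optimal $2$-division by a hyperplane $H$ with subsets $C_1,C_2$; suppose it is not balanced, say $w(C_1)<w(C_2)$, and so $\widetilde{w}(P)=w(C_1)$. As in the diameter proof, first perturb so that $H$ is not parallel to a flat piece of $\partial C$ (replacing $P$ by a nearby optimal division if necessary — here one checks the optimal value is not decreased). Now slide $H$ toward $C_1$: the hyperplane $H^t$ parallel to $H$ at distance $t$ from $C_1$ produces $C_1^t\supseteq C_1$, $C_2^t\subseteq C_2$. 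By Lemma~\ref{le:parallel}, $w(C_1^t)$ is strictly increasing (until it reaches $w(C)$), while $w(C_2^t)$ is nonincreasing by set inclusion and tends to $0$ as $H^t$ exits $C$. At $t=0$, $w(C_1^0)<w(C_2^0)$; for $t$ near the exit value, $w(C_1^t)>w(C_2^t)$. Both functions are continuous, so there is $t_0$ with $w(C_1^{t_0})=w(C_2^{t_0})=:v$. Since $C_1^{t_0}\supseteq C_1$ we get $v=w(C_1^{t_0})\geq w(C_1)=\widetilde{w}(P)=\widetilde{w}_2(C)$, and $\widetilde{w}(P^{t_0})=v$, so $P^{t_0}$ is a balanced optimal $2$-division. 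This simultaneously proves the "moreover'' clause: an optimal $2$-division cannot be strictly unbalanced, since the above shows that if it were, the balanced division $P^{t_0}$ would have $\widetilde{w}(P^{t_0})=v>w(C_1)=\widetilde{w}_2(C)$ unless $v=w(C_1)$ — and one checks that $w(C_1^{t_0})=w(C_1)$ together with the strict monotonicity of Lemma~\ref{le:parallel} forces $t_0=0$, i.e.\ $P$ was already balanced. (I would phrase this last deduction carefully: strict monotonicity of $t\mapsto w(C_1^t)$ on the range where $w(C_1^t)<w(C)$ is exactly what rules out a plateau.)

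For the inductive step with $n\geq 3$, take an optimal $n$-division $P$, and by Remark~\ref{re:dos} let $H$ be a first hyperplane cut splitting $C$ into convex regions $E_1,E_2$ with induced sub-divisions into $n_1$ and $n_2$ pieces, $n_1+n_2=n$. Replace the sub-divisions by balanced optimal $n_i$-divisions $P_i'$ of $E_i$ via the induction hypothesis; since $\widetilde{w}$ is a $\min$ and $\widetilde{w}_{n_i}(E_i)\geq\widetilde{w}(P_i)$ by optimality on $E_i$, the new division $Q$ has $\widetilde{w}(Q)=\min\{\widetilde{w}_{n_1}(E_1),\widetilde{w}_{n_2}(E_2)\}\geq\min\{\widetilde{w}(P_1),\widetilde{w}(P_2)\}=\widetilde{w}(P)$, so $Q$ is still optimal (it cannot exceed $\widetilde{w}_n(C)$). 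If $\widetilde{w}_{n_1}(E_1)=\widetilde{w}_{n_2}(E_2)$ then $Q$ is already balanced and we are done; otherwise slide $H$ as in the base case — at each parallel position $H^t$ re-apply the induction hypothesis to get balanced optimal sub-divisions of $E_1^t,E_2^t$, use Lemma~\ref{le:parallel}-type monotonicity (applied to $w(E_1^t)$, hence controlling $\widetilde{w}_{n_1}(E_1^t)$, which is continuous in $t$) together with $\widetilde{w}_{n_2}(E_2^t)$ being nonincreasing and eventually $0$, and invoke continuity to find $t_0$ where the two balanced values agree, yielding a balanced optimal $Q$. The main obstacle I anticipate is the continuity and monotonicity of $t\mapsto\widetilde{w}_{n_i}(E_i^t)$ as functions of the sliding parameter: monotonicity of the raw width from Lemma~\ref{le:parallel} does not immediately transfer to the optimal Max-min value $\widetilde{w}_{n_1}$, so I would instead argue that $\widetilde{w}_{n_1}(E_1^t)$ is continuous (from Blaschke-type stability of the optimal value under Hausdorff convergence of the domain) and that it runs from below $\widetilde{w}_{n_2}(E_2^0)$ at $t=0$ up to at least $w(C)/n_1$ as $E_1^t$ swells to fill $C$, which is enough for the intermediate-value conclusion without needing strict monotonicity everywhere. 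Verifying this continuity of the optimal value is the technical heart of the argument.
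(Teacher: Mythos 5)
Your proposal follows essentially the same route as the paper: Blaschke selection for existence (with the same non-degeneracy argument via Lemma~\ref{le:cotatrivial}), Lemma~\ref{le:parallel} to force balancedness of optimal $2$-divisions, and the sliding-hyperplane induction with balanced optimal sub-divisions of $E_1^t$, $E_2^t$ for general $n$. The only substantive difference is that you explicitly flag the continuity of $t\mapsto\widetilde{w}_{n_i}(E_i^t)$ as the point needing justification --- the paper invokes it without comment --- and your proposed fix (stability of the optimal value under Hausdorff convergence of the domain) is the natural one.
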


\begin{proof}
Let us first prove the existence of an optimal $n$-division. Let $\{P_k\}$ be a sequence of $n$-divisions of $C$  such that
$\displaystyle{\lim_{k\to\infty} \widetilde{w}(P_k)=\widetilde{w}_n(C)}$.
Denote by  $C_1^k,\ldots,C_n^k$  the subsets of $C$ provided by $P_k$,  $k\in\mathbb{N}$.
By applying Blaschke selection theorem~\cite[Th.~1.8.7]{schneider}, we can assume that, for each $i=1,\ldots,n$,
the sequence $\{C_i^k\}$ converges to a subset $C_i^\infty$ with non-empty interior: in the case that  $C_i^\infty$ has empty interior, then  $\displaystyle{0=w(C_i^\infty)=\lim_{k\to\infty} w(C_i^k)}$,
which implies that $\widetilde{w}_n(C)=0$, contradicting Lemma~\ref{le:cotatrivial}.
Thus, the subsets $C_1^\infty,\ldots,C_n^\infty$ yield a new $n$-division  $P^\infty$ of $C$ with $\displaystyle{\widetilde{w}(P^\infty)=\lim_{k\to\infty} \widetilde{w}(P_k)=\widetilde{w}_n(C)}$, which implies that $P^\infty$ is optimal.

Let us prove now that for $n=2$ any optimal partition is balanced. Let $C_1$, $C_2$ be the subsets provided by an optimal division $P$ and assume that $P$ is not balanced, say $w(C_1)<w(C_2)$. By  Lemma~\ref{le:parallel}, we can find a 2-division $P^t$ of $C$ with subsets $C_1^t$, $C_2^t$ such that $w(C_1)<w(C_1^t)\leq w(C_2^t)$.
Then $\widetilde{w}(P^t)=w(C_1^t)>w(C_1)=\widetilde{w}(P)$, which contradicts the optimality of $P$. Therefore, $P$ must be balanced.

To finish the proof, we will show now the existence of a balanced optimal $n$-division by induction on the considered number of subsets $n\geq 2$.
If $n=2$,  it has been already shown that any optimal $2$-division is balanced. Fix now $n>2$, and assume that for any convex body in $\rr^d$, there exists a balanced optimal $m$-division, for  $m<n$. Let $P$ be an optimal $n$-division of $C$. Let $H$ be one of the hyperplane cuts  dividing $C$ into two convex regions $E_1$, $E_2$ (by virtue of Remark~\ref{re:dos}), and let $P_i$ be the $n_i$-division of $E_i$ induced by $P$, $i=1,2$, with $n=n_1+n_2$. Taking into account the induction hypothesis, there exists a balanced optimal $n_i$-division $P_i'$ of $E_i$, $i=1,2$. Observe that $\widetilde{w}(P_i)\leq \widetilde{w}(P_i')=\widetilde{w}_{n_i}(E_i)$, $i=1,2$, and so
\begin{equation}
\label{eq:chain}
\widetilde{w}_n(C)=\widetilde{w}(P)=\min\{\widetilde{w}(P_1),\widetilde{w}(P_2)\}\leq\min\{\widetilde{w}(P_1'),\widetilde{w}(P_2')\}=\widetilde{w}(P'),
\end{equation}
where $P'$ is the $n$-division of $C$ determined by $P_1'$, $P_2'$. Observe that \eqref{eq:chain} implies that $P'$ is also an optimal $n$-division of $C$. If $\widetilde{w}(P_1')=\widetilde{w}(P_2')$, then $P'$ is balanced by construction, and the statement holds.
On the other hand, if $\widetilde{w}(P_1')>\widetilde{w}(P_2')$, let $H^t$ be a hyperplane parallel to $H$ at distance $t\geq 0$ with respect to $E_2$, and let $E_1^t$, $E_2^t$ be the two convex regions into which $H^t$ divides $C$. Since $$\widetilde{w}_{n_1}(E_1^0)=\widetilde{w}_{n_1}(E_1)=\widetilde{w}(P_1')>\widetilde{w}(P_2')=\widetilde{w}_{n_2}(E_2)=\widetilde{w}_{n_2}(E_2^0)$$
and
$$
\widetilde{w}_{n_1}(E_1^{t_1})=0<\widetilde{w}_{n_2}(C)=\widetilde{w}_{n_2}(E_2^{t_1}),
$$
for certain $t_1>0$ large enough, it follows by continuity that there exists $t_0>0$ such that $\widetilde{w}_{n_1}(E_1^{t_0})=\widetilde{w}_{n_2}(E_2^{t_0})$. By considering a balanced optimal $n_i$-division $P_i^{t_0}$ of $E_i^{t_0}$, which exists by the induction hypothesis and satisfies $\widetilde{w}(P_i^{t_0})=\widetilde{w}_{n_i}(E_i^{t_0})$, $i=1,2$, it follows that the $n$-division $P^{t_0}$ of $C$, determined by $P_1^{t_0}$, $P_2^{t_0}$, is balanced by construction, and also optimal: since $E_2\subset E_2^{t_0}$, we have
\begin{align*}
\widetilde{w}(P')=&\min\{\widetilde{w}(P_1'),\widetilde{w}(P_2')\}=\widetilde{w}(P_2')= \widetilde{w}_{n_2}(E_2) \leq \widetilde{w}_{n_2} (E_2^{t_0})=
\\
=&\, \widetilde{w}(P_2^{t_0})
=\min\{\widetilde{w}(P_1^{t_0}),\widetilde{w}(P_2^{t_0})\}
=\widetilde{w}(P^{t_0}),
\end{align*}
and so equality above must hold to avoid a contradiction with the optimality of $P'$.

\end{proof}


\begin{example}
\label{re:Mmwisosceles}
\textup{The optimal division of a given convex body for this problem is not unique in general, as can be seen with the following  example.  Consider an isosceles triangle $T$ of sides $l_1$, $l_2$, $l_3$ (being $l_1$ the shortest one), relatively close to be equilateral (for instance, let the side lengths be  $4, 5, 5$).
Call $P_T$ to the 2-division of $T$ into subsets $T_1$, $T_2$ determined by the bisection of one of the largest angles in $T$, namely, at vertex $v_2$, see Figure~\ref{fig:isosceles}.
\begin{figure}[ht]
    \centering
    \includegraphics[width=0.8\textwidth]{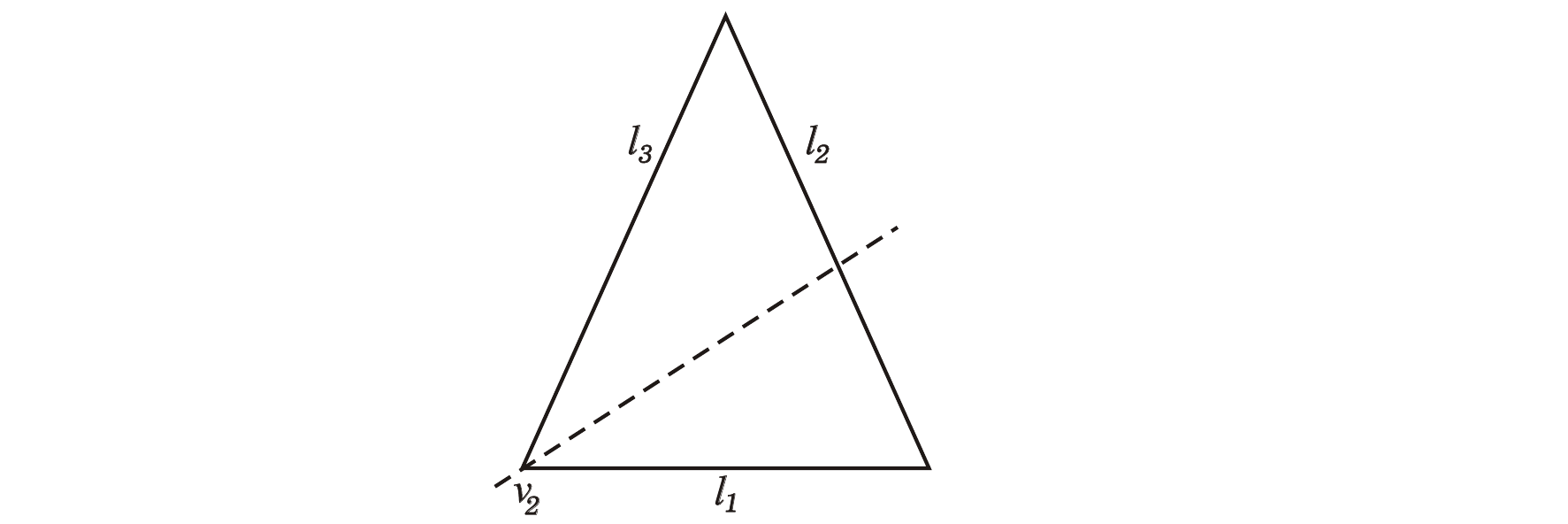}
    \caption{For an isosceles triangle $T$, the 2-division  $P_T$ is optimal}
    \label{fig:isosceles}
\end{figure}
We have that $P_T$ is a balanced optimal 2-division of $T$: it is easy to check that $w(T_1)=w(T_2)$,
and that for any other 2-division $P$ of $T$ into subsets $C_1$, $C_2$, we always have that one subset $C_i$ is  contained in the slab  determining the width of $T_j$, for some $j\in\{1,2\}$.
This implies that
$$\widetilde{w}(P)=\min\{w(C_1), w(C_2)\}\leq w(C_i)\leq w(T_j)=\widetilde{w}(P_T),$$
which yields the optimality of $P_T$.
Now, call $p$ to the intersection point of the considered bisector of the angle at $v_2$  and the opposite side $l_2$, and let $q$ be the point from $l_3$ at the same height of $p$, see Figure~\ref{fig:isosceles2}.
\begin{figure}[ht]
    \centering
    \includegraphics[width=0.38\textwidth]{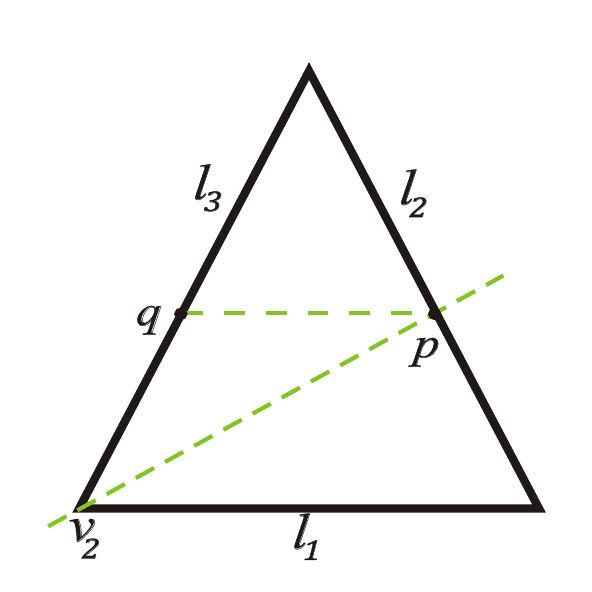}
    \caption{Any 2-division of $T$ given by a segment joining $p$ and a point of $\overline{q\,v_2}$ is optimal}
    \label{fig:isosceles2}
\end{figure}
Then, any 2-division $Q$ of $T$ given by a segment joining $p$ with a point of the segment $\overline{q\,v_2}$ is also optimal, since $w(Q)$ will coincide with $w(P_T)$ by construction.}
\end{example}

We will now focus on obtaining lower and upper bounds for the optimal value for this problem. 

\begin{theorem}
\label{prop:Mmwbound1}
Let $C$ be a convex body in $\rr^d$. 
Then,
\begin{equation}\label{eq:Mmwbound1}
\frac{w(C)}{n}\leq \widetilde{w}_n(C) \leq \min\bigg\{w(C),\frac{D(C)}{2}\bigg\}.
\end{equation}
\end{theorem}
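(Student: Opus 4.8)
The plan is to establish the three inequalities separately. For the lower bound $\widetilde w_n(C)\geq w(C)/n$, I would simply exhibit the explicit $n$-division $P_0$ already constructed in the proof of Theorem~\ref{prop:mMwov}: take a slab $B$ realizing $w(C)$ and slice $C$ with $n-1$ hyperplanes parallel to the boundary of $B$, equally spaced inside $B$. Each of the $n$ resulting slices sits inside a sub-slab of width $w(C)/n$, and one checks directly that each slice has width \emph{exactly} $w(C)/n$ (the two supporting hyperplanes of $C$ bounding $B$ are inherited by the extreme slices, and the interior slices are bounded by the cutting hyperplanes themselves). Hence $\widetilde w(P_0)=w(C)/n$, giving $\widetilde w_n(C)\geq w(C)/n$.

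For the first part of the upper bound, $\widetilde w_n(C)\leq w(C)$, I would note that any $n$-division $P$ has every subset $C_i\subseteq C$, so $w(C_i)\leq w(C)$ by monotonicity of width under inclusion (a supporting slab of $C$ is a slab containing $C_i$). Therefore $\widetilde w(P)=\min_i w(C_i)\leq w(C)$ for every $P$, and taking the supremum yields the claim.

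The substantive step is the bound $\widetilde w_n(C)\leq D(C)/2$. Since it suffices to treat $n=2$ (a finer division can only decrease $\widetilde w$, by the argument in Lemma~\ref{le:cotatrivial} applied to $\widetilde w$, or directly: refining one subset of a $2$-division into more pieces does not raise the minimum width), the goal is: for \emph{every} hyperplane cut producing subsets $C_1,C_2$, one has $\min\{w(C_1),w(C_2)\}\leq D(C)/2$. The idea is that a subset of large width must also be ``long'' in the cutting direction, and two such subsets cannot both fit in a body of diameter $D(C)$. More precisely, let $H$ be the cutting hyperplane with unit normal $u$, and suppose for contradiction that both $w(C_1)>D(C)/2$ and $w(C_2)>D(C)/2$. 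The width of $C_i$ in the direction $u$ is at least $w(C_i)>D(C)/2$ (width in any particular direction is $\geq$ minimal width). Since $C_1$ and $C_2$ lie on opposite sides of $H$ and each has extent more than $D(C)/2$ in the direction $u$ perpendicular to $H$, picking a point of $C_1$ farthest from $H$ on one side and a point of $C_2$ farthest from $H$ on the other side gives two points of $C$ whose separation in the $u$-direction exceeds $D(C)$, contradicting $\text{diam}(C)=D(C)$. The main obstacle I anticipate is being careful about what ``width in direction $u$'' means for each $C_i$ and confirming it is bounded below by the minimal width $w(C_i)$: the minimal width is the infimum over directions of the width, so width in the specific direction $u$ is indeed $\geq w(C_i)$, which is exactly what the argument needs. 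One should also verify the reduction to $n=2$ carefully, but this is routine given Lemma~\ref{le:cotatrivial}.

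Combining the three inequalities gives \eqref{eq:Mmwbound1}. Sharpness (claimed in the theorem statement but, I expect, discussed in a remark or example afterwards) would be witnessed by a long thin box for the lower bound and by a ball — where $w(C)=D(C)/2 \cdot 2 = D(C)$ forces the minimum to be attained — or an appropriate near-equilateral example for the upper bound; I would defer the explicit extremal configurations to the accompanying remarks rather than include them in this proof.
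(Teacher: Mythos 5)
Your treatment of the two upper bounds is essentially the paper's argument. The bound $\widetilde{w}_n(C)\leq w(C)$ is immediate from monotonicity of the width (and is already in Lemma~\ref{le:cotatrivial}), and your bound $\widetilde{w}_n(C)\leq D(C)/2$ — reduce to a single cut via Remark~\ref{re:dos}, then observe that two subsets on opposite sides of a hyperplane $H$, each of width exceeding $D(C)/2$, would contain two points separated by more than $D(C)$ in the direction normal to $H$ — is the same computation the paper carries out with the two tangent hyperplanes parallel to $H$ and the slabs they bound; you merely phrase it as a contradiction. Your reduction to $n=2$ by coarsening is also correct.

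The gap is in the lower bound. You assert that ``one checks directly'' that each of the $n$ equispaced parallel slices of the width slab has width exactly $w(C)/n$, but your parenthetical justification only identifies the hyperplanes bounding each slice. That establishes that the width of each slice \emph{in the direction normal to the slab} equals $w(C)/n$; it does not rule out that some slice is strictly thinner in a \emph{different} direction, and that is precisely the content of the inequality $w(C_i)\geq w(C)/n$ that the lower bound needs. This is not an elementary verification: it is an instance of Bang's plank theorem. (It does follow from Lemma~\ref{le:bang}: if some slice $C_j$ had $w(C_j)<w(C)/n$, then replacing its sub-slab by a slab of width $w(C_j)$ containing $C_j$ would cover $C$ by $n$ slabs of total width strictly less than $w(C)$, a contradiction.) The paper avoids analyzing the explicit equispaced division altogether: it takes a \emph{balanced} optimal $n$-division $P$, whose existence is guaranteed by Theorem~\ref{prop:Mmwexistence}, covers $C$ by the $n$ slabs realizing the widths of its pieces, and applies Lemma~\ref{le:bang} to obtain $w(C)\leq\sum_{i=1}^n w(C_i)=n\,\widetilde{w}_n(C)$. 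Either route is fine, but in yours the decisive step is asserted rather than proved; you must invoke Bang's lemma (or an equivalent) to close it.
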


\begin{proof}
Let $P$ be a balanced optimal $n$-division of $C$ into subsets $C_1,\ldots, C_n$, by using  Theorem~\ref{prop:Mmwexistence}.
By considering the slab $B_i$ which determines  $w(C_i)$,  $i=1,\ldots,n$, and applying Lemma~\ref{le:bang}, it follows that $$w(C)\leq \sum_{i=1}^n w(B_i)=\sum_{i=1}^n w(C_i)=n\, \widetilde{w}_n(C),$$
which gives left-hand inequality in \eqref{eq:Mmwbound1}.
On the other hand, 
let $H$ be one of the hyperplane cuts from $P$ dividing $C$ into two convex regions (see Remark~\ref{re:dos}).  Consider two hyperplanes $H_1$, $H_2$ parallel to $H$ and tangent to $\partial C$. Let $p_i$ be a point from $\ptl C\cap H_i$, and let $B_i$ be the slab delimited by $H$ and $H_i$, $i=1,2$. Observe that any subset $C_i$
will be contained in either $B_1$ or in $B_2$, so we can assume,   without loss of generality, that $C_i\subset B_i$, $i=1,2$. Then, denoting by $d$ to the Euclidean distance,
$$
D(C)\geq d(p_1,p_2)\geq w(B_1)+w(B_2)\geq w(C_1)+w(C_2)= 2\,\widetilde{w}(P)=2\,\widetilde{w}_n(C),
$$
and so $\widetilde{w}_n(C)\leq D(C)/2$, which together with $\widetilde{w}_n(C)\leq w(C)$ (see Lemma~\ref{le:cotatrivial}),  gives the right-hand side inequality in \eqref{eq:Mmwbound1}.
\end{proof}

If $n=2$, the equality in the left-hand side of \eqref{eq:Mmwbound1} is attained when $C$ is a constant-width body. Sharpness of the right-hand side can be proved in some cases, as shown in Example~\ref{re:234}.


\begin{example}
\label{re:234}
\textup{
Let $C$ be a (sufficiently) long and narrow orthotope in $\rr^d$. We can construct a balanced division $P$ of $C$, by using proper parallel lines, such that the width of all the subsets given by $P$ equals $w(C)$, see Figure~\ref{fig:Mmwrectangle}.
\begin{figure}[ht]
    \centering
    \includegraphics[width=0.75\textwidth]{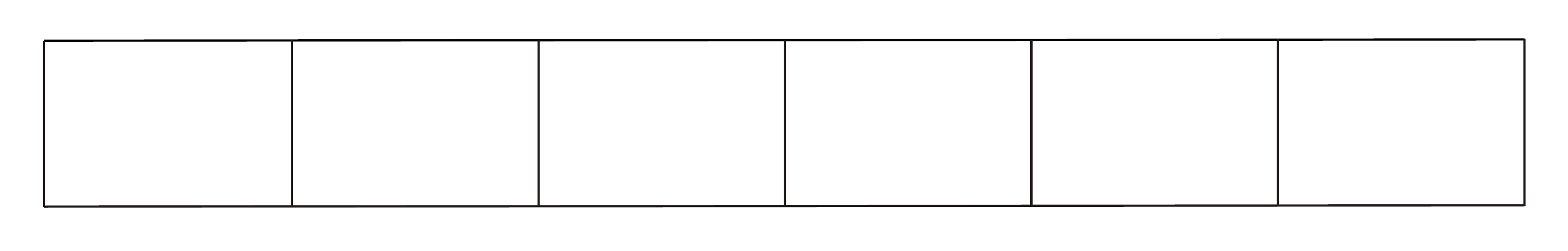}
    \caption{An optimal $6$-division of a long and narrow rectangle ($d=2$)}
    \label{fig:Mmwrectangle}
\end{figure}
Thus, Theorem~\ref{prop:Mmwbound1}  implies that $P$ is optimal and $\widetilde w_n(C)=w(C)=\min\{w(C),D(C)/2\}$. On the other hand, let $B$ be a planar ball of radius $r>0$, and let $P$ be one of the (balanced) $n$-divisions of $B$ from  Figure~\ref{fig:234}, for $n=2,\,3,\,4$. Then $w(P)=r$, which in particular gives that $P$ is optimal and $\widetilde{w}_n(B)=D(B)/2=\min\{w(B),D(B)/2\}$ (the same behaviour holds in general dimension).
\begin{figure}[ht]
    \centering
    \includegraphics[width=0.75\textwidth]{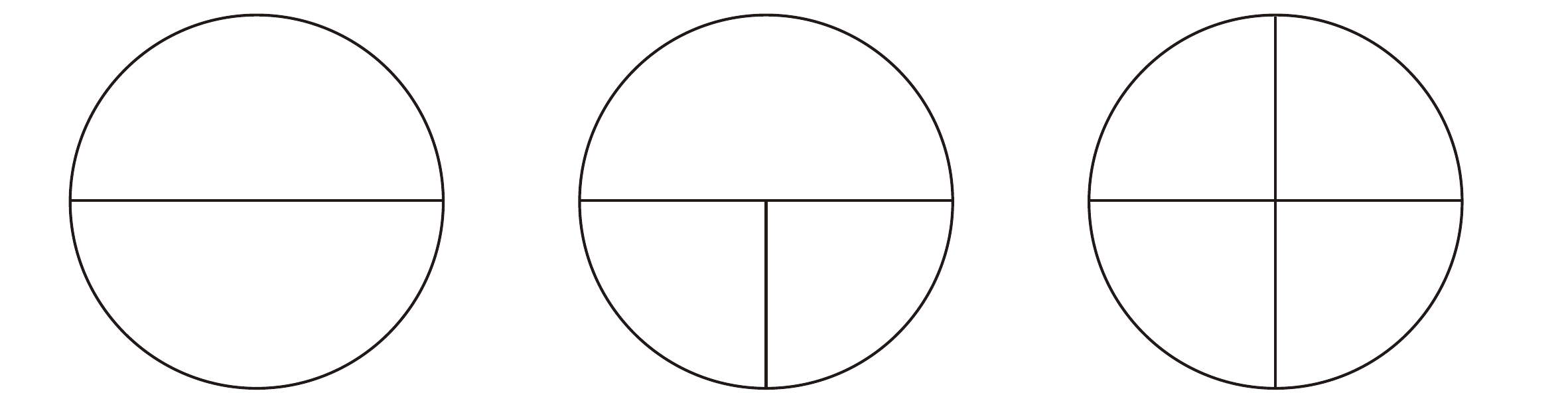}
    \caption{Optimal $n$-divisions of a planar ball for $n=2,3,4$}
    \label{fig:234}
\end{figure}
}
\end{example}



\subsection{Max-min problem for the inradius}
\label{subsec:Mmi}
The Max-min problem for the inradius shares several features with the Max-min problem for the width from Subsection~\ref{subsec:Mmw}: Theorem~\ref{prop:Mmiexistence} follows by using analogous techniques as in the width case.
Moreover,  the optimal value for this problem when considering divisions of a convex body $C$ into $n=2$ subsets can be expressed in terms of the width of a certain {\em rounded} subset of $C$ (see Theorem~\ref{prop:Mmiov}). We point out that several issues remain open for this problem, such as refining the bounds for the optimal value or deciding whether any optimal division is  balanced.


\begin{theorem}
\label{prop:Mmiexistence}
Let $C$ be a convex body in $\rr^d$. Then, there exists a balanced optimal $n$-division of $C$ for the Max-min problem for the inradius. Moreover,
\begin{equation}
\label{eq:Mmibound}
\widetilde{I}_n(C)\geq I(C)/n.
\end{equation}
\end{theorem}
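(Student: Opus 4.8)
The plan is to mirror the structure of the proof of Theorem~\ref{prop:Mmwexistence}, replacing the width by the inradius throughout. First I would establish the existence of an optimal $n$-division by a compactness argument: take a sequence $\{P_k\}$ of $n$-divisions of $C$ with $\lim_{k\to\infty}\widetilde{I}(P_k)=\widetilde{I}_n(C)$, apply the Blaschke selection theorem to extract convergent subsequences $C_i^k\to C_i^\infty$ for $i=1,\dots,n$, and observe that the inradius is continuous with respect to Hausdorff convergence. If some $C_i^\infty$ had empty interior, then $I(C_i^\infty)=0$, forcing $\widetilde{I}_n(C)=0$, which contradicts Lemma~\ref{le:cotatrivial} (note $\widetilde{F}_n(C)>0$ there for $F=I$). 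Hence $C_1^\infty,\dots,C_n^\infty$ form a genuine $n$-division $P^\infty$ with $\widetilde{I}(P^\infty)=\widetilde{I}_n(C)$, so $P^\infty$ is optimal.

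Next I would obtain the lower bound \eqref{eq:Mmibound}. Given a balanced optimal $n$-division $P$ of $C$ into $C_1,\dots,C_n$ (or indeed any optimal division), Lemma~\ref{le:kadets} by Kadets gives $I(C)\leq\sum_{i=1}^n I(C_i)\leq n\,\widetilde{I}(P)=n\,\widetilde{I}_n(C)$, whence $\widetilde{I}_n(C)\geq I(C)/n$. This step is immediate once Kadets' inequality is invoked, exactly as in Theorem~\ref{prop:desi}.

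The substantive part is producing a \emph{balanced} optimal $n$-division, which I would do by induction on $n$, following the scheme of Theorem~\ref{prop:Mmwexistence}. For the base case $n=2$, one needs an analogue of Lemma~\ref{le:parallel} for the inradius: if $P$ is a $2$-division of $C$ by a hyperplane $H$ into $C_1,C_2$ with $I(C_1)<I(C)$, and $H^t$ is parallel to $H$ at distance $t>0$ on the $C_1$-side with $C_1\subset C_1^t$, then $I(C_1)<I(C_1^t)$; this holds because an inball of $C_1$ witnessing $I(C_1)<I(C)$ cannot be tangent to the whole boundary of $C$ on the far side, so enlarging slightly past $H$ leaves room for a strictly larger inscribed ball (a routine continuity/compactness argument on the position and radius of inballs). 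Granting this monotonicity, if an optimal $2$-division $P$ with $I(C_1)<I(C_2)$ existed, we could slide $H$ toward $C_2$ to increase $I(C_1)$ while only enlarging $C_1$, contradicting optimality; so every optimal $2$-division is balanced. For the inductive step with $n>2$, pick a first hyperplane cut $H$ of $P$ splitting $C$ into convex regions $E_1,E_2$ with induced $n_i$-divisions $P_i$, $n_1+n_2=n$; by induction take balanced optimal $n_i$-divisions $P_i'$ of $E_i$, giving $\widetilde{I}_n(C)=\widetilde{I}(P)=\min\{\widetilde{I}(P_1),\widetilde{I}(P_2)\}\leq\min\{\widetilde{I}(P_1'),\widetilde{I}(P_2')\}=\widetilde{I}(P')$, so $P'$ is optimal too. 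If $\widetilde{I}(P_1')=\widetilde{I}(P_2')$ we are done; otherwise, slide $H$ toward the side with larger value and use continuity of $t\mapsto\widetilde{I}_{n_i}(E_i^t)$ (one side decreasing to $0$, the other not) to find $t_0>0$ with $\widetilde{I}_{n_1}(E_1^{t_0})=\widetilde{I}_{n_2}(E_2^{t_0})$; the balanced optimal $n_i$-divisions of $E_i^{t_0}$ then assemble into a balanced $n$-division $P^{t_0}$ which, by the inclusion $E_j\subset E_j^{t_0}$ on the shrinking side together with the chain above, is optimal.

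The main obstacle I anticipate is the inradius analogue of Lemma~\ref{le:parallel}: unlike the width, the inradius is not governed by supporting hyperplanes in one direction, so the monotonicity $I(C_1)<I(C_1^t)$ requires a genuine argument about how inscribed balls behave as the cutting hyperplane moves. The cleanest route is to argue by contradiction using compactness of the family of inballs — if $I(C_1^t)=I(C_1)$ for all small $t$, a limiting inball of $C_1$ must already be an inball of $C$ on that side, forcing $I(C_1)=I(C)$ — together with the observation that the boundary portion of $C_1^t$ coming from the moved hyperplane can always be ``pushed out'' slightly. The continuity and monotonicity of $t\mapsto\widetilde{I}_{n_i}(E_i^t)$ needed in the inductive step follows from the same Hausdorff-continuity and set-inclusion considerations used repeatedly above, so once the base-case monotonicity is secured the rest is a faithful transcription of the width argument.
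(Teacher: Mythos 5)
Your overall architecture coincides with the paper's: the paper proves existence by noting that the compactness argument of Theorem~\ref{prop:Mmwexistence} transfers verbatim to the inradius, obtains \eqref{eq:Mmibound} from Lemma~\ref{le:kadets} applied to a balanced optimal division, and produces a balanced optimal division by the same induction on $n$ that you describe. The one place where you genuinely diverge is the base case $n=2$, and that is also where your proposal has a gap. You want to prove that \emph{every} optimal $2$-division is balanced, and for that you need the strict monotonicity $I(C_1)<I(C_1^t)$ whenever $I(C_1)<I(C)$, i.e.\ an inradius analogue of Lemma~\ref{le:parallel}. You correctly flag this as the main obstacle, but what you offer is not a proof: ``a limiting inball of $C_1$ must already be an inball of $C$'' does not follow from $I(C_1^t)=I(C_1)$ for small $t$ without further argument, and the inradius is in general \emph{not} strictly monotone under strict inclusion of convex bodies (a unit disk inside a slightly elongated stadium has the same inradius), so the claim really does require the specific geometry of a moving hyperplane. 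The statement is in fact true --- one clean route is to write $I(C_1^t)=\max_{x}\min\{d(x,\partial C),\,d(x,H^t)\}$ and exploit concavity of $x\mapsto d(x,\partial C)$ together with affinity of $x\mapsto d(x,H^t)$, perturbing a maximizer towards the incenter of $C$ --- but as written your base case is not closed.

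The paper sidesteps this entirely, and you could too: to get a \emph{balanced} optimal $2$-division (which is all the theorem asks for) it suffices to slide $H$ from its initial position, where $I(C_1^0)<I(C_2^0)$, to a position $t_1$ where $I(C_1^{t_1})=I(C)>0=I(C_2^{t_1})$, apply the intermediate value theorem to find $t_0$ with $I(C_1^{t_0})=I(C_2^{t_0})$, and then use only the \emph{weak} monotonicity $C_1\subset C_1^{t_0}\Rightarrow I(C_1^{t_0})\geq I(C_1)=\widetilde{I}(P)$ to conclude that $P^{t_0}$ is still optimal. (The paper deliberately leaves open whether every optimal division is balanced for the inradius, so your stronger base case, if completed, would be a bonus rather than a needed step.) Finally, a genuine slip in the bound: the parenthetical ``(or indeed any optimal division)'' in the Kadets step is wrong. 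Since $\widetilde{I}(P)=\min_i I(C_i)$, the inequality $\sum_{i=1}^n I(C_i)\leq n\,\widetilde{I}(P)$ holds (as an equality) only for balanced divisions; for a general division the sum is bounded \emph{below}, not above, by $n\,\widetilde{I}(P)$. This is unlike Theorem~\ref{prop:desi}, where the maximum appears and any division works, so the balancedness established earlier in the proof is genuinely needed for \eqref{eq:Mmibound}.
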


\begin{proof}
For the existence of optimal divisions, it can be checked that the proof of Theorem~\ref{prop:Mmwexistence} still holds if we consider the inradius instead of the minimal width.
Let us see that we can always find a balanced optimal $n$-division by induction on the number $n$ of subsets.

For $n=2$, let $P$ be an optimal 2-division of $C$ into  subsets $C_1$, $C_2$,
which will be determined by a hyperplane $H$.
Assume that $P$ is not balanced, say $I(C_1)<I(C_2)$.
For each $t\geq0$, consider a hyperplane $H^t$ paralell to $H$ at distance $t$ from $C_1$, and let $P^t$ be the 2-division of $C$ into subsets $C_1^t$, $C_2^t$ determined by $H^t$. Since $I(C_1^0)=I(C_1)<I(C_2)=I(C_2^0)$, and $I(C_1^{t_1})=I(C)>0=I(C_2^{t_1})$, for certain large enough $t_1>0$,
it follows by continuity that there exists $t_0\in(0,t_1)$ such that $I(C_1^{t_0})=I(C_2^{t_0})$. Thus, the 2-division  $P^{t_0}$ of $C$ is balanced and satisfies that $\widetilde{I}(P^{t_0})=I(C_1^{t_0})\geq I(C_1)=\widetilde{I}(P)$, since $C_1\subset C_1^{t_0}$. This implies that $P^{t_0}$  is also optimal, as desired.
And for an arbitrary $n>2$, we can proceed as in the proof of Theorem~\ref{prop:Mmwexistence} in order to obtain a balanced optimal $n$-division of $C$.

Finally, let now $P$ be a balanced optimal $n$-division of $C$ into subsets $C_1,\ldots,C_n$.
Lemma~\ref{le:kadets} implies that
$$
I(C)\leq\sum_{i=1}^n I(C_i)= \sum_{i=1}^n \widetilde{I}_n(C)=n\,\widetilde{I}_n(C),
$$
which yields the statement.
\end{proof}

We will now focus on estimating the optimal value of a given convex body $C$ for this problem when considering divisions into $n=2$ subsets.
The two following technical results will lead us to Theorem~\ref{prop:Mmiov}, which establishes implicitly the optimal value for this problem when $n=2$, following the same spirit as in~\cite[Th.~1]{bb} for the  min-Max problem for the inradius, see Subsection~\ref{subsec:mMi}.

\begin{lemma}
\label{le:technical}
Let $C$ be a convex body in $\mathbb{R}^d$, and let $P$ be a 2-division of $C$ into subsets $C_1$, $C_2$. Assume that $C=C^\rho$, where  $\rho=I(C_1)$. Then,  $w(C_1)=2\rho$.
\end{lemma}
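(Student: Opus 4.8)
The plan is to show that the inball of $C_1$ of radius $\rho = I(C_1)$ must touch the dividing hyperplane $H$, and then use the hypothesis $C = C^\rho$ together with the structure of the rounded body to conclude that this ball is a "widest" ball in $C_1$ in the direction perpendicular to $H$. First I would set up coordinates so that $H = \{x_d = 0\}$ and $C_1 \subset \{x_d \leq 0\}$. Let $B$ be an inball of $C_1$, so $B$ has radius $\rho$ and $B \subset C_1 \subset C$. Since $C = C^\rho$, every point of $C$ lies in some ball of radius $\rho$ contained in $C$; I will use this to control how $C_1$ can extend transversally to $H$.

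The key step is to prove $w(C_1) \leq 2\rho$; the reverse inequality $w(C_1) \geq 2\rho$ is immediate since $C_1$ contains a ball of radius $\rho$. To get the upper bound, I would argue that $C_1$ is contained in the slab $\{-2\rho \leq x_d \leq 0\}$, which has width $2\rho$. Suppose not: then there is a point $p \in C_1$ with $x_d(p) < -2\rho$. Because $C = C^\rho$, the point $p$ lies in some ball $B_p \subset C$ of radius $\rho$; its center $c$ then satisfies $x_d(c) \leq x_d(p) + \rho < -\rho$, so $B_p \subset \{x_d < 0\}$ entirely, whence $B_p \subset C \cap \{x_d \leq 0\} = C_1$ (using that $H$ separates $C_1$ and $C_2$ and $C_1 = C \cap \{x_d \le 0\}$ up to the boundary hyperplane piece). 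But then I claim one can produce a ball of radius strictly larger than $\rho$ inside $C_1$, contradicting $\rho = I(C_1)$: the inball $B$ of $C_1$ touches $\partial C_1$, and if it does not touch $H$ one slides it toward larger radius using room created by $B_p$ being deep inside $\{x_d<0\}$; if it does touch $H$, then pushing $B$ slightly in the $-x_d$ direction and enlarging it stays inside $C_1$ because near $H$ the set $C_1$ coincides with $C$, which near that touching point contains balls of radius $\rho$ from both sides of $H$ by $C=C^\rho$. Either way $I(C_1) > \rho$, a contradiction.

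Thus $C_1 \subset \{-2\rho \leq x_d \leq 0\}$, giving $w(C_1) \leq 2\rho$, and combined with $w(C_1)\geq 2\rho$ we obtain $w(C_1) = 2\rho$ as claimed. The main obstacle I anticipate is making the "slide and enlarge" enlargement argument fully rigorous — precisely, showing that the deep interior ball $B_p$ forces enough transversal room near the contact set of the inball $B$ to strictly increase its radius. This is intuitively clear from the defining property $C = C^\rho$ (every boundary point of $C$, hence of $C_1$ along the part of $\partial C_1$ lying in $\partial C$, is witnessed by an inscribed $\rho$-ball), but it requires a careful case analysis according to whether the inball of $C_1$ touches $H$, touches $\partial C \cap \partial C_1$, or both, which is exactly the kind of geometric bookkeeping the rounded-body formalism of \cite{bb} is designed to handle. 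An alternative, possibly cleaner route is to invoke directly that $C_1 = (C_1)^\rho$ would follow from $C = C^\rho$ once $C_1$ is "$\rho$-thick on the $H$ side", and then apply the elementary fact that a planar-direction width of a $\rho$-rounded body equal to its inradius-realizing direction is $2\rho$; but I expect the self-contained separation-plus-enlargement argument above to be the safest to write out.
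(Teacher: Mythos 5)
There is a genuine gap: the central claim of your proof, that $C_1$ is contained in the slab $\{-2\rho\le x_d\le 0\}$ bounded by the cut $H$ and its parallel at distance $2\rho$, is false in general, and with it the whole strategy of bounding $w(C_1)$ by the extent of $C_1$ in the direction orthogonal to the cut. Take $d=2$ and let $C$ be a long stadium, i.e.\ the set of points at distance at most $\rho$ from a segment of length $2L$ with $L\gg\rho$, so that $C=C^\rho$; let $H$ be a line perpendicular to the axis of the stadium and let $C_1$ be the longer piece. Then $I(C_1)=\rho$ and the hypotheses of the lemma hold, but $C_1$ extends a distance about $L+\rho$ from $H$, far beyond $2\rho$. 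The conclusion $w(C_1)=2\rho$ is still true, because the minimal width is attained in the direction perpendicular to the \emph{axis}, not perpendicular to $H$. The same example kills the ``slide and enlarge'' step you flag as the main obstacle: a point $p$ on the axis at depth greater than $2\rho$ below $H$ has its witnessing ball $B_p$ entirely contained in $C_1$, yet $I(C_1)=\rho$ exactly; a $\rho$-ball sitting deep inside $C_1$ creates no room to inflate anything, since $C_1$ can be exactly $2\rho$ thick everywhere. The contradiction you are after simply does not exist.

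The direction realizing $w(C_1)=2\rho$ is not governed a priori by the normal of $H$; it is dictated by the contact set of an inball $B$ of $C_1$ with $\partial C_1$, and this is the route the paper takes. If $B$ meets $\partial C_1$ in exactly two points, they are antipodal and the tangent hyperplanes there give a slab of width $2\rho$ containing $C_1$. If it meets $\partial C_1$ in three or more points, at most one of the corresponding tangent hyperplanes can coincide with the cut $H$, and the hypothesis $C=C^\rho$ is applied to the remaining contact points (which lie on $\partial C$) to trap $C_1$ in a slab of width $2\rho$ determined by one of those tangent hyperplanes. Your argument would need to be reoriented along these lines: the hypothesis $C=C^\rho$ enters to control $\partial C_1\cap\partial C$ near the contact points of the inball, not to confine $C_1$ to a neighbourhood of $H$.
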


\begin{proof}
Recall that it is always true that
\begin{equation}
\label{eq:wi}
2\,I(E)\leq w(E),
\end{equation}
for any convex body $E$ (note that any inball of $E$ will be contained in any slab providing $w(E)$). Let $B$ be an inball of $C_1$, with radius $\rho$, which will necessarily touch $\ptl C_1$ in at least two points.
If $B$ touches $\partial C_1$ in just two points, then they must be antipodal in $B$,
and the tangent hyperplanes to $B$ at those points will determine a slab containing $C_1$, due to the convexity of $C_1$.
This gives $w(C_1)\leq 2\rho$, and so $w(C_1)=2\rho$, by using~\eqref{eq:wi}.  Otherwise, if there are (at least) three points in $B\cap \partial C_1$, it follows that $C_1$ will be contained in the region $R$ determined by the corresponding tangent hyperplanes to $B$, using again the convexity of $C_1$.
Call $H_i$ to these hyperplanes, $i=1,2,3$. Without loss of generality, we can assume that the  hyperplanes are not parallel by pairs (if  $H_i$ and $H_j$ were parallel, then two points from $B\cap\ptl C_1$ would be antipodal and we could proceed as previously). It is clear that at least two of the hyperplanes, say $H_1$, $H_2$, will not coincide with the hyperplane cut providing the division $P$. Then, since $C=C^\rho$, it follows that $C_1$ will be contained in the slab determined by $H_3$ and its parallel hyperplane at distance $2\rho$, and so $w(C_1)\leq 2\rho$, which yields again $w(C_1)=2\rho$ by using~\eqref{eq:wi}.
\end{proof}

\begin{lemma}
\label{co:technical}
Let $C$ be a convex body in $\rr^d$. Assume that  $C=C^\rho$, where $\rho=I_2(C)$.  
Then,  $w_2(C)=2\rho$. 
\end{lemma}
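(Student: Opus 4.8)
The plan is to prove the two inequalities $\widetilde{w}_2(C)\ge 2\rho$ and $\widetilde{w}_2(C)\le 2\rho$ separately, reducing in each case to Lemma~\ref{le:technical} applied to a suitably chosen optimal $2$-division of $C$. A preliminary observation used in both halves is the following monotonicity of rounded bodies: for $0\le\mu\le\rho$ one has $C^{\rho}\subseteq C^{\mu}\subseteq C$, so that the hypothesis $C=C^{\rho}$ forces $C=C^{\mu}$ for every such $\mu$. This will let me invoke Lemma~\ref{le:technical} even when I only control the inradius of a subset from above by $\rho$ rather than knowing it equals $\rho$.

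For the lower bound I would pick a balanced optimal $2$-division $P$ of $C$ for the Max-min problem for the inradius, which exists by Theorem~\ref{prop:Mmiexistence}; if $C_1,C_2$ are its subsets, then $I(C_1)=I(C_2)=\widetilde{I}_2(C)=\rho$. Since $C=C^{\rho}=C^{I(C_1)}$, Lemma~\ref{le:technical} gives $w(C_1)=2\rho$, and similarly $w(C_2)=2\rho$. Hence $\widetilde{w}(P)=\min\{w(C_1),w(C_2)\}=2\rho$, so $\widetilde{w}_2(C)\ge 2\rho$.

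For the upper bound I would pick a balanced optimal $2$-division $Q$ of $C$ for the Max-min problem for the width, which exists by Theorem~\ref{prop:Mmwexistence}; if $D_1,D_2$ are its subsets, then $w(D_1)=w(D_2)=\widetilde{w}_2(C)$. Since $Q$ is in particular a $2$-division of $C$, we have $\min\{I(D_1),I(D_2)\}\le\widetilde{I}_2(C)=\rho$; say $I(D_1)\le\rho$. By the monotonicity property above, $C=C^{I(D_1)}$, so Lemma~\ref{le:technical} applies to the $2$-division $(D_1,D_2)$ and yields $w(D_1)=2\,I(D_1)\le 2\rho$. Therefore $\widetilde{w}_2(C)=w(D_1)\le 2\rho$, and combining this with the lower bound gives $\widetilde{w}_2(C)=2\rho$.

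The step I expect to be the main obstacle is the upper bound: one must apply Lemma~\ref{le:technical} to a division about which only the one-sided estimate $I(D_1)\le\rho$ is available, and it is precisely the elementary remark $C=C^{\rho}\Rightarrow C=C^{I(D_1)}$ that makes this legitimate. Everything else is a routine unwinding of the definitions of balanced and optimal division together with the existence of balanced optimal divisions established earlier in this section.
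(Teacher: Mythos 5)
Your proof is correct and takes essentially the same route as the paper: the upper bound is obtained exactly as there (a balanced optimal Max-min width $2$-division, the observation that $I(D_i)\le\widetilde{I}_2(C)$ for some $i$, the monotonicity $C^{\rho}\subseteq C^{\mu}$ for $\mu\le\rho$, and an appeal to Lemma~\ref{le:technical}). The only cosmetic difference is in the lower bound, where the paper simply applies the elementary inequality $2I(E)\le w(E)$ to a balanced optimal inradius $2$-division instead of invoking Lemma~\ref{le:technical} a second time.
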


\begin{proof}
Let $P$ be a balanced optimal 2-division of $C$ into subsets $C_1$, $C_2$, in view of Theorem~\ref{prop:Mmwexistence}. By the definition of $\rho$, we will have that $I(C_i)\leq\rho$, for some $i\in\{1,2\}$. This implies that $C^\rho\subseteq C^{I(C_i)}$, due to the set inclusion property for rounded bodies.
As $C=C^\rho$ by hypothesis, it follows that $C^{I(C_i)}=C$.
Then, by using Lemma~\ref{le:technical}, we conclude that $\widetilde{w}_2(C)=\widetilde{w}(P)=w(C_i)=2\,I(C_i)\leq 2\rho$. On the other hand, the general inequality~\eqref{eq:wi} easily leads to $2\rho\leq\widetilde{w}_2(C)$ in our case, which completes the proof.
\end{proof}

\begin{theorem}
\label{prop:Mmiov}
Let $C$ be a convex body in $\rr^d$. 
Then, $\widetilde{I}_2(C)$ is the unique number $\widetilde{\rho}$ such that
\begin{equation}
\label{eq:Mmi2}
2\widetilde{\rho}=\widetilde{w}_2(C^{\widetilde{\rho}}).
\end{equation}
\end{theorem}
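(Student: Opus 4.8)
The plan is to mimic the strategy of Theorem~\ref{prop:bb} (Bezdek--Bezdek), replacing the min-Max width problem by the Max-min width problem and using the two technical lemmas just established. First I would fix $C$ and consider the function $g(\rho) := \widetilde{w}_2(C^\rho) - 2\rho$ on the interval $[0, I(C)]$. At $\rho = 0$ we have $C^0 = C$ and $\widetilde{w}_2(C) > 0$ (since $\widetilde{w}_2(C) \ge w(C)/2 > 0$ by Theorem~\ref{prop:Mmwbound1}, or simply by Lemma~\ref{le:cotatrivial}), so $g(0) > 0$. At $\rho = I(C)$, the body $C^{I(C)}$ is the set of incenters, which may have empty interior; in any case $\widetilde{w}_2(C^{I(C)}) \le w(C^{I(C)}) \le 2I(C^{I(C)}) + \text{(something small)}$ — more carefully, $\widetilde{w}_2(C^{I(C)}) \le w(C^{I(C)})$ and since $C^{I(C)}$ contains balls only of radius $I(C)$ trivially, one checks $w(C^{I(C)}) \le 2I(C)$ does not quite follow, so instead I would argue $\widetilde{w}_2(C^{I(C)}) \le w(C^{I(C)})$ and bound this; the cleanest route is to note that $g$ is continuous and strictly decreasing, so it has at most one zero, and then exhibit the zero directly. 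Continuity of $\rho \mapsto \widetilde{w}_2(C^\rho)$ follows from the continuity of $\rho \mapsto C^\rho$ in the Hausdorff metric together with continuity of $\widetilde{w}_2$ under Hausdorff convergence (the latter using the Blaschke-type stability already invoked in Theorem~\ref{prop:Mmwexistence}), and strict monotonicity: $\rho_1 < \rho_2$ implies $C^{\rho_2} \subseteq C^{\rho_1}$, hence $\widetilde{w}_2(C^{\rho_2}) \le \widetilde{w}_2(C^{\rho_1})$, so $g(\rho_2) - g(\rho_1) = (\widetilde{w}_2(C^{\rho_2}) - \widetilde{w}_2(C^{\rho_1})) - 2(\rho_2 - \rho_1) < 0$. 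Thus $g$ has at most one zero, establishing the uniqueness claim in the statement.

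For existence of the zero — equivalently, that $\widetilde{\rho} := \widetilde{I}_2(C)$ satisfies $2\widetilde{\rho} = \widetilde{w}_2(C^{\widetilde{\rho}})$ — I would proceed in two inequalities. Set $\rho = \widetilde{I}_2(C)$. By Theorem~\ref{prop:Mmiexistence} there is a balanced optimal $2$-division $P$ of $C$ into $C_1, C_2$ with $I(C_1) = I(C_2) = \rho$. Each $C_i$ contains a ball of radius $\rho$, hence each $C_i \subseteq C^\rho$ (a ball of radius $\rho$ inside $C_i \subseteq C$ is a ball of radius $\rho$ inside $C$). But $C_1, C_2$ also give a $2$-division of $C^\rho$ — here one must be slightly careful: $P$ divides $C$ by a hyperplane $H$, and $H$ also divides $C^\rho$ into two convex pieces $C_1^\rho := C_1 \cap C^\rho$ and $C_2^\rho := C_2 \cap C^\rho$ wait — more directly, $H \cap C^\rho$ splits $C^\rho$, and the resulting pieces contain $C_1, C_2$ respectively, so $\widetilde{w}_2(C^\rho) \ge \min\{w(C_1 \cap C^\rho), w(C_2 \cap C^\rho)\} \ge \min\{w(C_1), w(C_2)\}$. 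Now I would like $w(C_i) = 2\rho$; this is exactly the content of Lemma~\ref{le:technical} once we know $C^\rho = (C^\rho)^\rho$ (which holds since $\rho \le I(C^\rho)$ is forced — indeed $C^\rho$ contains a ball of radius $\rho$). Applying Lemma~\ref{le:technical} with ambient body $C^\rho$ and the division induced by $H$ gives $w(C_i \cap C^\rho) = 2\rho$ for the relevant $i$, whence $\widetilde{w}_2(C^\rho) \ge 2\rho$.

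For the reverse inequality $\widetilde{w}_2(C^\rho) \le 2\rho$, I would invoke Lemma~\ref{co:technical} applied to the body $C^\rho$: one needs $C^\rho = (C^\rho)^{\rho'}$ where $\rho' = I_2(C^\rho)$. The natural claim is $I_2(C^\rho) = \rho$, which should follow because $C^\rho$ admits a balanced optimal $2$-division into pieces each containing a ball of radius $\rho$ (push the division above through), giving $I_2(C^\rho) \ge \rho$, wait — $I_2$ is the min-Max inradius, so $I_2(C^\rho) \le \rho$ from the division into $C_1 \cap C^\rho, C_2 \cap C^\rho$ each of inradius $\le \rho$; and $I_2(C^\rho) \ge I(C^\rho)/2 \ge \rho/2$ is too weak, so instead use that any $2$-division of $C^\rho$ has a piece of inradius $\ge$ (half the inradius is not enough) — actually the correct lower bound $I_2(C^\rho) \ge \rho$ follows since $C^\rho$ contains a ball $B$ of radius $\rho$ whose center, after any hyperplane cut, lies in one piece, and that piece then contains a half-ball, giving inradius $\ge \rho/2$; this is not enough. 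The main obstacle is precisely pinning down $I_2(C^\rho) = \rho$ and $C^\rho = (C^\rho)^\rho$ cleanly; I expect the resolution is that $\rho \le I(C^\rho)$ always (so rounding $C^\rho$ at radius $\rho$ changes nothing) and that $I_2(C^\rho) = \rho$ reduces, via Bezdek--Bezdek's characterization (Theorem~\ref{prop:bb}) or a direct covering argument with Lemma~\ref{le:kadets}, to the equality case. Once $C^\rho = (C^\rho)^{I_2(C^\rho)}$ is secured, Lemma~\ref{co:technical} yields $\widetilde{w}_2(C^\rho) = 2\,I_2(C^\rho) \le 2\rho$, and combined with the previous paragraph, $\widetilde{w}_2(C^\rho) = 2\rho$, completing the proof.
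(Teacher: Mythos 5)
Your overall architecture matches the paper's: strict monotonicity of $\rho\mapsto \widetilde{w}_2(C^\rho)-2\rho$ gives uniqueness, and existence is reduced to showing that $\widetilde{\rho}:=\widetilde{I}_2(C)$ itself solves the equation, split into the two inequalities $\widetilde{w}_2(C^{\widetilde\rho})\geq 2\widetilde\rho$ and $\widetilde{w}_2(C^{\widetilde\rho})\leq 2\widetilde\rho$. The first inequality you do close correctly, after some false starts (note that $C_i\not\subseteq C^{\widetilde\rho}$ in general, and the induced pieces are $C_i\cap C^{\widetilde\rho}\subseteq C_i$, not the reverse, so your intermediate chain $\min_i w(C_i\cap C^{\widetilde\rho})\geq\min_i w(C_i)$ points the wrong way): what saves you is the direct application of Lemma~\ref{le:technical} inside the ambient body $C^{\widetilde\rho}$, whose induced pieces have inradius exactly $\widetilde\rho$, giving $w(C_i\cap C^{\widetilde\rho})=2\widetilde\rho$ and hence the lower bound. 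The paper instead passes through $2\widetilde\rho\leq 2\widetilde{I}_2(C^{\widetilde\rho})\leq\widetilde{w}_2(C^{\widetilde\rho})$; your route is equally valid.

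The genuine gap is in the second inequality, and you flag it yourself. To invoke Lemma~\ref{co:technical} for $C^{\widetilde\rho}$ you must verify that $C^{\widetilde\rho}$ coincides with its own rounded body at radius $\widetilde{I}_2(C^{\widetilde\rho})$ (the lemma, as its proof makes clear, concerns the Max-min quantities; your wavering between $I_2$ and $\widetilde{I}_2$ compounds the confusion). You aim at the exact equality $\widetilde{I}_2(C^{\widetilde\rho})=\widetilde\rho$ and propose falling back on Theorem~\ref{prop:bb} or a covering argument with Lemma~\ref{le:kadets} --- neither of which is needed, and neither of which you actually carry out. The only missing fact is the one-sided bound $\widetilde{I}_2(C^{\widetilde\rho})\leq\widetilde{I}_2(C)=\widetilde\rho$, which follows from monotonicity of $\widetilde{I}_2$ under inclusion: any hyperplane dividing $C^{\widetilde\rho}$ also divides $C$ into two larger pieces, so the Max-min value cannot increase when passing to the subset $C^{\widetilde\rho}\subseteq C$. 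Since moreover $\widetilde\rho\leq I(C^{\widetilde\rho})$ and $C^{\widetilde\rho}$ is a union of balls of radius $\widetilde\rho$, rounding it at any radius at most $\widetilde\rho$ changes nothing; hence the hypothesis of Lemma~\ref{co:technical} holds and yields $\widetilde{w}_2(C^{\widetilde\rho})=2\,\widetilde{I}_2(C^{\widetilde\rho})\leq 2\widetilde\rho$. Without this observation your upper bound does not close.
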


\begin{proof}
First of all, note that $\widetilde{w}_2(C^\rho)$ decreases when $\rho$ increases from zero to $I(C)$,
due to the set inclusion property for rounded bodies, and moreover,
for $\rho_1=0$ we will have $2\rho_1=0<\widetilde{w}_2(C)=\widetilde{w}_2(C^{\rho_1})$,
while for $\rho_2=I(C)$ it follows that $2\rho_2=w(C^{\rho_2})\geq \widetilde{w}_2(C^{\rho_2})$.
This implies, by continuity,  the existence of a number $\widetilde{\rho}\in(0,I(C)]$ satisfying~\eqref{eq:Mmi2}.
Furthermore, if $\rho_1$, $\rho_2$ are two positive numbers  satisfying~\eqref{eq:Mmi2},
with $\rho_1<\rho_2$, then $\widetilde{w}_2(C^{\rho_2})\leq \widetilde{w}_2(C^{\rho_1})$ from the previous argument, but we also have that
$$
\widetilde{w}_2(C^{\rho_2})=2\rho_2>2\rho_1=\widetilde{w}_2(C^{\rho_1}),
$$
which is contradictory. Therefore, the uniqueness from the statement holds.

We will now check that $\widetilde{I}_2(C)$ satisfies~\eqref{eq:Mmi2}. Call $\tau:=\widetilde{I}_2(C)$ for simplicity.
On the one hand, observe that any balanced optimal 2-division $P$ of $C$ for the inradius
induces a 2-division $P'$ of $C^\tau$ with $\widetilde{I}(P')=\widetilde{I}(P)=\tau$, 
and so $2\tau\leq 2 \widetilde{I}_2(C^\tau)\leq \widetilde{w}_2(C^\tau)$,
taking into account the definition of $\widetilde{I}_2(C^\tau)$ and~\eqref{eq:wi}.
On the other hand, $\widetilde{I}_2(C^\tau)\leq \tau$ since $C^\tau\subseteq C$,
and so $C^\tau$ coincides with its associated  $\widetilde{I}_2(C^\tau)$-rounded body.
Then, by applying Lemma~\ref{co:technical} to $C^\tau$,
we have  that $\widetilde{w}_2(C^\tau)=2 \widetilde{I}_2(C^\tau)\leq 2\tau$.
Both inequalities imply  that $\tau$ satisfies~\eqref{eq:Mmi2}, completing the proof.
\end{proof}

It seems difficult to obtain a similar result to Theorem~\ref{prop:Mmiov} for divisions into more than two subsets,
since Lemma~\ref{le:technical} does not hold in general,
as the following example illustrates.


\begin{example}
Let $\mathcal{T}$ be an equilateral triangle, and let $E$ be the trapezoid constructed with three copies of  $\mathcal{T}$ by joining two pairs of sides. Consider now the $\rho$-rounded body $E^\rho$ associated to $E$, for $\rho=I(\mathcal{T})$, and let $P$ be the 3-division of $E^\rho$ determined by the sides of the underlying equilateral triangles, see Figure~\ref{fig:rounded}. Call $C_1, C_2, C_3$ to the subsets of $E^\rho$ provided by $P$, which satisfy that $I(C_i)=I(\mathcal{T})$, $i=1,2,3$.  In this setting, it is clear that $E^\rho$ coincides with its associated $I(C_i)$-rounded body, and so the hypothesis from  Lemma~\ref{le:technical} holds, for $i=1,2,3$. However, it can be checked that there is one subset whose width is not equal to $2\rho$ (in fact, $P$ is not balanced for the width in this case).
\end{example}

\begin{figure}[ht]
    \centering
    \includegraphics[width=0.65\textwidth]{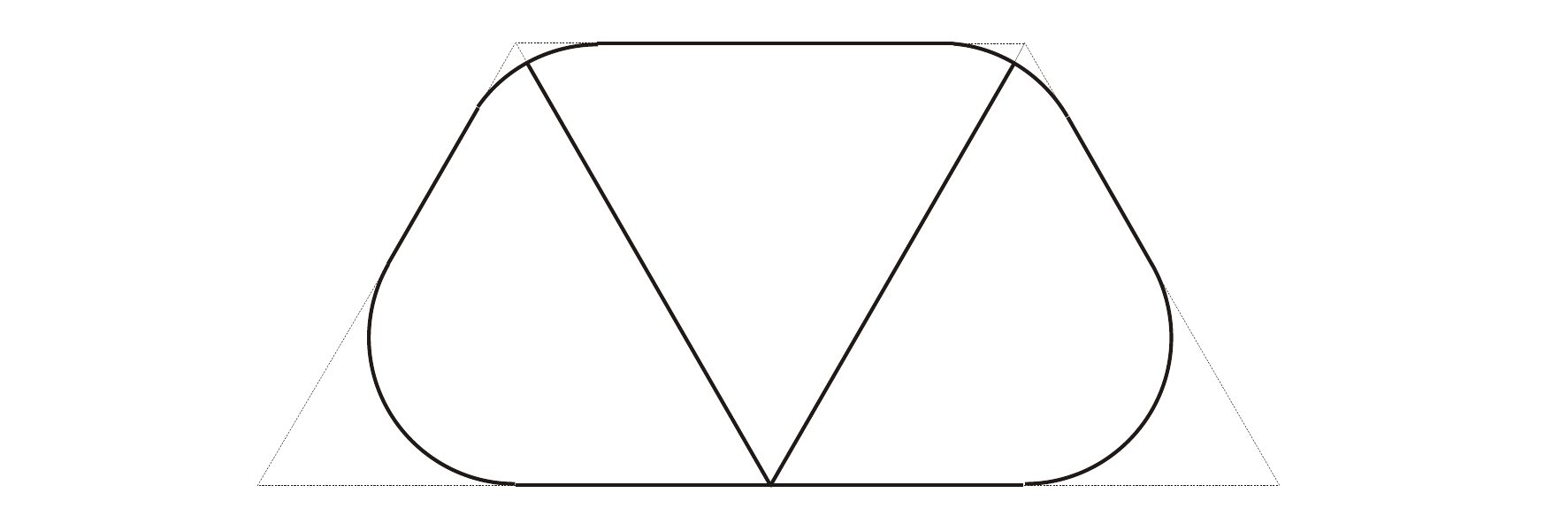}
    \caption{Lemma~\ref{le:technical} does not hold for this 3-division  of $E^\rho$}
    \label{fig:rounded}
\end{figure}

\section{Related problems}
\label{sec:final}

Apart from the questions which have not been completely solved in the previous Sections, other magnitudes $F$ can be considered in  the min-Max and Max-min problems, as the \emph{circumradius} (which represents the smallest radius of a ball containing the original convex body) and the {\em perimeter}. In this first case, it can be proved for instance that not all optimal divisions for the Max-min problem are balanced. The second case, with the additional restriction that the subsets of the  divisions enclose a prescribed quantity of volume, is related to the isoperimetric tilings problem~\cite[Problem~C15]{cfg}.

A nice variant of the problems treated in this work is described in~\cite[Remark~3]{bb}: for an $n$-division $P$ of a convex body $C$ into subsets $C_1,\ldots,C_n$, we can consider the quantity $\displaystyle{\sum_{i=1}^n F(C_i)}$, where $F$ is one fixed geometric magnitude, and then determine the $n$-division of $C$  minimizing (or maximizing) that quantity, as well as the corresponding optimal value.   Lemma~\ref{le:bang} (by T.~Bang) and Lemma~\ref{le:kadets} (by V.~Kadets) provide lower bounds for the optimal values in the case of the minimal width and the inradius, respectively.

Finally, a possible generalization of our work can be posed by considering general divisions, not necessarily determined by hyperplane cuts. In that case, the subsets provided by those divisions may not be convex, yielding more complicated situations.

\section*{Acknowledgements}
The first author is supported by  MICINN project PID2020-118180GB-I00, and by Junta de Andaluc\'ia grant FQM-325. The second author is supported by MICINN/ FEDER project    PID2020-118137GB-I00. The third author is supported by MICINN projects  PID2019-103900GB-I00 and PID2019-104129GB-I00/AEI/10.13039/501100011033.

\bibliographystyle{plain}

\bibliography{borsuk}

\end{document}